\title{Renewal structure of the tree builder random walk}
\author{Rodrigo Ribeiro}
\address{University of Denver, Colorado, USA.}
\email{rodrigo.ribeiro@du.edu}
\date{July 2023}
\numberwithin{equation}{section}
\newtheorem{theorem}{Theorem}
\newtheorem{lemma}{Lemma}
\newtheorem{proposition}{Proposition}
\newtheorem*{remark}{Remark}
\newtheorem{claim}{Claim}
\newtheorem*{lemma*}{Lemma}
\newenvironment{claimproof}[1][Proof of Claim]{%
  \proof[#1]\mbox{}\\*}{\endproof}
\newcommand{\pspace}{\mathcal{P}(\mathbb{N})}
\newcommand{\dtv}[2]{\| #1-#2\|_{TV}}
\newcommand{\dist}[2]{{\rm dist}(#1,#2)}
\newcommand{\degr}[2]{{\rm deg}(#2)}
\newcommand{\renew}[1]{\tau_{#1}}
\newcommand{\lstop}{\eta}
\newcommand{\rtime}{\widetilde{H}}
\newcommand{\El}[1]{E_{#1}}
\newcommand{\Pl}[1]{P_{#1}}
\newcommand{\cPl}{\mathbb{P}}
\newcommand{\Lbrace}{\left \lbrace}
\newcommand{\Rbrace}{\right \rbrace}
\begin{document}

\begin{abstract} In this paper, we study a class of random walks that build their own tree. At each step, the walker attaches a random number of leaves to its current position. The model can be seen as a subclass of the Random Walk in Changing Environments (RWCE) introduced by G. Amir, I. Benjamini, O. Gurel-Gurevich and G. Kozma. We develop a renewal framework for the process analogous to that established by A-S. Sznitman and M. Zerner in the context of RWRE. This provides a more robust foundation for analyzing the model. As a result of our renewal framework, we estabilish several limit theorems for the walker's distance, which include the Strong Law of Large Numbers (SLLN), the Law of the Iterated Logarithm (LIL), and the Invariance Principle, under an i.i.d. hypothesis for the walker's leaf-adding mechanism. Further, we show that the limit speed defined by the SLLN is a continuous function over the space of probability distributions on $\mathbb{N}$.

\end{abstract}
\maketitle




\section{Introduction}

Recent years have seen work on random walks on graphs that change over time \cite{avena2016mixing,collevecchio2006,collevecchio2017speed,cotar2017edge,Huang_EvolvingSets,disertori2015transience,durrett2002once,kious2017phase,peres2015random}. In this paper, we consider a family of models in which at each step the walker attaches a random number of vertices to its position.

This model and variants thereof have been studied in a series of recent papers \cite{EIR21,FIORR21,IRVZ22}. They are perhaps the simplest processes where the walker can radically alter the structure of the graph. This stands in contrast with other models, such as random walks in random and changing environments, where the structure of the graph may be random, and maybe even evolve over time, but in a much more constrained way. The model has also been studied as a model for a ``self-creating polymer'' see \cite{DS22} and \S \ref{sss:cw} below. 

Up to now, the so-called Tree Builder Random Walker (TBRW) has been studied via ad-hoc methods. The main technical contribution of this paper is a new result that shows that this model has an intrinsic renewal structure. A formal definition of this structure is given in \S \ref{s:renewalstructure}. For now we note that renewal type results naturally lead to many results, as is known from other models. In particular, the following new results are obtained in the present paper: Strong Law of Large Numbers, Law of the Iterated Logarithm, Central Limit Theorem and Invariance Principle for the distance of the walker from the root of its tree.

Under mild assumptions on the leaf-adding mechanism of the TBRW, the SLLN states that it moves away from the root at a well-defined linear speed. Whereas the other limit theorems provide a finer understanding of the fluctuations around this limiting speed. We also show that the limiting speed, when seen as a function from the space of probability distributions, is continuous.

To facilitate a comprehensive discussion of our results, we will first define the model in the following section.

\subsection{The Tree Builder Random Walk (TBRW)}
The Tree Builder Random Walk (TBRW) was introduced at \cite{cannings2013} with a restart feature, that is, the walker was restarted after adding a new vertex to the graph. Later, in \cite{iacobelli2016} an instance of it in which the walker adds a vertex after $s$ taking steps and without reseting its position was introduced. In \cite{IRVZ22}, the authors introduced the name TBRW and added an extra layer of randomness letting the walker to add a random number of vertices at each step.

The TBRW is a discrete-time stochastic process $\{(T_n, X_n)\}_{n\geq 0}$ where $T_n$ is a rooted tree and $X_n$, a vertex of the tree, is the position of the walker on $T_n$ at time~$n$. The dynamics of the TBRW model relies upon a sequence of random variables $\{\xi_n\}_{n \ge 0}$ supported on $\mathbb{N}$\footnote{Throughout this paper, we will consider $0$ as a natural number.}, called the {\it leaf process}. And an initial state $(T_0,x_0)$ where $T_0$ is a locally finite tree and $x_0$ is a vertex of $T_0$.

The model then evolves inductively, that is, we obtain $(T_n,X_n)$ from $(T_{n-1},X_{n-1})$ as follows: firstly, $T_n$ is obtained by adding $\xi_n$ leaves to $X_{n-1}$. In the case $\xi_n = 0$, $T_n = T_{n-1}$. Secondly, the random walk takes a step on $T_n$ by jumping to an uniformly chosen neighbor of $X_{n-1}$ in the possibly updated $T_n$.

Given that the TBRW represents a novel model in this degree of generality, we will lay the groundwork by delineating certain conventions and intricacies of the model. Subsequently, in Sections \ref{sss:graph}, \ref{sss:rwce}, and \ref{sss:cw}, we will discuss the various perspectives of the TBRW. In particular, its application to real-world phenomena and its connection to other models within the random walk literature.

\subsubsection{Notation and Conventions}\label{sss:notation}
In general, we will let $\Pl{T,x;\xi}$ and $\El{T,x;\xi}$ denote respectively the law and expectation with respect to a TBRW starting from the initial state~$(T,x)$, which will always be finite, and a leaf process $\xi = \{\xi_n\}_n$. However, in order to avoid clutter, we will use a few shorthand. Under the hypothesis that the leaf process $\xi = \{\xi_n\}_n$ is an i.i.d sequence following distribution $Q$, we will write $\Pl{T,x;Q}$ and $\El{T,x;Q}$. When $Q = {\rm Ber}(p)$, we will write 
$\Pl{T,x;p}$ and $\El{T,x;p}$. And under the i.i.d. setting with distribution $Q$, we will refer to the TBRW by $Q$-TBRW, and in the Bernoulli case, $p$-TBRW. This way we avoid having to mention that the leaf process is an i.i.d. process distributed as $Q$ or ${\rm Ber}(p)$.

For the particular initial state $\{(\{o,x\},x)\}$, that is, an edge with the walker starting at the non-root tip of it, we will omit the emphasis on the initial state, writing simply $\Pl{\xi}$ and $\El{\xi}$ for a general leaf process $\xi$. For the i.i.d. case, we will simply write $\Pl{Q}$ and $\El{Q}$ for general $Q$ and $\Pl{p}$ and $\El{p}$ for the Bernoulli case.

We will conceptualize our trees as growing downwards. While this may not reflect reality, it simplifies the drawing process and allows us to utilize terminology derived from genealogical trees. This way we can refer to the {\it bottom} (or base) of the tree which corresponds to the set of the furthest leaves in the tree. See Figure~\ref{fig:tbrw} below for a reference.
\begin{figure}[h]
    \centering
    \includegraphics[width=0.3\linewidth]{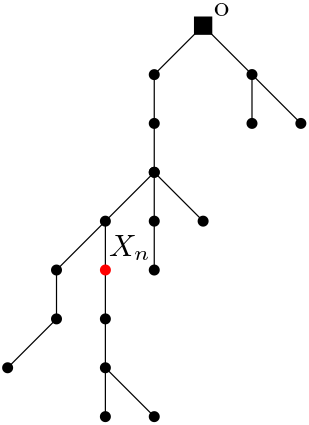}
    \caption{$(T_n,X_n)$ after $n$ steps. Walker's position is in red and the root $o$ is the square vertex.}
    \label{fig:tbrw}
\end{figure}

\subsubsection{The TBRW and Markov Property}\label{sss:markov}
Due to its dynamics, the $T$ and $X$ components of the TBRW are not Markov Chains when considered separately. However, when the sequence $\{\xi_n\}_n$ is independent, we have that the pair $\{(T_n,X_n)\}_n$ is a Markov chain on the space of rooted trees. Thus,  Markov properties hold for the process $\{(T_n,X_n)\}_n$. The remark below highlights how Markov properties work on the context of the TBRW. 
\begin{remark}[Markov Property] Consider a TBRW starting from $(T,x)$ and having an independent sequence $\xi = \{\xi_n\}_n$. For a fixed integer $m$, let $\xi^{(m)}$ be the new sequence $\{\xi_{m+n}\}_n$. Also let $\{\mathcal{F}_n\}_n$ denote the canonical filtration and $\theta_m$ the shift operator by $m$ steps. Then for all bounded, positive and measurable function $\psi$, the Simple Markov Property gives us that 
\begin{equation*}
    \El{T,x; \xi} \left[ \psi \circ \theta_m \; \middle | \; \mathcal{F}_m\right] = \El{T_m,X_m; \xi^{(m)}}[\psi],
\end{equation*}
$\Pl{T,x;\xi}$-almost surely. The important message here is that the sequence $\xi$ is shifted as well. However, if the leaf process is i.i.d., then shifting it has not effect on its distribution. Thus, in the i.i.d. setting, we will omit the shift superscript on the leaf process writing
\begin{equation*}
    \El{T,x; Q} \left[ \psi \circ \theta_m \; \middle | \; \mathcal{F}_m\right] = \El{T_m,X_m; Q}[\psi].
\end{equation*}
\end{remark}

\subsubsection{The TBRW as a Random Graph Model}\label{sss:graph} The TBRW is multifaceted and can be analyzed from several perspectives. In the present paper, we direct our attention to the $X$ (walker) component. However, a natural question arises concerning the graph structure of the $T$ (tree) component. In \cite{EIR21}, the authors explore this aspect, presenting results on the empirical degree distribution of $T_n$. Specifically, they showed that for an independent leaf process $\xi$ defined as $\xi_n = {\rm Ber}(n^{-\gamma})$, with the parameter $\gamma \in (2/3,1]$, the degree distribution exhibits a power-law behavior with an exponent of $3$. More formally, they show that for any $\gamma\in (2/3,1]$ and $d\in \mathbb{N}$
$$
\lim_{n \to \infty} \frac{\# \text{ of vertices with degree exactly }d \text{ in } T_n}{|T_n|} = \frac{4}{d(d+1)(d+2)},
$$
almost surely.

\subsubsection{The TBRW as a Random Walk in Changing Environments} \label{sss:rwce}
G. Amir, I. Benjamini, O. Gurel-Gurevich and G. Kozma introduced in \cite{Amir_Changing} the so-called Random Walk in Changing Environments (RWCE). In a nutshell, the walker walks on a fixed graph whose edges' conductances change over time according to some random rule that might depend on the history of the process and some extra randomness. 

In their notation, the pair $(X_t,G_t)$ denotes the position of the walker at time~$t$ and $G_t $ is a triple $G_t = (V,E,C_t)$, where $V$ is the vertex set, $E$ is the edge set and $C_t : E \to [0,\infty)$ gives the conductances at time $t$. The TBRW can be seen in this framework as a RWCE on an infinite tree where each vertex has a countably infinite number of neighbors.  However, except from a finite number of edges, all the edges have conductance set to zero. Then, the process of adding vertices can be seen as changing the conductances of some edges incidents to the walker's position from zero to one.

The authors then define the notion of monotone increasing (decreasing) RWCE, meaning that $C_{t+1}(e) \ge C_{t}(e)$ ( $C_{t+1}(e) \le C_t(e)$) for all $t$ and $e\in E$. Their work addresses transient RWCE on trees, but specifically under the assumption of monotone decreasing. In Theorem 5.2 in \cite{Amir_Changing}, they establish that monotone decreasing RWCEs, with an additional condition, exhibit transience. This, however, is not applicable to our context. When viewing the TBRW as an RWCE, it distinctly characterizes a monotone increasing RWCE.

\subsubsection{Theory of chain walking catalysis}\label{sss:cw} Models such as the TBRW are also applicable in chemistry. The theory of ``chain walking" in catalysis refers to a mechanism in polymerization reactions where a catalyst not only facilitates the addition of monomers to a growing polymer chain but also has the ability to walk along the chain it has created. This mechanism can result in the repositioning of the catalyst along the polymer backbone, allowing for the insertion of monomers at different points along the chain.

Note that the chain-walking mechanism can be naturally modeled by a random walk (representing the catalyst) with the capability to attach vertices (monomers) as it walks. In \cite{DS22}, the authors explore simulations of the CW (chain walk), which closely resembles the TBRW model. However, in their study, vertex degrees are at most $3$. Additionally, the walker (catalyst) flips a coin to decide whether to attach a new vertex (monomer) at its current position or to move to another position. We direct the reader to the Conclusion section in \cite{DS22}, where the authors discuss the similarities and differences between the two models in detail.

\subsubsection{The Speed and the Uniform Elliptic condition for the TBRW} Under some conditions, the TBRW is not only transient, but ballistic, which means it moves away from the root at linear speed. Ballistic behavior emerges under a sort of uniform elliptic condition over the leaf process. If $\xi$ is an leaf process, we say that the TBRW, or the leaf process, is uniformly elliptic if
\begin{equation}\tag{UE}\label{def:UE}
    \inf_{n\in \mathbb{N}}P(\xi_n \ge 1) = \kappa > 0.
\end{equation}
If $\xi = \{\xi_n\}_n$ is independent, then \eqref{def:UE} means that at each step, the TBRW has probability at least $\kappa$ of attaching at least one leaf to its position. This is similar in spirit to the uniform elliptic condition for RWRE which means that regardless the environment, the walker always has probability  at least $\kappa$ of jumping to any particular direction.

In \cite{IRVZ22} the authors showed that independence of the leaf process and \eqref{def:UE} combined lead to ballistic behavior. That is, if $\xi$ is an independent and uniformly elliptic leaf process, then 
$$
\liminf_{n \to \infty} \frac{\dist{X_n}{o}}{n} > 0, \; \Pl{T,x;\xi}\text{-almost surely,}
$$
for any initial state $(T,x)$, where $\dist{X_n}{o}$ is graph distance between $X_n$ and the root $o$.

In the i.i.d. setting, if $Q \in \pspace$ (the family of probability distributions over $\mathbb{N}$), then \eqref{def:UE} reduces to $Q(\{1,2,\dots, \}) = \kappa > 0$. The family of such distributions will be important to us, then, fixed $\kappa \in (0,1]$, we let $\mathcal{Q}_\kappa$ be the following
\begin{equation}\label{def:qkappa}
    \mathcal{Q}_\kappa := \Lbrace Q \in \pspace \; : \; Q({1,2,\dots}) \ge \kappa\Rbrace.
\end{equation}
In this work we will investigate the limit speed deeper in the i.i.d. setting for $Q \in \mathcal{Q}_\kappa$ for some $\kappa > 0$.

\subsection{Main Results: The Renewal Structure and Tail probabilities.} As mentioned earlier, the central aim of this work is to establish foundational grounds for the analysis of the TBRW. Our framework consists of a renewal structure that segments the process into i.i.d instances, together with a uniform (over the distributions of the leaf-processes) upper bound for the tail of the first renewal time. A combination of these two results permits us to derive several limit theorems concerning the walker's speed, which will be discussed in Section \ref{ss:limitthm}

Thus, let us define the sequence of renewal times. We begin setting $\renew{0} \equiv 0$ and defining $\renew{1}$ as follows
\begin{equation}\label{def:renew}
    \renew{1} = \inf \{ n> 0 : \degr{T_n}{X_n} = 1,  \dist{X_{s}}{o} < \dist{X_{n}}{o} \le \dist{X_t}{o}, \forall s<n, \forall t > n \},
\end{equation}
where $\degr{T_n}{X_n}$ denotes the degree of the walker's position at time $n$ in the tree $T_n$. In words, $\renew{1}$ is the first time $X$ reaches distance $\dist{X_{\renew{1}}}{o}$ from the root by stepping on leaf $x$ and never visits its {\it father} $f(x)$ again, where $f(x)$ is the neighbor of $x$ closer to the root $o$.
We then define $\renew{k}$ using the shift operator $\theta$
\begin{equation}\label{def:renewk}
    \renew{k} := \renew{1}\circ \theta_{\renew{k-1}} + \renew{k-1}.
\end{equation}
Notice that Theorems \ref{t:gtailrenew} and \ref{t:renew} below guarantee that $\renew{k}$ is finite for all $k$. Our first main result concerns the tail of $\renew{1}$.
\begin{theorem}[Uniform Tail bounds for $\renew{1}$]\label{t:gtailrenew} Fix $\kappa \in (0,1]$. Then, there exist positive constants $C$ and $C'$ depending on $\kappa$ only such that 
 \begin{equation*}
    \sup_{Q\in\mathcal{Q}_k}\Pl{Q} \left( \renew{1} > t \right) \le Ce^{-C't^{1/2}}, 
 \end{equation*}
  for all $t$.
\end{theorem}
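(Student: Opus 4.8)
The plan is to recognize $\renew{1}$ as the first first-passage time at which the walk escapes upward forever, and then to run a Sznitman--Zerner style ``attempt'' scheme.

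\emph{Reduction.} Set $D_n:=\dist{X_n}{o}$, $M_n:=\max_{k\le n}D_k$ and $L_j:=\inf\{n:D_n=j\}$ (a genuine first-passage time since the walk lives on a tree). If $n>0$ meets the conditions defining $\renew{1}$ then $D_s<D_n$ for all $s<n$, so $n=L_{D_n}$; moreover, since leaves are only ever attached to the walker's position, the vertex $X_{L_j}$---never visited before time $L_j$---has degree $1$ there, so the leaf requirement is automatic. Conversely, $L_j$ is a renewal time precisely when the walk never drops below level $j$ after $L_j$, i.e.\ never revisits $f(X_{L_j})$. Hence $\renew{1}=L_{k^\ast}$ with $k^\ast:=\min\{j\ge 1:\ L_j>0,\ \text{the walk never drops below level }j\text{ after }L_j\}$. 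I would then define stopping times $S_1<R_1<S_2<R_2<\cdots$ by: $S_1:=$ the first time $>0$ at which the walk sits on a record leaf; $R_j:=\inf\{n>S_j:D_n<D_{S_j}\}$; and, on $\{R_j<\infty\}$, $S_{j+1}:=L_{M_{R_j}+1}$. A short bookkeeping argument with the tree structure---every level first reached during $(S_j,R_j]$ is dropped below by time $R_j$, hence is not a renewal level---shows $\renew{1}=S_J$ with $J:=\min\{j:R_j=\infty\}$. With the convention that the $S_j$ become $+\infty$ once the scheme terminates, this gives, for every $n\in\mathbb N$, the master bound
\[
 \Pl{Q}\bigl(\renew{1}>t\bigr)\ \le\ \Pl{Q}\bigl(S_n>t\bigr)\ +\ \Pl{Q}\bigl(R_j<\infty\ \text{for all }j\le n\bigr).
\]

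\emph{The escape estimate (second term).} I would prove a uniform lower bound on the escape probability: there is $\delta=\delta(\kappa)>0$ such that for every finite rooted tree $T$, every leaf $x$ of $T$, and every $Q\in\mathcal{Q}_\kappa$,
\[
 \Pl{T,x;Q}\bigl(\text{the walk never visits }f(x)\bigr)\ \ge\ \delta .
\]
Two structural facts drive this: (i) whenever the walk sits on a leaf, uniform ellipticity attaches at least one new leaf with probability $\ge\kappa$, after which the walk steps to a freshly created child with conditional probability $\ge 1/2$, hence advances to a new fresh leaf with probability $\ge\kappa/2$; and (ii) a vertex's degree never decreases, so from a previously visited non-root vertex the walk steps to a child (forward) with probability $\ge 1/2$. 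These let one compare the distance process, below its running maximum, with a transient birth--death chain and run a standard recursion to produce a uniform $\delta$; one must only check that large leaf batches---allowed, since $\mathcal{Q}_\kappa$ contains heavy-tailed laws---cannot spoil it, which holds because a large batch only makes the unique backward neighbour less likely to be chosen. Given $\delta$, the strong Markov property at $S_n$ (the events $\{R_j<\infty\}$, $j<n$, are $\mathcal F_{S_n}$-measurable since $R_j<S_{j+1}\le S_n$, and conditionally on $\mathcal F_{S_j}$ the event $\{R_j=\infty\}$ is exactly ``the shifted walk never visits $f(X_{S_j})$'') yields, by induction on $n$, $\Pl{Q}(R_j<\infty\ \forall j\le n)\le (1-\delta)^{n}$.

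\emph{The time estimate (the main obstacle).} The heart of the matter is a uniform semi-exponential tail for the inter-attempt times: there exist $C,c>0$ depending only on $\kappa$ with
\[
 \Pl{Q}\bigl(S_{j+1}-S_j>s\ \big|\ \mathcal F_{S_j}\bigr)\ \le\ C\,e^{-c\sqrt{s}}\qquad\text{on }\{S_{j+1}<\infty\},
\]
and likewise for $S_1$. By the Markov property this reduces to bounding, started from a record leaf at level $\ell$, the time until the walk next reaches a level above its running maximum---the duration of a downward excursion into the already-built tree followed by a climb back. I would decompose such an excursion by the maximal depth $d$ it attains below level $\ell$: the effective upward drift from fact (ii) gives $\rho^{d}$-type control on $d$ for some $\rho=\rho(\kappa)<1$, while the excursion itself must be estimated depth by depth. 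I expect this to be the hard step, and to be exactly why only the exponent $1/2$ survives uniformly over $\mathcal{Q}_\kappa$: when $Q$ is close to $(1-\kappa)\delta_0+\kappa\delta_1$ the walk's level performs a nearly diffusive motion along the essentially bare backbone segments it has laid down, so an excursion of depth $d$ typically lasts of order $d^2$; combined with the exponential tail of $d$ this yields a tail of order $e^{-c\sqrt{s}}$. Making this rigorous \emph{and uniform} in $Q$---including heavy-tailed $Q$, where one step can attach $\asymp s$ leaves and build a high-degree ``bush'' the walk must thread through---is where the real work lies.

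\emph{Assembling.} Finally, take $n=\lfloor t^{1/2}\rfloor$. The escape estimate gives $(1-\delta)^n\le e^{-c_1 t^{1/2}}$. For $\Pl{Q}(S_n>t)$, write $S_n$ as a sum of $n$ increments, each conditionally stochastically dominated by a law with tail $\le C e^{-c\sqrt{s}}$; for sums of such semi-exponential variables a one-big-jump (equivalently, truncated Chernoff) bound gives $\Pl{Q}(S_n>t)\le n\,C e^{-c\sqrt{t}}+(\text{a genuinely exponentially small term})\le C'e^{-c' t^{1/2}}$. Adding the two estimates proves the theorem, with constants depending only on $\kappa$.
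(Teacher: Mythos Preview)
Your attempt scheme is essentially the paper's own decomposition, just re-indexed: the paper works with \emph{all} record-leaf times $\eta_k$ (your $L_{k+1}$) rather than only the post-failure ones $S_j$, and writes $\{\tau_1>t\}=\bigcup_{k\le t}\{\eta_{k-1}\le t,\ \max_{j<k}\tilde H_j<\infty,\ \eta_k>t\}$, then splits the sum at $k=\sqrt t$. The escape estimate is handled not by a birth--death comparison but by a coupling showing $\sup_{Q\in\mathcal Q_\kappa}P_Q(H_o<\infty)=P_\kappa(H_o<\infty)<1$ (transience of the $\kappa$-TBRW being imported from earlier work), together with a second coupling showing that extra structure below a leaf only lowers the return probability. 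Your fact (ii) as stated is not quite right---a previously visited vertex can still be a leaf---so a direct birth--death recursion does not go through without more care.

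The substantive misdiagnosis is in the time estimate. The walk is \emph{not} nearly diffusive, even when $Q=(1-\kappa)\delta_0+\kappa\delta_1$: the paper imports a coupling (from its reference on the Bernoulli case) of $\{\mathrm{dist}(X_{\sigma_k^{(r)}},o)\}_k$ with a right-biased simple random walk on $\mathbb Z$, valid uniformly over $Q\in\mathcal Q_\kappa$ with $r=r(\kappa)$. This yields a genuinely \emph{exponential} tail $P(\eta_k-\eta_{k-1}>t)\le e^{-Ct}$ for the one-level climbing time, uniformly in $Q$ and in the starting bottom-leaf. The exponent $1/2$ in the theorem therefore does \emph{not} come from a stretched-exponential inter-attempt tail; it arises purely from balancing the two union bounds: $P(\eta_k>t)\le k\,e^{-Ct/k}$ against $P(\text{first }k\text{ attempts fail})\le (k{+}1)\beta^{k+1}$, optimized at $k\asymp\sqrt t$. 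Your depth-decomposition heuristic (``excursion of depth $d$ takes time $d^2$'') would be the right picture for a walk with zero drift, but here the drift is uniformly positive, excursions of depth $d$ take time of order $d$, and the real work---which you would still have to do---is establishing that uniform ballisticity input.
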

Our second main result concerns the renewal structure itself. In the forthcoming statement and throughout this paper, we will need a notation for the hitting time to a specific vertex. Thus, given a vertex $x$, we let $H_x$ be 
\begin{equation}\label{def:hittime}
    H_x := \inf \{ n \ge 0 \; : \; X_n = x\}.
\end{equation}
With all the above notation in mind, we can state properly our renewal structure.
\begin{theorem}[The Renewal Structure]\label{t:renew} Let $Q$ be a probability distribution in $\mathcal{Q}_\kappa$ for some positive $\kappa$, and  $(T,x)$ a finite initial state. Then, the following sequence of random vectors $\{\left( \dist{X_{\renew{k}}}{o} - \dist{X_{\renew{k-1}}}{o},\renew{k}- \renew{k-1}\right)\}_{k\ge 1}$ is a sequence of independent random vectors. Moreover, for $k>1$,the random variables $\renew{k}-\renew{k-1}$ and $\dist{X_{\renew{k}}}{o} - \dist{X_{\renew{k-1}}}{o}$ are respectively distributed as~$\renew{1}$ and $\dist{X_{\renew{1}}}{o}$ conditioned on the event $\{H_o = \infty\}$ and started from $(\{o,x\},x)$. That is, for $k>1$,
$$
P_{T,x; Q}\left(\renew{k} - \renew{k-1} \in \cdot \right) = P_{Q}\left(\renew{1} \in \cdot  \, \middle | \, H_o = \infty \right)
$$
and
$$
P_{T,x; Q}\left(\dist{X_{\renew{k}}}{o} - \dist{X_{\renew{k-1}}}{o} \in \cdot \right) = P_{Q}\left(\dist{X_{\renew{1}}}{o} \in \cdot  \, \middle | \, H_o = \infty \right).
$$
\end{theorem}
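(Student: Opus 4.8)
The plan is to transplant the regeneration argument of Sznitman and Zerner for RWRE to the present setting. As there, the one genuine subtlety is that $\renew{1}$ is \emph{not} a stopping time — the condition $\dist{X_{\renew{1}}}{o}\le\dist{X_t}{o}$ for all $t>\renew{1}$ looks into the future — so the first task is to realise $\renew{1}$ as the value of an increasing sequence of honest stopping times sampled at a random, geometrically controlled index. Call a time $n>0$ a \emph{fresh leaf record} if $X_n$ is a leaf of $T_n$ strictly farther from $o$ than every earlier position, and let $\beta:=\inf\{n\ge1:\dist{X_n}{o}<\dist{X_0}{o}\}$ be the first strict backtrack time; both are stopping times, and $R:=$ (first fresh leaf record) is a.s.\ finite, as the walk is transient under \eqref{def:UE}. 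Set $R_1:=R$ and recursively let $R_{i+1}$ be the first fresh leaf record strictly after the first backtrack following $R_i$ (this makes sense on $\{\beta\circ\theta_{R_i}<\infty\}$). Then $\renew{1}=R_N$ with $N:=\inf\{i\ge1:\beta\circ\theta_{R_i}=\infty\}$, because every fresh leaf record lying strictly between $R_i$ and its subsequent backtrack also fails the future condition, so the first surviving candidate is $R_N$. Here $\{N\ge i\}$ is $\mathcal F_{R_i}$-measurable, while $\{N=i\}=\{N\ge i\}\cap\{\beta\circ\theta_{R_i}=\infty\}$ carries the extra future requirement. Theorem~\ref{t:gtailrenew}, together with $\inf_{Q\in\mathcal Q_\kappa}\Pl{Q}(H_o=\infty)>0$ (produced along the way), gives $N<\infty$ and hence $\renew{k}<\infty$ for all $k$, $\Pl{T,x;Q}$-a.s.

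The second ingredient is a \emph{restriction property} particular to the TBRW. If $(T',x')$ is any finite initial state in which $x'$ is a leaf, then for every bounded functional $g$ of the sequence of renewal increments,
\[
\El{T',x';Q}\!\left[g\cdot\mathbf{1}_{\{H_{f(x')}=\infty\}}\right]\;=\;\El{Q}\!\left[g\cdot\mathbf{1}_{\{H_o=\infty\}}\right]\;=:\;\Phi_0,
\]
a quantity that does not depend on $(T',x')$. The reason is that on $\{H_{f(x')}=\infty\}$ the walker never reaches the father of $x'$, hence never leaves the subtree hanging from $x'$; at time $0$ that subtree is the single vertex $x'$, so both $g$ and the indicator are measurable with respect to the evolution of this subtree, which is exactly that of a TBRW started from the edge $(\{o,x\},x)$, with $o$ playing the part of $f(x')$.

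Now the two pieces combine through a short telescoping computation over the decomposition $\El{T,x;Q}[f(Y_1)g]=\sum_{i\ge1}\El{T,x;Q}[f(Y_1)g\,\mathbf{1}_{\{N=i\}}]$, where $Y_1:=(\dist{X_{\renew{1}}}{o}-\dist{X_0}{o},\renew{1})$, $f$ is bounded, and $g$ is a bounded functional of $\{(\dist{X_{\renew{j}}}{o}-\dist{X_{\renew{j-1}}}{o},\renew{j}-\renew{j-1})\}_{j\ge2}$. On $\{N=i\}$ one has $\renew{1}=R_i$, so $Y_1$ there agrees with the $\mathcal F_{R_i}$-measurable vector $(\dist{X_{R_i}}{o}-\dist{X_0}{o},R_i)$ and the post-$\renew{1}$ increment sequence becomes the post-$R_i$ one; writing $\mathbf{1}_{\{N=i\}}=\mathbf{1}_{\{N\ge i\}}\mathbf{1}_{\{\beta\circ\theta_{R_i}=\infty\}}$ with $\{N\ge i\}\in\mathcal F_{R_i}$, the strong Markov property at $R_i$ and then the restriction property (the surviving indicator says the post-$R_i$ walk never visits $f(X_{R_i})$) give $\El{T,x;Q}[f(Y_1)g\,\mathbf{1}_{\{N=i\}}]=\Phi_0\,\El{T,x;Q}[f_i\,\mathbf{1}_{\{N\ge i\}}]$ with $f_i:=f(\dist{X_{R_i}}{o}-\dist{X_0}{o},R_i)$. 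Summing over $i$ and specialising to $g\equiv1$, so that $\Phi_0$ becomes $\Pl{Q}(H_o=\infty)$ and the left side collapses to $\El{T,x;Q}[f(Y_1)]$, evaluates $\sum_i\El{T,x;Q}[f_i\,\mathbf{1}_{\{N\ge i\}}]$ and delivers
\[
\El{T,x;Q}\!\left[f(Y_1)\,g\right]\;=\;\El{T,x;Q}\!\left[f(Y_1)\right]\cdot\El{Q}\!\left[\,g \mid H_o=\infty\,\right].
\]
Thus $Y_1$ is independent of the post-$\renew{1}$ increment sequence, the latter having the law of the increment sequence under $\Pl{Q}(\cdot\mid H_o=\infty)$. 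Re-running the same argument with $\Pl{Q}(\cdot\mid H_o=\infty)$ as the base measure — using that $\renew{1}<\infty$ forces $\dist{X_n}{o}\ge2$ for all $n\ge\renew{1}$, whence $\{H_o=\infty\}=\{X_s\ne o\text{ for all }s<\renew{1}\}$, an event that on each $\{N=i\}$ is $\mathcal F_{R_i}$-measurable and so can be absorbed into $f$ — and iterating over $k$ yields the asserted independence of $\{(\dist{X_{\renew{k}}}{o}-\dist{X_{\renew{k-1}}}{o},\renew{k}-\renew{k-1})\}_{k\ge1}$ and the stated laws for $k>1$, upon identifying the root $o$ of the edge-started chain with the father of $X_{\renew{k-1}}$.

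The part I expect to be the actual work is making Step~1 and the restriction property airtight: checking that the recursively defined $R_i$ really do give $\renew{1}=R_N$ with $\{N\ge i\}\in\mathcal F_{R_i}$ (in particular that intermediate fresh leaf records fail), and — the more delicate point — identifying precisely which functionals of the renewal increments depend on the post-renewal trajectory only through the subtree hanging from the renewal vertex, so that the conditional expectations genuinely collapse to the constant $\Phi_0$. The finiteness inputs ($\renew{k}<\infty$ a.s.\ and $\inf_{Q\in\mathcal Q_\kappa}\Pl{Q}(H_o=\infty)>0$) should then follow from Theorem~\ref{t:gtailrenew} with routine bookkeeping.
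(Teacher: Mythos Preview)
Your proposal is correct and is essentially the paper's own argument: the paper's Lemma~\ref{l:decomp}, which writes $\{\renew{k}=j\}=B_j\cap\{\degr{T_j}{X_j}=1\}\cap\{R_{X_{j-1},j}=\infty\}$ with $B_j\in\mathcal F_j$, is precisely your decomposition $\{N=i\}=\{N\ge i\}\cap\{\beta\circ\theta_{R_i}=\infty\}$ summed over $i$ with $R_i=j$, and both proofs then pivot on the same restriction property you isolate (this is the content of the paper's Proposition~\ref{p:renew}). The only caution is that Theorem~\ref{t:gtailrenew} is stated only for the edge start, so for $\renew{1}<\infty$ from a general finite $(T,x)$ the paper invokes transience from \cite{IRVZ22} directly and flags this subtlety in a Remark; since your argument already uses transience to get each $R_i<\infty$ on $\{N\ge i\}$, this is merely a citation adjustment, not a gap.
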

In words, between each epoch $[\renew{k}, \renew{k+1}]$, a $Q$-TBRW is a $Q$-TBRW starting from an edge and conditioned to never visit the root $o$.

\subsection{Main Results: Limit Theorems}\label{ss:limitthm}
A consequence of our renewal structure is that different limit theorems for the distance hold true in the i.i.d setting with $Q \in \mathcal{Q}_\kappa$. One of them being a SLLN, which means a well-defined constant speed for the TBRW.

For the Bernoulli case,  in \cite{FIORR21} the authors prove a SLLN for the distance. More precisely, they show that for any finite initial state $(T,x)$, the walker has a well-defined speed, meaning that there exists a constant $v = v(p)$ depending on $p$ only, such that
$$
\lim_{n \to \infty} \frac{\dist{X_n}{o}}{n} = v(p), \; \Pl{T,x;p}\text{-almost surely}
$$
Thus, in this setting, the walker is {\it transient} and {\it ballistic} with a well-defined speed. In this paper we generalize their result to general distributions.
\begin{theorem}[Strong Law of Large Numbers]\label{t:lln} Consider $Q$-TBRW starting from a finite initial state~$(T,x)$. Then, 
$$
\lim_{n \to \infty} \frac{\dist{X_n}{o}}{n} = \frac{E_{ Q}\left[ \; \dist{X_{\renew{1}}}{o} \; \middle | \;  H_o = \infty \right]}{E_{ Q}\left[ \; \renew{1} \; \middle | \; H_o = \infty \right]} =: v(Q), \; P_{T,x; Q}\text{-a.s.}
$$
Moreover, $v(Q) = 0$ if, and only if, $Q = \delta_0$, that is, $Q(\{0\})=1$.
\end{theorem}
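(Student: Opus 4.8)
The plan is to read the strong law off of the renewal decomposition supplied by Theorems~\ref{t:gtailrenew} and~\ref{t:renew}: this is the standard argument that turns a renewal structure with integrable inter-renewal blocks into a law of large numbers, so the real content is already in those two theorems and the remaining work is bookkeeping. Write, for the edge initial state $(\{o,x\},x)$,
\[
\overline\tau:=E_{Q}\!\left[\,\renew{1}\;\middle|\;H_o=\infty\,\right],\qquad
\overline D:=E_{Q}\!\left[\,\dist{X_{\renew{1}}}{o}\;\middle|\;H_o=\infty\,\right],
\]
so that the claimed speed is $v(Q)=\overline D/\overline\tau$. I would first record that $\renew{1}\ge 1$ by definition and that $X_{\renew{1}}\ne o$ (indeed $\dist{X_{\renew{1}}}{o}=0$ would contradict $\dist{X_0}{o}<\dist{X_{\renew{1}}}{o}$), whence $\overline\tau,\overline D\ge 1$; and that $\dist{X_{\renew{1}}}{o}\le 1+\renew{1}$. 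Combined with Theorem~\ref{t:gtailrenew}, which gives $E_{Q}[\renew{1}]<\infty$ (indeed finite moments of every order), and with $P_{Q}(H_o=\infty)>0$ --- a fact built into the statement of Theorem~\ref{t:renew} --- this yields $1\le\overline\tau,\overline D<\infty$, so $v(Q)\in(0,\infty)$ for every $Q\in\mathcal Q_\kappa$.

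Next I would run the renewal argument proper. By~\eqref{def:renewk} and Theorem~\ref{t:renew}, the partial sums $\renew{k}=\sum_{j=1}^{k}(\renew{j}-\renew{j-1})$ and $\dist{X_{\renew{k}}}{o}=\sum_{j=1}^{k}\big(\dist{X_{\renew{j}}}{o}-\dist{X_{\renew{j-1}}}{o}\big)$ have independent increments which are i.i.d.\ for $j\ge 2$ with means $\overline\tau$ and $\overline D$, and whose $j=1$ term is $P_{T,x;Q}$-a.s.\ finite (Theorems~\ref{t:gtailrenew}--\ref{t:renew}). The classical SLLN then gives $\renew{k}/k\to\overline\tau$ and $\dist{X_{\renew{k}}}{o}/k\to\overline D$ a.s. Setting $k(n):=\max\{k\ge 0:\renew{k}\le n\}$ and using $\renew{k}\ge k$ (each increment is $\ge 1$) we get $k(n)\to\infty$ a.s.; and the renewal inequalities of~\eqref{def:renew}, transported by the shift $\theta$, force $\dist{X_{\renew{k(n)}}}{o}\le\dist{X_n}{o}<\dist{X_{\renew{k(n)+1}}}{o}$ while $\renew{k(n)}\le n<\renew{k(n)+1}$. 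Hence $\dist{X_n}{o}/n$ lies between $\dist{X_{\renew{k(n)}}}{o}/\renew{k(n)+1}$ and $\dist{X_{\renew{k(n)+1}}}{o}/\renew{k(n)}$, both of which converge to $\overline D/\overline\tau$; so $\dist{X_n}{o}/n\to v(Q)$, $P_{T,x;Q}$-a.s. Note that the limit does not see the initial state, in agreement with the statement.

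For the dichotomy, the easy direction is $Q=\delta_0$: then no leaf is ever added, $T_n\equiv T$ stays a fixed finite tree, so $\dist{X_n}{o}\le\mathrm{diam}(T)<\infty$ for all $n$ and the limit is $0$. Conversely, if $Q\ne\delta_0$ then $\kappa:=Q(\{1,2,\dots\})>0$, hence $Q\in\mathcal Q_\kappa$ and the formula above applies, giving $v(Q)=\overline D/\overline\tau\ge 1/\overline\tau>0$ since $\overline D\ge 1$ and $\overline\tau<\infty$. (As an alternative to the last inequality one could invoke the ballistic lower bound $\liminf_n\dist{X_n}{o}/n>0$ from~\cite{IRVZ22}, valid precisely on $\mathcal Q_\kappa$, together with the SLLN just established.)

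The parts I expect to require the most care are the two that are not purely formal: verifying $0<\overline\tau,\overline D<\infty$ --- which leans on the stretched-exponential tail of Theorem~\ref{t:gtailrenew} and on $P_{Q}(H_o=\infty)>0$ --- and checking that the first, initial-state-dependent inter-renewal block and the last incomplete block wash out of the time averages. Both are routine, so beyond Theorems~\ref{t:gtailrenew} and~\ref{t:renew} there is no serious obstacle; the value of the result is that it packages their content into a usable limit law.
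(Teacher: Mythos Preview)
Your argument is correct. The paper proceeds somewhat differently: instead of running the renewal SLLN by hand via the sandwich $\renew{k(n)}\le n<\renew{k(n)+1}$, it checks the moment hypotheses of Theorem~1 in Glynn--Whitt~\cite{GW93} for cumulative/regenerative processes (noting $W_1(f_c)\le(1+v)(\renew{2}-\renew{1})$, so Theorems~\ref{t:gtailrenew} and~\ref{t:renew} give all required moments finite) and reads off the SLLN --- together with the LIL and the CLT/invariance principle --- in one stroke. Your direct approach is more self-contained for the SLLN alone and makes the identification $v(Q)=\overline D/\overline\tau$ completely transparent; the paper's route buys the other limit theorems for free. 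Your handling of the dichotomy $v(Q)=0\iff Q=\delta_0$ is also more explicit than the paper's, which does not spell it out in the proof of Theorem~\ref{t:lln} itself (the $Q=\delta_0$ direction appears later in the proof of Theorem~\ref{t:1}). One cosmetic remark: for the a.s.\ finiteness of the first, initial-state-dependent block you are really using Proposition~\ref{p:renew}(a) (packaged inside Theorem~\ref{t:renew}) rather than Theorem~\ref{t:gtailrenew}, which is stated only for the edge initial state.
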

Under the RWCE framework, the above result gives a class of monotone increase RWCE which are not only transient but ballistic and have a well-defined deterministic speed. This case is not covered by G. Amir, I. Benjamini, O. Gurel-Gurevich, and G. Kozma in \cite{Amir_Changing}.

In Section \ref{s:noniid} we discuss the case of non-identically distributed leaf processes. We also construct an example of a TBRW with an independent leaf process $\{\xi_n\}_n$ for which the limit of $\dist{X_n}{o}/n$ does not exist, that is, the TBRW does not have a well-defined speed. The example illustrates that if one wants to drop the identically distributed hypothesis from Theorem \ref{t:lln}, some sort of convergence or mixing condition on the leaf process $\{\xi_n\}_n$ must be imposed.

The second type of limit theorem we have is a Law of the Iterated Logarithm. Given $Q \in \mathcal{Q}_\kappa$ for some $\kappa$, we let $\sigma^2(Q)$ be the following
\begin{equation}\label{def:var}
    \begin{split}
    \sigma^2(Q)  &  = \El{Q}\left [ ( \dist{X_{\renew{1}}}{o} - \El{Q}\left [ \dist{X_{\renew{1}}}{o}  \middle | H_o = \infty \right])^2 \middle | H_o = \infty \right].
    \end{split}
\end{equation}
That is, $\sigma^2(Q)$ is the variance of $\dist{X_{\renew{2}}}{o} - \dist{X_{\renew{1}}}{o}$.
\begin{theorem}[Law of the Iterated Logarithm]\label{t:lil} Consider a $Q$-TBRW starting from a finite initial state~$(T,x)$. Then, 
$$
\limsup_{n \to \infty}\frac{|\dist{X_n}{o} - n\cdot v(Q)|}{\sigma(Q)\sqrt{2n\log \log n}} = 1, \; P_{T,x; Q}\text{-a.s.}
$$
\end{theorem}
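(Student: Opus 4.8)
\emph{Proof strategy.} The plan is to transport the classical Hartman--Wintner law of the iterated logarithm through the renewal decomposition of Theorem~\ref{t:renew}, using the stretched-exponential tail bound of Theorem~\ref{t:gtailrenew} both to supply the moment hypotheses and to control the discrepancy between the renewal times and a generic time $n$. Write $D_k:=\dist{X_{\renew{k}}}{o}-\dist{X_{\renew{k-1}}}{o}$ and $G_k:=\renew{k}-\renew{k-1}$. By Theorem~\ref{t:renew} the vectors $(D_k,G_k)_{k\ge1}$ are independent and, for $k\ge2$, share the law of $\bigl(\dist{X_{\renew{1}}}{o},\renew{1}\bigr)$ under $\Pl{Q}(\,\cdot\mid H_o=\infty)$, a conditioning which is legitimate since $\Pl{Q}(H_o=\infty)>0$ for $Q\in\mathcal{Q}_\kappa$ (as is already needed to state Theorem~\ref{t:renew}), and by Theorems~\ref{t:gtailrenew}--\ref{t:renew} every $\renew{k}$ is a.s.\ finite. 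Because the walker's distance to $o$ changes by exactly one at each step we have $|D_k|\le G_k$, so Theorem~\ref{t:gtailrenew} (divided by $\Pl{Q}(H_o=\infty)$) gives $\El{Q}[e^{c\sqrt{\renew{1}}}\mid H_o=\infty]<\infty$ for small $c>0$ and hence moments of all orders for $D_k$ and $G_k$. In particular $\mu_G:=\El{Q}[\renew{1}\mid H_o=\infty]\in(0,\infty)$, and the strong law for the i.i.d.\ sums $\renew{K}=\sum_{k\le K}G_k$ (equivalently, Theorem~\ref{t:lln}) yields $K(n)/n\to1/\mu_G$ a.s.\ for the counting process $K(n):=\max\{k\ge0:\renew{k}\le n\}$.

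The core step is the LIL along the renewal subsequence. Put $W_k:=D_k-v(Q)\,G_k$; since $v(Q)=\El{Q}[\dist{X_{\renew{1}}}{o}\mid H_o=\infty]/\mu_G$, the $W_k$ with $k\ge2$ are i.i.d.\ with mean zero and finite variance, and the partial sums telescope,
\begin{equation*}
\sum_{k=1}^{K}W_k=\dist{X_{\renew{K}}}{o}-v(Q)\,\renew{K}-\dist{X_0}{o}.
\end{equation*}
Applying Hartman--Wintner to $(W_k)_{k\ge2}$ (the constant initial term $W_1$ being irrelevant for the $\limsup$) gives
\begin{equation*}
\limsup_{K\to\infty}\frac{\bigl|\dist{X_{\renew{K}}}{o}-v(Q)\,\renew{K}\bigr|}{\sqrt{2K\log\log K}}=\sigma_\ast\quad\text{a.s.},
\end{equation*}
where $\sigma_\ast^2$ is the variance of $\dist{X_{\renew{1}}}{o}-v(Q)\,\renew{1}$ under $\Pl{Q}(\,\cdot\mid H_o=\infty)$. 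Rewriting the normalisation through $K=K(n)\sim n/\mu_G$ and $\log\log K(n)\sim\log\log n$ turns this into the iterated-logarithm statement in the variable $n$ along the times $n=\renew{K}$, with limiting constant $\sigma_\ast/\sqrt{\mu_G}$; a standard variance bookkeeping of the renewal decomposition identifies this constant with $\sigma(Q)$ of \eqref{def:var}.

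It remains to interpolate between the renewal times. For $\renew{K(n)}\le n<\renew{K(n)+1}$ the defining properties of the $\renew{k}$ — the distance stays $\ge\dist{X_{\renew{K(n)}}}{o}$ after $\renew{K(n)}$ and $<\dist{X_{\renew{K(n)+1}}}{o}$ before $\renew{K(n)+1}$ — give $0\le n-\renew{K(n)}<G_{K(n)+1}$ and $0\le\dist{X_n}{o}-\dist{X_{\renew{K(n)}}}{o}<D_{K(n)+1}$. By Theorem~\ref{t:gtailrenew} and Borel--Cantelli, $\max(D_{K(n)+1},G_{K(n)+1})=O\bigl((\log n)^2\bigr)=o\bigl(\sqrt{n\log\log n}\bigr)$ a.s., so
\begin{equation*}
\dist{X_n}{o}-n\,v(Q)=\bigl(\dist{X_{\renew{K(n)}}}{o}-v(Q)\,\renew{K(n)}\bigr)+O\bigl((\log n)^2\bigr)\quad\text{a.s.},
\end{equation*}
and because the subsequence $\{\renew{K}\}_K$ has gaps of size $O((\log n)^2)$ it is dense enough that the $\limsup$ over $n$ coincides with the $\limsup$ over $K$. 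Combining this with the previous display proves the theorem (if $\sigma(Q)=0$ the same argument gives $\dist{X_n}{o}-nv(Q)=o(\sqrt{n\log\log n})$).

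I expect the main difficulty to be exactly this passage to arbitrary times, together with pinning down the constant: one must center the renewal increments by $v(Q)\,\renew{K}$ rather than by $K\,\El{Q}[\dist{X_{\renew{1}}}{o}\mid H_o=\infty]$, so that the $\Theta(\sqrt{K\log\log K})$ fluctuations of the counting process $K(n)$ about $n/\mu_G$ get absorbed correctly into $\sigma(Q)$, and one must verify that the polylogarithmic control of the inter-renewal gaps — which relies on the uniformity built into Theorem~\ref{t:gtailrenew} — is strong enough that neither the upper half ($\limsup\le1$) nor the lower half ($\limsup\ge1$) of the iterated-logarithm law is lost under the interpolation.
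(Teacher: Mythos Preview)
Your argument is correct and follows the classical hands-on route for regenerative processes: apply Hartman--Wintner to the centered i.i.d.\ increments $W_k=D_k-v(Q)G_k$, convert the normalisation via $K(n)\sim n/\mu_G$, and interpolate to arbitrary $n$ by controlling the inter-renewal gaps through Borel--Cantelli and the stretched-exponential tails of Theorem~\ref{t:gtailrenew}. The paper takes a much shorter path: rather than carrying out any of this, it records the trivial bound $W_1(f_c)\le(1+v)(\renew{2}-\renew{1})$ and then reads Theorems~\ref{t:lln}, \ref{t:lil} and~\ref{t:clts} off simultaneously from Theorem~1 of Glynn and Whitt~\cite{GW93}, which packages exactly the argument you sketch (and its analogues for the SLLN and functional CLT) into a single citable black box. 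Your approach is more transparent and self-contained; the paper's is more economical and delivers all four limit theorems in one stroke. The one place to be careful in your write-up is the sentence ``a standard variance bookkeeping identifies this constant with $\sigma(Q)$'': your limiting constant is $\sigma_\ast/\sqrt{\mu_G}$ with $\sigma_\ast^2=\mathrm{Var}\bigl(\dist{X_{\renew{1}}}{o}-v(Q)\renew{1}\,\big|\,H_o=\infty\bigr)$, whereas \eqref{def:var} literally defines $\sigma^2(Q)$ as $\mathrm{Var}\bigl(\dist{X_{\renew{1}}}{o}\,\big|\,H_o=\infty\bigr)$; in the paper this is reconciled only implicitly, by declaring $\sigma^2(Q)$ to be Glynn--Whitt's $\beta$ (Table~\ref{tab:notation}), so you should not look for a genuine algebraic identity between the two expressions.
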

Finally, we also establish CLT-type theorems. That is, a CLT and an Invariance Principle (Functional CLT) for the distance of the TBRW.
\begin{theorem}[Central Limit Theorems]\label{t:clts} Consider a $Q$-TBRW starting from a finite initial state~$(T,x)$. Then, 
\begin{enumerate}
    \item[(a)] $$
\frac{\dist{X_{n}}{o} - n\cdot v(Q)}{\sigma(Q)\sqrt{n}} 
$$
converges in law to a standard normal distribution.
    \item[(b)] $$
\frac{\dist{X_{\lfloor nt \rfloor}}{o} - {\lfloor nt \rfloor}v(Q)}{\sigma(Q)\sqrt{n}} 
$$
converges in law to $B(t)$ where $\{B(t)\}_{t\ge 0}$ is a standard Brownian Motion.
\end{enumerate}

\end{theorem}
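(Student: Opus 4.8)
The plan is to derive both CLT statements from the renewal structure (Theorem~\ref{t:renew}) together with the uniform tail bound (Theorem~\ref{t:gtailrenew}) by reducing to the classical CLT and Donsker's theorem for i.i.d.\ sequences. Write $D_n := \dist{X_n}{o}$, and for $k \ge 1$ set $Y_k := D_{\renew{k}} - D_{\renew{k-1}}$ and $L_k := \renew{k} - \renew{k-1}$. By Theorem~\ref{t:renew}, the vectors $\{(Y_k, L_k)\}_{k \ge 2}$ are i.i.d.\ with common law that of $(\dist{X_{\renew{1}}}{o}, \renew{1})$ under $P_Q(\,\cdot \mid H_o = \infty)$, and $(Y_1, L_1)$ is an independent ``initial'' term. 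First I would check the moment hypotheses: Theorem~\ref{t:gtailrenew} gives a stretched-exponential tail for $\renew{1}$ under $P_Q$, and since $P_Q(H_o = \infty) \ge$ some constant bounded below in terms of $\kappa$ (this should already be implicit in the finiteness claims following \eqref{def:renewk}; if not, it follows from uniform ellipticity by a short argument), the conditioned variable $\renew{1}\mid\{H_o=\infty\}$ also has all moments finite; the same holds for $\dist{X_{\renew{1}}}{o} \le \renew{1}$. In particular $\sigma(Q) \in (0,\infty)$ is well-defined and, by Theorem~\ref{t:lln}, $E_Q[\renew{1}\mid H_o=\infty] < \infty$ so $v(Q) \in (0,\infty)$ whenever $Q \ne \delta_0$ (the degenerate case is trivial).

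**Part (a): the CLT.** The standard device is to run the renewal clock. Let $k(n) := \max\{k : \renew{k} \le n\}$ be the number of renewals up to time $n$. Then $D_{\renew{k(n)}} \le D_n \le D_{\renew{k(n)+1}}$, and similarly $\renew{k(n)} \le n < \renew{k(n)+1}$. The argument proceeds in three steps: (i) apply the classical i.i.d.\ CLT to $\sum_{k=2}^{m} (Y_k - E[Y_1^{\mathrm{cond}}])$ and (the renewal-theoretic substitution lemma) replace the deterministic index $m$ by the random index $k(n)$, using $k(n)/n \to 1/E[\renew{1}\mid H_o=\infty] =: 1/\mu$ a.s.\ (law of large numbers for renewal processes); (ii) control the discrepancy $D_n - D_{\renew{k(n)}}$, which is at most $\renew{k(n)+1} - \renew{k(n)} = L_{k(n)+1}$, and show $L_{k(n)+1}/\sqrt n \to 0$ in probability — this follows from the finite-moment tail since $\max_{k \le cn} L_k = o(\sqrt n)$ in probability; (iii) algebraically combine: centering $D_n$ by $n v(Q)$ rather than by $\mu^{-1}\cdot(\text{centered }Y\text{-sum})$ introduces the extra fluctuation of $\renew{k(n)}$ around $n$, and the usual computation shows the limiting variance is $\sigma^2(Q) = \mathrm{Var}(Y_1^{\mathrm{cond}} - v(Q) L_1^{\mathrm{cond}})/\mu$ — here one must verify this matches the definition \eqref{def:var}. (Indeed, at a renewal time $D_{\renew{k}} = v(Q)\renew{k}$ would hold in a ``fluid'' sense, and the standard centered CLT for such coupled sums yields exactly this variance; the two conventions for $\sigma^2$ agree because $\dist{X_{\renew{1}}}{o}$ and $\renew{1}$ are perfectly correlated in the fluid limit through the slope $v(Q)$ — I would spell this out carefully as it is the one genuinely model-specific point.)

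**Part (b): the invariance principle.** This upgrades (a) to process level. By Donsker's theorem applied to the i.i.d.\ sequence $\{Y_k - v(Q) L_k\}_{k \ge 2}$, the rescaled partial-sum process $s \mapsto \frac{1}{\sqrt n}\sum_{k=2}^{\lfloor ns \rfloor}(Y_k - v(Q)L_k)$ converges in $D[0,\infty)$ to a Brownian motion with variance parameter $\mu\sigma^2(Q)$ (or the appropriate constant from the variance bookkeeping in part (a)). A random time-change by the renewal counting process $k(\lfloor nt \rfloor)$, together with the a.s.\ uniform-on-compacts convergence $k(\lfloor nt\rfloor)/n \to t/\mu$ and the continuity of the composition map at continuous limits, converts this into convergence of $t \mapsto \frac{1}{\sqrt n}(D_{\lfloor nt\rfloor} - \lfloor nt\rfloor v(Q))$; the negligibility of the within-block increments $\sup_{t \le T}|D_{\lfloor nt\rfloor} - D_{\renew{k(\lfloor nt\rfloor)}}|/\sqrt n \to 0$ in probability (again from finite moments and a maximal inequality / Etemadi-type argument over $O(n)$ blocks) handles the last-block remainder uniformly in $t$. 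The rescaling in the theorem statement absorbs $\mu$ and $\sigma^2(Q)$ into the normalization $\sigma(Q)\sqrt n$, yielding a standard Brownian motion.

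**Main obstacle.** The routine parts are the renewal-theoretic substitutions and the Donsker/time-change machinery, which are entirely classical once the i.i.d.\ structure and moment bounds are in hand. The genuinely delicate point is the variance identification — confirming that the limiting variance produced by the coupled sum $Y_k - v(Q)L_k$ equals $\sigma^2(Q)$ as defined in \eqref{def:var}, which a priori only involves the spatial increments $\dist{X_{\renew{1}}}{o}$ and not $\renew{1}$ at all. I expect this forces an argument that, conditionally on $\{H_o = \infty\}$, the centered quantity $\dist{X_{\renew{1}}}{o} - v(Q)\renew{1}$ has the same variance as $\dist{X_{\renew{1}}}{o} - E_Q[\dist{X_{\renew{1}}}{o}\mid H_o=\infty]$ after the $1/\mu$ normalization is accounted for — equivalently, that the natural martingale decomposition aligns the time-fluctuation contribution with the slope $v(Q)$. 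A clean way to sidestep any mismatch is to prove (a) first via the coupled-sum method to get \emph{some} limiting variance $\widetilde\sigma^2(Q)$, then separately show $\widetilde\sigma^2(Q) = \sigma^2(Q)$ by an independent computation (or simply redefine the CLT around $D_{\renew{k(n)}}$ where the identification is immediate and transfer back), and finally note the LIL (Theorem~\ref{t:lil}), proved by the same renewal reduction to the i.i.d.\ LIL, independently pins down the same constant.
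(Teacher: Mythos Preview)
Your approach is correct and is essentially the content of the black box the paper invokes. The paper's own proof is a two-line appeal to Theorem~1 of Glynn--Whitt \cite{GW93} on limit theorems for cumulative functionals of regenerative processes: it records the bound
\[
W_1(f_c)\;\le\;(1+v)(\renew{2}-\renew{1}),
\]
observes that the stretched-exponential tail of $\renew{2}-\renew{1}$ from Theorem~\ref{t:gtailrenew} therefore gives all moments of $W_1$, $W_1(f_c)$, $Y_1$, $Y_1(f_c)$, and concludes by citation. Your renewal-counting/Anscombe argument for part~(a) and Donsker-plus-random-time-change argument for part~(b) are precisely what the Glynn--Whitt theorem packages, so the two proofs agree in substance; the paper's version trades self-containment for brevity, while yours makes the mechanism visible.

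The concern you flag as the ``main obstacle'' is well taken and is not something the paper addresses either. The standard regenerative CLT produces the variance $\mathrm{Var}\bigl(Y_1 - v(Q)L_1 \,\bigm|\, H_o=\infty\bigr)\big/\El{Q}[\renew{1}\mid H_o=\infty]$, whereas the display~\eqref{def:var} literally records only the conditional variance of the spatial increment $\dist{X_{\renew{1}}}{o}$. The paper simply identifies its $\sigma^2(Q)$ with Glynn--Whitt's $\beta$ in Table~\ref{tab:notation} without reconciling the two expressions, so whatever verification is needed there is needed in both approaches. Your suggested remedy---prove the CLT with the coupled-sum variance $\widetilde\sigma^2(Q)$ and treat the identification with \eqref{def:var} as a separate (possibly notational) issue---is the clean way to proceed.
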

\subsection{Main Results: Continuity of the Speed.} Theorem \ref{t:lln} ensures that $Q$-TBRW have a well-defined speed $v$ depending only on $Q \in \pspace$. A consequence of this result is that $v$ defines as function from $\pspace$ to $\mathbb{R}$. In the second part of this work, we explore this perspective by considering $\pspace$ with the total variation distance
\begin{equation}
    \dtv{Q}{Q'} := \frac{1}{2}\sum_{i=0}^{\infty}|Q(i) - Q'(j)|.
\end{equation}
This way, $\pspace$ becomes a metric space, which allows us to investigate analytical properties of the speed $v$. Our next contribution states that the speed is a continuous function.
\begin{theorem}[The Speed is continuous]\label{t:1}  The speed function $v: \pspace \to \mathbb{R}$ given by Theorem \ref{t:lln}
is continuous. More precisely, if $\{Q_j\}_j$ is a sequence of distributions on $\pspace$ and $Q \in \pspace$, then
$$
\lim_{j \to \infty}\dtv{Q_j}{Q} = 0 \implies \lim_{j\to \infty}v(Q_j) = v(Q).
$$
\end{theorem}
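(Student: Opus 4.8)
The plan is to derive continuity of $v$ from the two structural pillars of the paper: the renewal formula
$$
v(Q) = \frac{E_{Q}\left[ \dist{X_{\renew{1}}}{o} \,\middle|\, H_o = \infty \right]}{E_{Q}\left[ \renew{1} \,\middle|\, H_o = \infty \right]}
$$
of Theorem \ref{t:lln}, together with the uniform tail bound of Theorem \ref{t:gtailrenew}. The first observation is that if $Q_j \to Q$ in total variation, then the $Q_j$ all lie in a fixed $\mathcal{Q}_\kappa$ for $j$ large: indeed $Q_j(\{1,2,\dots\}) = 1 - Q_j(0) \to 1 - Q(0) = Q(\{1,2,\dots\})$, and unless $Q = \delta_0$ (a case handled separately, since there $v(Q)=0$ and $v$ is trivially controlled near $\delta_0$ once we show $v(Q_j)\to 0$) we may take $\kappa = \tfrac12 Q(\{1,2,\dots\})>0$. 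Hence Theorem \ref{t:gtailrenew} applies uniformly: $\sup_j \Pl{Q_j}(\renew{1}>t)\le Ce^{-C't^{1/2}}$, which gives uniform integrability of both $\renew{1}$ and $\dist{X_{\renew{1}}}{o}$ (the latter being bounded by $\renew{1}$) under the laws $\Pl{Q_j}$, and also that $\Pl{Q_j}(H_o=\infty)$ stays bounded away from $0$ (this last fact should already be implicit in the proof of Theorem \ref{t:renew}; I would cite it or re-derive it from a one-step argument using the $\mathcal{Q}_\kappa$ condition).

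Given uniform integrability, it suffices to prove that the \emph{joint law} of the pair $\big(\dist{X_{\renew{1}}}{o},\renew{1}\big)$ under $\Pl{Q_j}$, conditioned on $\{H_o=\infty\}$, converges weakly to the same pair under $\Pl{Q}$ — in fact, since these are $\mathbb{N}^2$-valued, I would aim for convergence of the full (unconditioned) distribution of the finite-horizon trajectory. The key coupling step: run a single source of randomness. On the event $\{\renew{1}\le t\}$, the process $(T_n,X_n)_{n\le \renew{1}}$ depends only on the first $\renew{1}\le t$ entries of the leaf process and the walker's coin flips. If $\xi^{(j)} = \{\xi^{(j)}_n\}$ is i.i.d.\ $Q_j$ and $\xi = \{\xi_n\}$ is i.i.d.\ $Q$, then by the maximal coupling $\Pr(\xi^{(j)}_n \ne \xi_n) = \dtv{Q_j}{Q}$ for each $n$, so $\Pr(\xi^{(j)}_n = \xi_n \text{ for all } n\le t) \ge 1 - t\,\dtv{Q_j}{Q}$. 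On this high-probability event the two TBRWs (driven by the same walker coins) agree up to time $t$, hence agree up to time $\renew{1}$ whenever $\renew{1}\le t$. Therefore for any fixed bounded function $g$ of the pair,
$$
\left| E_{Q_j}\!\big[g(\dist{X_{\renew{1}}}{o},\renew{1})\big] - E_{Q}\!\big[g(\dist{X_{\renew{1}}}{o},\renew{1})\big] \right| \le 2\|g\|_\infty\big( t\,\dtv{Q_j}{Q} + Ce^{-C't^{1/2}} \big),
$$
and sending $j\to\infty$ then $t\to\infty$ makes the right side vanish. The same coupling handles the conditioning, since $\{H_o=\infty\}$ is a tail event but $\{\renew{1}\le t\}\cap\{H_o=\infty\}$ is $\mathcal{F}_t$-measurable, and one writes $P_{Q_j}(\,\cdot \mid H_o=\infty)$ via $P_{Q_j}(\,\cdot\,,\,H_o=\infty)/P_{Q_j}(H_o=\infty)$, controlling numerator and denominator separately by the truncation-plus-coupling estimate; the denominator is bounded below uniformly as noted.

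Assembling the pieces: uniform integrability upgrades the weak convergence of $(\dist{X_{\renew{1}}}{o},\renew{1})$ under the conditioned laws to convergence of expectations, giving $E_{Q_j}[\dist{X_{\renew{1}}}{o}\mid H_o=\infty] \to E_{Q}[\dist{X_{\renew{1}}}{o}\mid H_o=\infty]$ and likewise for $E[\renew{1}\mid H_o=\infty]$; since the denominator is bounded away from $0$ (it is at least $1$, and the tail bound keeps it finite), the ratio converges, i.e.\ $v(Q_j)\to v(Q)$. For the edge case $Q=\delta_0$: here $Q_j\to\delta_0$ forces $Q_j(0)\to 1$, and one checks directly that $E_{Q_j}[\dist{X_{\renew{1}}}{o}\mid H_o=\infty]\to 0$ while $E_{Q_j}[\renew{1}\mid H_o=\infty]$ stays bounded below by $1$ (or one simply notes that the same truncation argument runs with $\kappa$ replaced by $Q_j(\{1,2,\dots\})\to 0$ only affecting constants through the tail bound, which degrades but the numerator still vanishes because the walker almost surely oscillates when few leaves are added). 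The main obstacle I anticipate is not the coupling — that is clean — but making the conditioning on $\{H_o=\infty\}$ fully rigorous: one must ensure the lower bound $\inf_j P_{Q_j}(H_o=\infty)>0$, which requires revisiting how Theorem \ref{t:renew}'s proof establishes $P_Q(H_o=\infty)>0$ and checking that the estimate is uniform over $\mathcal{Q}_\kappa$; this should follow from the uniform-ellipticity one-step bound (probability $\ge\kappa$ of adding a leaf, after which the walker has a uniformly positive chance of escaping to the new leaf and, by the tail bound, of renewing before returning), but it is the point where the argument could get technical.
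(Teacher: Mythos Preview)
Your overall architecture matches the paper's proof closely: maximal coupling of the leaf sequences, uniform tail bound from Theorem~\ref{t:gtailrenew} for uniform integrability, and the ratio formula from Theorem~\ref{t:lln}. However, there is a genuine gap in the justification of the coupling step.

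The claim that ``$\{\renew{1}\le t\}\cap\{H_o=\infty\}$ is $\mathcal{F}_t$-measurable'' is false: $\renew{1}$ is \emph{not} a stopping time, since $\{\renew{1}=j\}$ requires that the walker never return to $f(X_j)$, an infinite-future condition. Consequently, even on the event that the two leaf sequences agree up to time $t$, it does \emph{not} follow that $\renew{1}^{(Q_j)}=\renew{1}^{(Q)}$ whenever one of them is $\le t$; the other process could still return to the relevant father at some time after the coupling breaks. Your displayed inequality is in fact correct, but for a different reason. The paper works on the event $\{\renew{1}^{(Q_j)}\vee\renew{1}^{(Q)}<\zeta_j\}$ (where $\zeta_j$ is the decoupling time) and proves directly, via a short tree-geometry contradiction argument, that on this event $\renew{1}^{(Q_j)}=\renew{1}^{(Q)}$ and $\mathds{1}\{H_o^{(Q_j)}=\infty\}=\mathds{1}\{H_o^{(Q)}=\infty\}$; the complement is then controlled by the uniform tail bound. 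You need this argument, not measurability.

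Two smaller points. First, the paper avoids your concern about $\inf_j P_{Q_j}(H_o=\infty)>0$ entirely by observing that the conditioning probability cancels: $v(\mu)=E_\mu[\dist{X_{\renew{1}}}{o};H_o=\infty]/E_\mu[\renew{1};H_o=\infty]$, so one proves continuity of each unnormalized expectation and never needs a lower bound on $P_\mu(H_o=\infty)$. Second, your treatment of $Q=\delta_0$ is vague and the renewal machinery is unavailable there (the constants in Theorem~\ref{t:gtailrenew} blow up as $\kappa\to 0$). The paper instead uses the elementary deterministic bound $\dist{X_n}{o}\le \dist{x}{o}+\sum_{k=1}^n\mathds{1}\{\xi_k\ge 1\}$, which after dividing by $n$ and passing to the limit gives $v(\mu)\le \mu(\{1,2,\dots\})$ for every $\mu$, hence $v(Q_j)\to 0$ immediately.
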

In personal communication, it was suggested to the authors, by professor Y. Peres, the investigation of the speed $v$ in the Bernoulli case. That is, $v$ as a function of the parameter $p$. The above theorem gives analytical information about $v(p)$: it is a continuous function on $[0,1]$, consequently uniformly continuous. At Section \ref{s:finalc} we discuss other problems involving $v(p)$.

\subsection{Technical Challenges and Proof Ideas}
We finish this introduction with a high-level discussion about challenges and some proof ideas. \\

\noindent \underline{\it Chicken-Egg problem.} The fact that the geometry of the environment cannot be decoupled of the walker's trajectory leads to a chicken-egg situation in the analysis of the TBRW. For instance, the degree distribution of the environment has a direct impact on the walker's speed. However, the degree distribution depends on the walker's trajectory, since the walker might need to visit a vertex many times to increase its degree. Thus, in the analysis of the TBRW, we often face questions of the form: to understand the position of the walker, one needs to investigate the geometry of its environment. However, to investigate properties of its environment, one needs to understand the walker's trajectory. \\

\noindent \underline{\it Limit Theorems.} The limit theorems we have are consequence of the renewal structure together with a first renewal time with a extremely light tail.  This light tail allows us to leverage the framework constructed by P. Glynn and W. Whitt in \cite{GW93}. We highlight that, in general, proving results regarding renewal times can be a hard problem. Quoting Sznitman and Zerner in \cite{SZ99} {\it``The renewal time $\renew{1}$ is rather complicated, and the extraction of information on its tail is not straightforward."}

To control the tail of $\renew{1}$, we investigate a sequence of stopping times that are natural candidates for regeneration times. These stopping times are related to those times in which the walker hits the very bottom of the tree. Since they are hitting times, we can decompose the tail event of $\renew{1}$ as a union of events involving hitting times only. This way we leverage Strong Markov property.\\

\noindent \underline{\it Continuity of the speed.} In a nutshell, the two main challenges involving proving analytical properties of the speed for the Bernoulli case are: (1) {\it lack of any expression for $v$;} and (2) {\it the absence of a monotone coupling to compare different instances of the model.}

Regarding (1), in \cite{FIORR21} the authors show the SLLN for the Bernoulli case without providing any type of expression for $v(p)$. In a high-level, their work is similar to the work of Lyons, Pemantle and Peres \cite{lyons1995} on random walk on Galton-Watson trees. However, in that paper the stationary measure can be described explicitly (leading to an explicit formula for the speed), which is not the case in the context of the TBRW. Thus, in order to overcome (1), we first leverage the renewal structure, which gives us an expression for $v$ involving the first moment of $\renew{1}$. 

Once we have overcome (1), we are then left with the task of essentially showing that the first moment of $\renew{1}$ is continuous on $Q$. And for that, we need two ingredients: one is some sort of uniformity on $Q$ for the upper bound for the tail of $\renew{1}$ and a coupling to compare different instances of the model for different distributions $Q$'s. In order to reach that goal, we introduce a coupling inspired by the monotone coupling from bond percolation. However, in our case with probability one eventually the walkers under different distributions split up. With the uniformity on the tail of $\renew{1}$ and the coupling, we are able to show that for $Q$ and $Q'$ that are close enough of each other, in total variation distance, the walkers are more likely to regenerate before splitting up. This can be seen as the continuity of the first moment of $\renew{1}$.

\section{Tail bounds for $\renew{1}$: Proof of Theorem \ref{t:gtailrenew}}
Since our results are consequence of the renewal structure provided by Theorem~\ref{t:renew} together with the tail bounds given by Theorem \ref{t:gtailrenew}, we begin showing all the results related to the renewal structure first. Usually, extraction of distributional information about renewal times tend to be hard. Thus, our approach will be replacing $\renew{1}$ by a sequence of stopping times, which will allow us to leverage Strong Markov property in its analysis.

Let us introduce these auxiliary stopping times. We set  $\lstop_0 \equiv 0 $ and define
\begin{equation}\label{def:sigma1}
    \lstop_{1} := \inf \{ n> \lstop_{0} \, : \,  \degr{T_n}{X_n} = 1, \; \dist{X_n}{o} >\dist{X_{\lstop_{0}}}{o} \},
\end{equation}
with the convention that the {\it infimum} over the empty set is infinity. And in general, the $k$th stopping time is defined as $\lstop_k =  \lstop_1 \circ \theta_{\lstop_{k-1}} + \lstop_{k-1}$, on the event $\{\lstop_{k-1} < \infty \}$ and $\lstop_k = \infty$ on its the complement. We highlight that under the i.i.d. setting, $\lstop_k$ is finite $\Pl{T,x;Q}$-almost surely for any finite initial state $(T,x)$ and $Q\in \mathcal{Q}_\kappa$. This is a consequence of Theorem 1.3 in \cite{IRVZ22}, which guarantees transience.

We will also need to keep track on the time needed to hit the father of a leaf. For this reason we introduce for $k\in \mathbb{N}$, the following stopping times
\begin{equation}\label{def:rtime}
    \rtime_{k} :=   H_{f(X_0)}\circ \theta_{\eta_k} + \eta_k
\end{equation}
That is, $\rtime_k$ is the first visit to the father of $X_{\eta_k}$ after time $\eta_k$. For the proof of Theorem \ref{t:gtailrenew} we will need the three auxiliary results below.
\begin{lemma}\label{l:return} Fix $\kappa \in (0,1]$ and let $(T,x)$ be a finite initial state and $y \in T$. Then,
$$
 \sup_{Q\in \mathcal{Q}_\kappa}\Pl{T,x;Q}(H_{y} < \infty ) = \Pl{T,x;\kappa}(H_{y} < \infty ).
$$
\end{lemma}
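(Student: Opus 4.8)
The plan is to show that the supremum is attained at ${\rm Ber}(\kappa)$. Since ${\rm Ber}(\kappa)\in\mathcal{Q}_\kappa$, the bound $\sup_{Q\in\mathcal{Q}_\kappa}\Pl{T,x;Q}(H_y<\infty)\ge\Pl{T,x;\kappa}(H_y<\infty)$ is automatic, so everything reduces to proving
\[
\Pl{T,x;Q}(H_y<\infty)\ \le\ \Pl{T,x;\kappa}(H_y<\infty)\qquad\text{for every }Q\in\mathcal{Q}_\kappa .
\]
Heuristically, each leaf the walker attaches creates a dead-end branch that can only postpone a future visit to the fixed vertex $y\in T$, so the laziest leaf mechanism allowed by \eqref{def:UE}, namely ${\rm Ber}(\kappa)$, should maximize hitting probabilities. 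I would make this precise by a coupling.

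On one probability space I run a $Q$-TBRW $\{(T^Q_n,X^Q_n)\}_{n\ge0}$ from $(T,x)$ together with a ${\rm Ber}(\kappa)$-TBRW $\{(\widehat T_m,\widehat X_m)\}_{m\ge0}$ from $(T,x)$, the latter on its own clock. First I couple the leaf counts: at each step take $\xi_n\sim Q$ and $\beta_n\sim{\rm Ber}(\kappa)$ with $\{\beta_n=1\}\subseteq\{\xi_n\ge1\}$ — possible precisely because $Q(\{1,2,\dots\})\ge\kappa$ — and when $\beta_n=1$ identify the single new leaf on the ${\rm Ber}(\kappa)$ side with one of the new leaves on the $Q$ side; maintaining this identification keeps $\widehat T$ a rooted subtree of $T^Q$. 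Then I couple the motions so that the two walkers share a vertex for as long as possible: while synced at a vertex $v$, attach the coupled leaves and let the $Q$-walker jump uniformly; by a standard coupling of uniform jumps, whenever the target is one of the $v$-neighbours that also lies in $\widehat T$ (the father of $v$, a child of $v$ lying in $\widehat T$, or the new leaf just identified on both sides) the ${\rm Ber}(\kappa)$-walker jumps to that same vertex and the two remain synced; otherwise the $Q$-walker has stepped onto a leaf $\ell$ of $T^Q$ absent from $\widehat T$, and at that instant I freeze the ${\rm Ber}(\kappa)$-walker at $v$, let the $Q$-walker run its entire excursion in the subtree rooted at $\ell$, and re-synchronize at $v$ if and when it returns — continuing the ${\rm Ber}(\kappa)$-walker with its own fresh randomness should the $Q$-walker never return, so that $\widehat X$ is defined for all times.

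The decisive observation is that such a leaf $\ell$, together with its whole subtree in $T^Q$, consists only of vertices born after time $0$, hence is disjoint from $T$ and in particular does not contain $y$. Therefore the $Q$-walker can be at $y$ only at instants when the two walks are synced, and at every such instant $\widehat X$ sits at the same vertex $y$. Thus $\{H_y<\infty\text{ for }X^Q\}\subseteq\{H_y<\infty\text{ for }\widehat X\}$, which is exactly the displayed inequality (recall that $\Pl{T,x;\kappa}$ denotes the ${\rm Ber}(\kappa)$-TBRW); combined with the trivial reverse bound this gives the asserted identity, with the supremum attained at ${\rm Ber}(\kappa)$.

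I expect the main obstacle to be verifying that the process $\widehat X$ built by this freeze-and-resume recipe is genuinely a ${\rm Ber}(\kappa)$-TBRW started from $(T,x)$. Read along its own clock, each of its steps must attach a ${\rm Ber}(\kappa)$ number of leaves and jump to a uniform neighbour, with data fresh given its past; this holds because the successive resumption instants are stopping times for the filtration generated by all the driving variables, so the per-step data used there are independent of the history — a ``fresh variable at a stopping time'' fact, which also shows the $Q$-marginal is left untouched by the coupling. A secondary subtlety is that the $Q$-walker may be trapped forever inside one of these fresh subtrees, so that only finitely many syncs occur; but then the law of $\widehat X$'s next step given its own past is a mixture of the coupled transition and the free-extension transition, both equal to the ${\rm Ber}(\kappa)$-TBRW kernel, so $\widehat X$ is still a bona fide ${\rm Ber}(\kappa)$-TBRW, and the inclusion above persists because a trapped $Q$-walker can no longer reach $y$.
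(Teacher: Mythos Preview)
Your proof is correct and follows essentially the same route as the paper: both construct a coupling in which the $\kappa$-TBRW and the $Q$-TBRW share a common subtree (the paper realizes this via vertex labels $q$, $\kappa$, $q\kappa$ driven by uniforms $U_n$, you via the coupled pair $(\xi_n,\beta_n)$), the $\kappa$-walker is frozen while the $Q$-walker makes an excursion into the $Q$-only part, and fresh randomness is used if the $Q$-walker never returns; the conclusion $\{H_y^{(Q)}<\infty\}\subseteq\{H_y^{(\kappa)}<\infty\}$ then follows because $y$ lies in the initial tree and hence in the shared structure. Your write-up is in fact a bit more explicit than the paper's about why the frozen-and-resumed process has the correct ${\rm Ber}(\kappa)$-TBRW law; one minor inaccuracy is that the vertex $\ell$ you step onto need not be a leaf of $T^Q$ (it may carry a subtree built during earlier excursions), but your argument only uses that the whole subtree below $\ell$ is disjoint from the initial $T$, which remains true.
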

\begin{lemma}\label{l:rtime2}Let $Q \in \mathcal{Q}_\kappa$ for some $\kappa >0$. Then, for all~$k$
    \begin{equation}\label{ineq:Hbound}
        \Pl{Q}\left( \max_{j \le k} \rtime_j < \infty \right) \le (k+1) \left[\Pl{Q}\left(  H_{o} < \infty \right)\right]^{k+1}.
    \end{equation}
\end{lemma}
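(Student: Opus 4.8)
The plan is to bound the probability that all of the stopping times $\rtime_0, \rtime_1, \dots, \rtime_k$ are finite by conditioning successively and showing that each ``hit the father'' event forces the walker to have made a return to the root region, which happens with probability at most $\Pl{Q}(H_o < \infty)$ each time, plus a bookkeeping factor accounting for the fact that the relevant father could be the root itself or a descendant. First I would recall that $\rtime_j = H_{f(X_0)} \circ \theta_{\eta_j} + \eta_j$, so on the event $\{\rtime_j < \infty\}$ the walker, after time $\eta_j$, returns to the father of whatever leaf it was sitting on at time $\eta_j$; since at time $\eta_j$ the walker is at a leaf $X_{\eta_j}$ strictly further from $o$ than at time $\eta_{j-1}$, and between $\eta_{j-1}$ and $\eta_j$ it has increased its distance, the vertex $f(X_{\eta_j})$ lies on the path from $o$ to $X_{\eta_j}$.

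The key step is a Strong Markov decomposition. I would write
\begin{equation*}
\Pl{Q}\left( \max_{j \le k} \rtime_j < \infty \right) = \Pl{Q}\left( \rtime_k < \infty, \; \max_{j \le k-1} \rtime_j < \infty \right),
\end{equation*}
and apply the Strong Markov property at time $\eta_k$ (which is finite a.s.\ by transience, Theorem~1.3 of \cite{IRVZ22}). Conditioned on $\mathcal{F}_{\eta_k}$, the process from $\eta_k$ onward is a $Q$-TBRW started from $(T_{\eta_k}, X_{\eta_k})$ with $X_{\eta_k}$ a leaf; the event $\{\rtime_k < \infty\}$ becomes the event that this new walk hits the father of its starting vertex, i.e. moves one step back toward $o$ and returns there. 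The crucial monotonicity input is Lemma~\ref{l:return}: the probability of hitting a vertex $y$ is maximized over $\mathcal{Q}_\kappa$ at $Q = \kappa$, and more importantly, hitting the father of the current leaf from a configuration where that leaf hangs off an otherwise-built tree is dominated by the corresponding probability in the canonical ``edge'' configuration $(\{o,x\},x)$ — because extra structure only gives the walker more leaves to wander into, never helping it return. So each conditional factor is at most $\Pl{Q}(H_o < \infty)$. Iterating $k+1$ times (for $j = 0, 1, \dots, k$, noting $\rtime_0 = H_{f(X_0)}$ is the first such return) yields the power $[\Pl{Q}(H_o < \infty)]^{k+1}$, and the linear prefactor $(k+1)$ absorbs the ambiguity of which of the $k+1$ indices is ``responsible'' together with the edge case where $f(X_{\eta_j})$ coincides with $o$ or where the distances are equal rather than strictly increasing.

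The main obstacle I anticipate is making the domination ``hitting the father of a deep leaf $\preceq$ hitting $o$ from the canonical edge'' fully rigorous. The subtlety is that after time $\eta_k$ the tree $T_{\eta_k}$ carries a complicated history-dependent structure near the path from $o$ to $X_{\eta_k}$, and one must argue that this structure can only decrease the return probability relative to the clean two-vertex initial state. The right way to handle this is a coupling or a monotonicity lemma showing that additional leaves attached to interior vertices of the ancestral path act as ``traps'' that delay or prevent the walker from reaching the target, combined with the translation-invariance-flavored observation that the subtree hanging below the father of $X_{\eta_k}$ is, from the walker's perspective at that moment, distributed no more favorably for return than a fresh edge. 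Once this domination is in place, the rest is a routine induction on $k$ using the Strong Markov property, and the statement follows.
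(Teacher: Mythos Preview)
Your approach has the right ingredients --- Strong Markov at $\eta_k$ plus the domination ``hitting the father from a complicated tree $\le$ hitting $o$ from the edge'' (this is exactly Lemma~\ref{l:rtime3} in the paper) --- but there is a genuine gap in the conditioning step, and your account of the prefactor $(k+1)$ is not correct.

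The problem is that the event $\{\max_{j\le k-1}\rtime_j < \infty\}$ is \emph{not} $\mathcal{F}_{\eta_k}$-measurable: some of the returns $\rtime_j$ for $j<k$ may well occur \emph{after} time $\eta_k$. So you cannot condition on $\mathcal{F}_{\eta_k}$, peel off $\Pl{Q}(H_o<\infty)$ for the $k$th return, and leave $\Pl{Q}(\max_{j\le k-1}\rtime_j<\infty)$ intact. The paper handles this by splitting into two cases,
\[
\Pl{Q}\Bigl(\max_{j\le k}\rtime_j<\infty\Bigr)\le \Pl{Q}\Bigl(\max_{j\le k-1}\rtime_j<\eta_k,\ \rtime_k<\infty\Bigr)+\Pl{Q}\Bigl(\eta_k<\max_{j\le k}\rtime_j<\infty\Bigr).
\]
In the first term the event $\{\max_{j\le k-1}\rtime_j<\eta_k\}$ \emph{is} in $\mathcal{F}_{\eta_k}$, and your Strong Markov argument goes through to give $\Pl{Q}(H_o<\infty)\cdot\Pl{Q}(\max_{j\le k-1}\rtime_j<\infty)$. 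The second term is the new piece: on that event one shows $\eta_{j+1}=\eta_j+1$ and $f(X_{\eta_{j+1}})=X_{\eta_j}$ for all $j<k$, so after time $\eta_k$ the walker must climb the entire ancestral path of length $k+1$, yielding $[\Pl{Q}(H_o<\infty)]^{k+1}$ directly. The recursion $a_k\le \beta a_{k-1}+\beta^{k+1}$ then produces the factor $(k+1)$; it is not a bookkeeping artifact or an ``ambiguity of indices'' as you suggest, but the cumulative contribution of the second term across the $k$ iterations.
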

\begin{lemma}\label{l:taildeltasigma} Fix $\kappa \in (0,1]$ and let $\mathcal{T}_*$ be the set of pairs $(T,x)$ where $T$ is a finite rooted tree and $x$ is a leaf at maximal distance from the root. Then, there exists  a positive constant $C$ depending on $\kappa$ only such that
\begin{equation}
       \sup_{Q\in \mathcal{Q}_\kappa}\sup_{(T,x)\in \mathcal{T}_*}\Pl{T,x;Q}\left( \lstop_k - \lstop_{k-1} > t \right) \le e^{-Ct},
    \end{equation}
    for all $t \in \mathbb{N}$. 

\end{lemma}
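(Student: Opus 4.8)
The plan is to show that, uniformly over $(T,x) \in \mathcal{T}_*$ and over $Q \in \mathcal{Q}_\kappa$, the increment $\lstop_k - \lstop_{k-1}$ has a geometric-type tail. By the Strong Markov property at time $\lstop_{k-1}$ (using that $X_{\lstop_{k-1}}$ is again a leaf at maximal distance in $T_{\lstop_{k-1}}$, so the post-$\lstop_{k-1}$ process is again governed by a state in $\mathcal{T}_*$), it suffices to treat $k=1$, i.e. to bound $\Pl{T,x;Q}(\lstop_1 > t)$ uniformly over $(T,x)\in\mathcal{T}_*$ and $Q\in\mathcal{Q}_\kappa$. Recall $\lstop_1$ is the first time the walker sits on a degree-one vertex strictly deeper than $\dist{x}{o}$. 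First I would reduce to a favorable-block argument: partition time into consecutive blocks of a fixed length $L$ (to be chosen depending on $\kappa$ only), and exhibit a ``good'' event $G_i$, measurable with respect to the walk and leaf-process in the $i$-th block, with $\Pl{}(\,G_i \mid \mathcal F_{(i-1)L}\,)\ge c(\kappa)>0$ uniformly, on which $\lstop_1$ occurs within that block. Since $\{\lstop_1 > iL\}$ forces $G_1,\dots,G_i$ to all fail, conditioning successively gives $\Pl{}(\lstop_1 > iL)\le (1-c)^i$, which yields the claimed exponential bound $e^{-Ct}$.

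The good event I would use in a block starting from a state $(T',x')$ with $x'$ a leaf at maximal distance: at the first step of the block a new leaf is attached at $x'$ (probability $\ge\kappa$ by \eqref{def:UE}), the walker steps onto this new leaf (which sits strictly deeper and has degree one at that moment), and then in fact we are done essentially immediately --- but one must be careful, because after stepping onto the new leaf the walker could, in principle, later add leaves to it and thereby we must be sure $\lstop_1$ is the \emph{first} such time, which it is by definition of the infimum; the subtlety is rather that $x'$ itself has degree $1$ only if it is a leaf, which holds since $(T',x')\in\mathcal{T}_*$, but stepping from $x'$ we might also just move to its father and not gain depth. So the clean version of the good event is: \emph{at the first step of the block, at least one leaf is attached at the current position and the walker moves to one of the newly attached leaves.} Given a new leaf is attached, the current position has degree at least $2$, and among its neighbors at least one (a new leaf) lies strictly deeper; the walk picks it with probability at least $1/\degr{T'}{x'}$. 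The obstacle here is that this probability is not bounded below uniformly --- the degree of $x'$ could be arbitrarily large. This is the main difficulty, and I expect it to be the crux of the argument.

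To get around the unbounded-degree issue I would enlarge the block and the good event: instead of demanding success in one step, allow the walker up to $L$ steps, and use uniform ellipticity repeatedly. A robust choice: on the good event, the walker at \emph{each} of the first few steps attaches a leaf (each with probability $\ge\kappa$, and conditionally independent given the history since the leaf-process is i.i.d.), and we track the event that within $L$ steps the walker, starting from $x'$, reaches a newly created degree-one vertex at depth $>\dist{x'}{o}$. Concretely one can force the walker to attach a leaf at $x'$, step to it (depth increases by $1$, and it has degree $1$ the instant we arrive) --- and then $\lstop_1$ has already occurred, provided $\degr{T'}{x'}$ was not too large; if it is large, we instead note that a single step from $x'$ reaches the father with probability $\le 1/2$ only when $\deg\le 2$, so for large degree the walker is in fact \emph{more} likely to move to some deeper neighbor, new or old, all of which are leaves of $T'$ (since $x'$ is at maximal depth and $T'\in\mathcal{T}_*$ means $x'$'s neighbors other than its father are the freshly added leaves from this very step). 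Hence from any state in $\mathcal{T}_*$, conditionally on attaching $\ge 1$ leaf, the probability of moving strictly deeper onto a degree-one vertex is at least, say, $1/2$: either $\deg\le 2$, in which case there is exactly one deeper neighbor out of at most two, or $\deg\ge 3$, in which case at least two of the $\ge 2$ non-father neighbors are deeper. Combining, $\Pl{}(\lstop_1 = \text{first step of block}\mid\mathcal F)\ge \kappa/2$, uniformly. This gives $c(\kappa)=\kappa/2$, $L=1$, and the lemma with $C = -\log(1-\kappa/2)$.

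Thus the remaining work is: (i) verify carefully that a state reached at time $\lstop_{k-1}$ lies in $\mathcal{T}_*$ so the induction on $k$ via Strong Markov closes; (ii) verify the combinatorial claim that from $(T',x')\in\mathcal{T}_*$, conditioned on attaching at least one leaf, the current vertex has at least $\lceil\degr{T'}{x'}/2\rceil$ neighbors that are strictly deeper and of degree one at the moment of arrival, so the walk selects one with probability $\ge 1/2$; and (iii) assemble the block bound $\Pl{T,x;Q}(\lstop_1 > t)\le (1-\kappa/2)^{\lfloor t\rfloor}\le e^{-Ct}$. Step (ii) --- pinning down the local picture of the tree at a maximal-depth leaf and checking it is preserved under the dynamics until $\lstop_1$ --- is where I expect the real care to be needed; everything else is a routine Strong Markov decomposition.
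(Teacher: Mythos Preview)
Your reduction to $k=1$ via Strong Markov, together with the observation that $(T_{\lstop_{k-1}},X_{\lstop_{k-1}})\in\mathcal{T}_*$, is correct and matches the paper's Claim. The gap is in step~(iii). Your one-step argument does establish $\Pl{T,x;Q}(\lstop_1=1)\ge\kappa/2$ for any $(T,x)\in\mathcal{T}_*$, since $x$ is a leaf, so after attaching $\xi_1\ge 1$ leaves the walker moves to one of them with probability $\xi_1/(1+\xi_1)\ge 1/2$. But you cannot iterate this to get $(1-\kappa/2)^t$: on $\{\lstop_1>1\}$ the walker has necessarily stepped to $f(x)$, and from depth $\dist{x}{o}-1$ a single step reaches depth at most $\dist{x}{o}$, not strictly beyond it, so $\Pl{}(\lstop_1=2\mid\mathcal{F}_1)=0$. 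More generally, on $\{\lstop_1>i-1\}$ the walker may sit at any depth in $[0,\dist{x}{o}]$, and whenever it is strictly below $\dist{x}{o}$ the conditional probability of $\lstop_1=i$ vanishes. Your discussion of the case ``$\degr{T'}{x'}$ large'' suggests you are implicitly placing the start of each block at a state in $\mathcal{T}_*$, but that is exactly what fails after the first block.

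Nor can this be repaired with any fixed block length $L=L(\kappa)$. The initial tree may have arbitrary height $h=\dist{x}{o}$; with positive probability the walker drifts back to the root, and from there at least $h+1$ steps are needed to reach depth $h+1$. Since $h$ is unbounded over $\mathcal{T}_*$, no $L$ depending only on $\kappa$ can guarantee a uniform lower bound on $\Pl{}(\lstop_1\le (i{-}1)L+L\mid\mathcal{F}_{(i-1)L})$. The paper addresses precisely this by first bounding $\{\lstop_1>t\}\subseteq\{\dist{X_n}{o}\le\dist{X_0}{o},\ \forall n\le t\}$ and then invoking a uniform coupling of the distance process with a right-biased random walk on $\mathbb{Z}$ (Lemma~\ref{l:unicoupling}, a uniform-in-$Q$ version of Lemma~5.1 in~\cite{FIORR21}); Hoeffding's inequality for the biased walk then yields $e^{-Ct}$. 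That drift result is the substantive missing ingredient---it is what controls the excursions of the walker away from the bottom of the tree, which your block argument does not.
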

We will defer their proofs to the next two subsections. For now, we will focus on showing how to obtain Theorem \ref{t:gtailrenew} from them.
\begin{proof}[Proof of Theorem \ref{t:gtailrenew}: Tail bounds for $\renew{1}$] It is important to recall that our initial state throughout this proof is a rooted edge $\{o,x\}$ with the walker starting at~$x$. Also, we will write $C$ and $C'$ for constants and they may change from line to line. Thus, for any~$t \in \mathbb{N}$, the following identity of events holds
\begin{equation}\label{eq:tailevent}
    \Lbrace \renew{1} > t\Rbrace = \bigcup_{k=1}^t\Lbrace \lstop_{k-1} \le t, \max_{j \le k-1} \rtime_{j} < \infty, \lstop_k > t\Rbrace.
\end{equation}
That is, $\renew{1}$ is larger than $t$ if either the walker visits the root and does not see itself at a leaf before time $t$ or it sees itself at a leaf, but eventually it visits the father of that leaf.


Now, observe that, by Lemma \ref{l:taildeltasigma} and union bound, it follows that for any $k$, 
\begin{equation}\label{ineq:sigmaktail}
    \Pl{Q}\left( \lstop_k > t \right) \le \sum_{j=1}^k  \Pl{Q}\left( \lstop_j - \lstop_{j-1} > t/k \right) \le ke^{-Ct/k},
\end{equation}
in which, the constant $C$ depends only on $\kappa$. Moreover, by Theorem 1.3 in \cite{IRVZ22}, under our settings, the walker is transient, which implies that $\beta:=\Pl{\kappa}(H_o < \infty) < 1$. Consequently, by Lemmas \ref{l:return} and \ref{l:rtime2}
\begin{equation}\label{ineq:bhtilde}
    \Pl{Q}\left( \max_{j \le k} \rtime_j < \infty \right) \le (k+1)[\Pl{Q}\left( H_o < \infty \right)]^{k+1} \le (k+1)e^{-\beta(k+1)},
\end{equation}
for any $Q \in \mathcal{Q}_\kappa$. Finally, using \eqref{eq:tailevent} and \eqref{ineq:bhtilde} and \eqref{ineq:sigmaktail} gives us
\begin{equation}
    \begin{split}
        \Pl{Q} (\renew{1} > t) & \le \sum_{k=1}^{t^{1/2}} \Pl{Q}(\lstop_k > t) + \sum_{k=t^{1/2}}^{t} \Pl{Q}\left(\max_{j \le k-1} \rtime_j < \infty\right) \\
        & \le \sum_{k=1}^{t^{1/2}} ke^{-Ct/k} + \sum_{k=t^{1/2}}^{t} (k+1)e^{-C'(k+1)} \\
        & \le C'te^{-Ct^{1/2}} + Ct^2e^{-C't^{1/2}}.
    \end{split}
\end{equation}
By adjusting the constants one obtain the desired bound.
\end{proof}

\subsection{Proof of Lemmas \ref{l:return} and \ref{l:rtime2}}
We start by Lemma \ref{l:return}. Given $Q \in \mathcal{Q}_\kappa$, we will construct a coupling between a $Q$-TBRW and a $\kappa$-TBRW both starting from $(T,x)$. The coupling will be constructed in a way that if the $Q$-TBRW has visited $y \in T$, then the $\kappa$-TBRW has also visited it. 

\begin{proof}[Proof of Lemma \ref{l:return}] Before we start the proof, we refer the reader to Figure \ref{fig:f_coupling_pq} below which might be helpful for a better understanding of the coupling to be constructed. 
    \begin{figure}[h]
        \centering
        \includegraphics[width=0.4\linewidth]{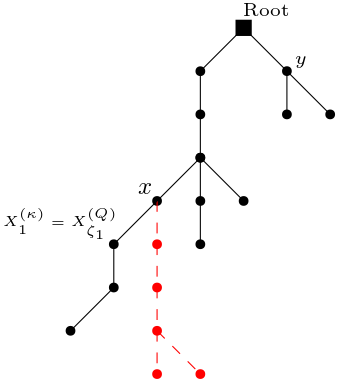}
        \caption{The red structure represents an excursion of the $Q$-TBRW not visible to the $\kappa$-TBRW.}
        \label{fig:f_coupling_pq}
    \end{figure}

In our coupling the two instances of the TBRW walk on the same tree, however, some of the vertices might be visible only to the $Q$-TBRW and others only to the $\kappa$-TBRW. We denote by $\mathbf{T}$ the tree over which both walkers are walking on. The vertices of $\mathbf{T}$ have one of the three following labels: $\kappa$, $q$ or $q\kappa$. Vertices with the third label are visible to both walkers, whereas vertices labeled $\kappa$ (resp. $q$) are visible to the $\kappa$-TBRW (resp. $Q$-TBRW) only.

    
We will use an i.i.d. sequence $\{U_n\}_n$, where $U_1 \sim {\rm Uni}[0,1]$, as the common source of randomness to sample $\{\xi_n^{(Q)}\}_n$ and $\{\xi_n^{(\kappa)}\}_n$. We will generate $\xi_n^{(Q)}$ in the following way:
\begin{enumerate}
    \item  Conditioned on $\{U_n \le \kappa\}$, we put $\xi_n^{(Q)} = j$, with $j$ sampled independently from the past from the probability distribution $\mu$ over $\{1,2,\dots,\}$ defined as follows
    $$
    \mu(j) := \frac{Q(j)}{Q(\{1,2,\dots\})}.
    $$
    \item Whereas, conditioned on $\{U_n > \kappa\}$, we put $\xi_n^{(Q)} = j$ with $j$ being sampled independently from the past from the probability distribution $\nu$ over $\{0,1,2,\dots\}$ defined as
    $$
        \nu(j) := \frac{Q(0)}{1-\kappa}\delta_0(j) + \frac{Q(j)}{1-\kappa}\left( 1 -\frac{\kappa}{Q(\{1,2,\dots \})}\right)(1-\delta_0(j)),
    $$
\end{enumerate}
where $\delta_0$ is the probability distribution such that $\delta_0(\{0\}) = 1$. Notice that  the probability distribution $\nu$ is well-defined since $Q(\{1,2,\dots,\}) \ge \kappa$. Moreover, from the above definition, we have that $\{\xi^{(Q)}_n\}_n$ is an i.i.d sequence with $\xi_1^{(Q)}\sim Q$. 

We want to couple the two instances of the TBRW starting from the same initial state $(T,x)$. For this reason, we start setting $\mathbf{T}_0$ as $T$ with all of its vertices labeled as $q\kappa$. We also set $X_0^{(Q)} = X_0^{(\kappa)} = x$. Then, we obtain $\mathbf{T}_1$ as follows. We attach~$\xi_1^{(Q)}$ vertices to $x$. If $U_1 \le \kappa$, we label one of the $\xi_1^{(Q)}$ vertices $q\kappa$, and the (possible)~$\xi_1^{(Q)} -1$ remaining vertices receive label $q$. If $U_1 >\kappa$, then we label them, if any, $q$.

Then, we let $X_1^{(Q)}$ to be a neighbor of $X_0^{(Q)}$ that is uniformly chosen with label~$q$ or $q\kappa$. If the $Q$-TBRW jumps to a vertex labeled $q$, we need to wait for it to return to a vertex that is visible to the $\kappa$-TBRW. For this reason, we also need to keep track on times when the $Q$-TBRW moves on the common structure, that is, when it jumps only over $q\kappa$-labeled vertices. We set $\zeta_0 \equiv 0$ and define 
\begin{equation}
    \zeta_1 := \inf\{ n > \zeta_0 \; : \; {\rm label }(X^{(Q)}_{n-1}) = {\rm label }(X^{(Q)}_{n}) = q\kappa\}.
\end{equation}
In words, $\zeta_1$ is the first time the $Q$-TBRW moves over portions of $\mathbf{T}$ seen by both walkers. 

If $\zeta_1 < \infty$, then we let $X_1^{(\kappa)} = X_{\zeta_1}^{(Q)}$. We also label all vertices added in the time interval $(0,\zeta_1)$ as $q$. Due to the tree structure of the graph and that $X^{(Q)}_0$ has label $q\kappa$, $X^{(Q)}_{\zeta_1}$ is a neighbor of $X^{(Q)}_{0}$ with label $q\kappa$. And conditioned on the $Q$-TBRW having walked on the common structure seen by both walkers, $X_1^{(\kappa)}$ is distributed as a single step of a simple random walk on $\mathbf{T}_{\zeta_1}$ with all the $q$-vertices removed. On the event $\zeta_1 = \infty$ we move the $\kappa$-TBRW process independently using a fresh source of randomness for it.

In general, we define $\zeta_k$ as the $k$th jump of the $Q$-TBRW on the portions of $\mathbf{T}$ seen by both walkers. If $\zeta_k$ is finite for $k>1$, then $X_{\zeta_k}^{(Q)} =  X_k^{(\kappa)}$, and we can repeat the procedure above described. In terms of trees, $T^{(\kappa)}_k$ is obtained from $\mathbf{T}_k$ by removing all the $q$-vertices. Whereas $T^{(Q)}_k$ is obtained by removing the $\kappa$-vertices.

Notice that in this coupling, if the $Q$-TBRW visited $y \in T$, where $T$ is the initial tree, it must have visited it at some time $\zeta_k$, due to the tree structure. This means, by construction, that the $\kappa$-TBRW also visited $y$. In symbols, the following inclusion of events holds
$$
\{H_{y}^{(Q)} < \infty \} \subseteq \{H^{(\kappa)}_{y} < \infty \},
$$
which gives us the result.
\end{proof}
We point out that in the above proof, if $Q(\{2,3,\dots,\}) > 0$ or $Q(\{1\}) > \kappa$, then there is a positive probability that the $Q$-TBRW adds a vertex that is not visible to the $\kappa$-TBRW in the first step. And since both processes are transient, there is also a positive probability that $\zeta_1 = \infty$ and then the $Q$-TBRW does not visit $y$.  In that case, the $\kappa$-TBRW starts evolving independently and there is a positive probability of visiting $y$. In this case, 
$$
\Pl{T,x;Q}(H_y^{(Q)} < \infty) < \Pl{T,x;\kappa}(H_y^{(Q)} < \infty).
$$.

In order to show Lemma \ref{l:rtime2}, we will need the following auxiliary result.
\begin{lemma}\label{l:rtime3}  Let $\mathcal{T}$ be the set of pairs $(T,x)$, where $T$ is a finite rooted tree and $x$ is a vertex of $T$ different than the root, and fix $Q\in \pspace$. Then, 
    $$
        \sup_{(T,x) \in \mathcal{T}}\Pl{T,x;Q}(H_{f(x)} < \infty) = \Pl{Q}(H_o < \infty).
    $$
In words, having a structure below $x$ makes harder to visit its father $f(x)$.
\end{lemma}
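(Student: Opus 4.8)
The plan is to prove this by a coupling argument that compares a $Q$-TBRW started from $(T,x)$ with a $Q$-TBRW started from the rooted edge $(\{o,x\},x)$. The key structural observation is that the inequality should really be an \emph{equality}: the value $\Pl{Q}(H_o<\infty)$ is an upper bound because, from the walker's perspective standing at $x$ and trying to reach $f(x)$, any subtree hanging below $x$ (or off $x$'s descendants) can only delay or prevent the walker from climbing back up to $f(x)$ — it never helps. To make this precise I would set up a coupling in which both processes use the same leaf-adding randomness $\{\xi_n\}_n$ and the same ``local'' uniform choices for the walker, so that as long as the walker in the $(T,x)$-process stays in the portion of its tree corresponding to $\{o,x\}$ (i.e. the subtree rooted at $x$ that is ``new''), the two walkers move in lockstep, and $f(x)$ in the first process plays the role of $o$ in the second.

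The main steps, in order: (1) First I would reduce to showing $\sup_{(T,x)\in\mathcal{T}}\Pl{T,x;Q}(H_{f(x)}<\infty) \le \Pl{Q}(H_o<\infty)$, since the reverse inequality is immediate by taking $(T,x) = (\{o,x\},x)$, where $f(x)=o$, so the supremum is at least $\Pl{Q}(H_o<\infty)$. (2) Then, fixing an arbitrary finite $(T,x)\in\mathcal{T}$, I would build the coupling: run the $(\{o,x\},x)$-TBRW and the $(T,x)$-TBRW simultaneously, identifying the edge $\{o,x\}$ with the edge $\{f(x),x\}$ and identifying, step by step, the leaves each process adds to its current vertex. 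The coupling is maintained by a correspondence between the vertex currently occupied by the edge-walker and the vertex currently occupied by the $T$-walker, valid up until the first time the $T$-walker would move to a vertex that has no counterpart — but by the tree structure, the only way for the $T$-walker to reach $f(x)$ is through $x$, and the ``extra'' structure of $T$ (the part of $T$ at or below $x$ other than the freshly-grown subtree, plus everything attached above $f(x)$) is only ever reachable after passing through $x$ resp. $f(x)$. (3) The crucial claim is then: on the coupling, $\{H_{f(x)}<\infty\}$ in the $T$-process implies $\{H_o<\infty\}$ in the edge-process. This follows because to hit $f(x)$ the $T$-walker must, at the last visit to $x$ before hitting $f(x)$, step from $x$ to $f(x)$; tracking this back through the correspondence, the edge-walker has stepped from $x$ to $o$, so it has hit $o$ as well. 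Extra leaves hanging off of $x$ in $T$ only add more neighbors the $T$-walker might wander into, which is exactly why we get an inequality and not a pathwise identity of the hitting events in the other direction.

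The step I expect to be the main obstacle is making the correspondence between the two trees rigorous across time: when the edge-walker adds leaves to its current position $v$, and the $T$-walker (sitting at the corresponding vertex $v'$) adds the \emph{same} number of leaves to $v'$, one must check that the uniform step in the edge-walker's tree and in the $T$-walker's tree can be coupled so that the walkers either both step ``toward the root'' (maintaining the correspondence and eventually yielding the $o$-hit / $f(x)$-hit implication) or the $T$-walker steps into territory with no edge-walker counterpart — and that once that happens the edge-walker's trajectory can be continued with fresh independent randomness without disturbing the event $\{H_o<\infty\}$'s probability. This is the same type of bookkeeping as in the proof of Lemma \ref{l:return}, so I would model the construction on that coupling: the ``extra'' part of $T$ plays the role of the $q$-labelled vertices, invisible to the edge-walker, and the inclusion of events $\{H_{f(x)}<\infty\}\subseteq\{H_o<\infty\}$ under the coupling gives the bound. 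One should also note the degenerate possibility that $x$ has degree $1$ in $T$ with $Q=\delta_0$ giving a trapped walker, but under $Q\in\pspace$ with no ellipticity assumed here the statement is still fine since both sides are then computed in the same (possibly degenerate) regime; in fact Lemma \ref{l:rtime3} is stated for general $Q\in\pspace$, so no ellipticity is needed.
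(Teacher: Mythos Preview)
Your proposal is correct and takes essentially the same approach as the paper: couple the $(T,x)$-process with the edge-process by letting the latter wait while the former wanders in the extra structure of $T$, exactly as in Lemma~\ref{l:return}, so that the pathwise inclusion $\{H_{f(x)}<\infty\}\subseteq\{H_o<\infty\}$ holds and the supremum is attained at $(\{o,x\},x)$. One small wobble: in your obstacle paragraph you first say the edge-walker should ``continue with fresh independent randomness'' once the $T$-walker enters extra territory, but that would break the pathwise inclusion; the correct move (which you then adopt by deferring to the Lemma~\ref{l:return} construction) is to have the edge-walker \emph{wait} and only decouple if the $T$-walker never returns to common territory.
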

\begin{proof} The proof follows from a coupling similar to the one we constructed in the proof of Lemma \ref{l:return}. In this case, we have two instances of a $Q$-TBRW, $\{(T'_n,X'_n)\}_n$ and $\{(T_n,X_n)\}_n$. The former starts from $(T,x)$, whereas the latter starts from $(\{o,x\},x)$. 

Again, the process $\{(T_n,X_n)\}_n$ waits for $\{(T'_n,X'_n)\}_n$ whenever the latter moves to a vertex which is not available to the former. If the prime TBRW does not come back to a structure that is common to both processes, we let the former process move independently. By this construction, it follows that the process starting from the edge is more likely to visit $o$ than the prime process to visit $f(x)$.
    
\end{proof}
Now we are ready to prove Lemma \ref{l:rtime2}
\begin{proof}[Proof of Lemma \ref{l:rtime2}]
     We begin recalling the definition of $\rtime_{k}$.
     $$
     \rtime_{k} :=   H_{f(X_0)}\circ \theta_{\eta_k} + \eta_k.
     $$
     Clearly, $\rtime_{k} > \eta_k$. Also notice that the following bound holds
\begin{equation}\label{ineq:H1}
    \begin{split}
        \Pl{Q}\left( \max_{j \le k} \rtime_j < \infty \right) & \le \Pl{Q}\left( \max_{j \le k-1}\rtime_j < \lstop_{k}, \rtime_k < \infty \right) + \Pl{Q}\left( \lstop_k < \max_{j \le k} \rtime_j < \infty \right).
    \end{split}
\end{equation}
We will bound the second term on the RHS of the above inequality first. Recall that we are starting from an edge with the walker at the non-root tip of it. For this reason and the definition of $\lstop_1$ given at \eqref{def:sigma1} at page \pageref{def:sigma1}, on the event 
$$
\{\lstop_k < \max_{j \le k} \rtime_j < \infty\}
$$ 
we have that $X_{\lstop_k}$ is at distance $k+1$ from $o$ and $f(X_{\lstop_j}) = X_{\lstop_{j-1}}$, for $j\le k$. Indeed, since in this event $\lstop_k$ occurs before that the walker visits $f(X_{\lstop_j})$ for $j\le k$, at time $\lstop_j$, for $j<k$ the walker is at leaf and it needs to add at least one leaf to its position and jump to one of them, otherwise, $\rtime_j$ occurs before $\lstop_k$. Thus, $\lstop_{j+1} = \lstop_j+1$ and $f(X_{\lstop_{j+1}}) = \lstop_{j}$. Now, given a tree of height at leat $k+1$ with the walker at distance $k+1$ from the root, denote by $x_j$ the vertex at distance $j$ from the root in the path connecting the walker to the root. With this notation in mind we have that 
\begin{equation}
    \mathds{1}\{\lstop_k < \max_{j \le k} \rtime_j <\infty\} =  \mathds{1}\{ \max_{j \le k} H_{x_j} <\infty\} \circ \theta_{\lstop_k}\cdot \mathds{1}\{\lstop_k < \max_{j \le k} \rtime_j\},
\end{equation}
$\Pl{Q}$-almost surely. Therefore, by Strong Markov Property (shifting the process by $\lstop_k$ first, then by $H_{x_j}$'s), using that $\rtime_j$'s are stopping time, and Lemma \ref{l:rtime3}
\begin{equation}\label{ineq:HT2}
    \begin{split}
        \Pl{Q}\left( \lstop_k < \max_{j \le k} \rtime_j < \infty \right) & = \El{Q}[\mathds{1}\{\lstop_k < \max_{j\le k-1} \rtime_j \}\Pl{T_{\lstop_k}, X_{\lstop_k}; Q}(\max_{j \le k} H_{x_j} <\infty)] \\
        & \le \left[\Pl{Q}(H_o < \infty)\right]^{k+1}.
    \end{split}
\end{equation}
We also have used the fact that the leaf process $\xi$ is i.i.d. in each application of the Strong Markov property.


As for the first term in the RHS of \eqref{ineq:H1}, the Strong Markov property, shifting the process by $\lstop_k$, recalling that $\lstop_k$ is finite almost surely, and using Lemma \ref{l:rtime3}, leads to
\begin{equation}\label{ineq:HT1}
    \begin{split}
         \Pl{Q}\left( \max_{j \le k-1} \rtime_j < \lstop_{k}, \rtime_k < \infty \right) & = \El{Q}[\mathds{1}\{\max_{j\le k-1} \rtime_j < \lstop_k \}\Pl{T_{\lstop_k}, X_{\lstop_k}; Q}(H_{f(X_0)} < \infty) ] \\
         &\stackrel{\text{Lemma} \ref{l:rtime3}}{\le } \Pl{Q}(H_o < \infty) \Pl{Q}\left( \max_{j \le k-1} \rtime_j < \infty \right).
    \end{split}
\end{equation}
Combining \eqref{ineq:H1}, \eqref{ineq:HT1} and \eqref{ineq:HT2}, we obtain the following recursion
\begin{equation*}
    \Pl{Q}\left( \max_{j \le k} \rtime_j < \infty \right) \le \Pl{Q}\left( \max_{j \le k-1} \rtime_j < \infty \right)\Pl{Q}(H_o < \infty) + \left[\Pl{Q}(H_o < \infty)\right]^{k+1},
\end{equation*}
which iterated $k$ times gives us the result.

\end{proof}

\subsection{Proof of Lemma \ref{l:taildeltasigma}: Tail bounds for $\lstop_k-\lstop_{k-1}$} In \cite{FIORR21}, for each $r\in \mathbb{N}$, the authors construct a sequence of stopping times $\{\sigma^{(r)}_j\}_j$ for the Bernoulli instance of the TBRW such that
\begin{enumerate}
    \item For all $j$, $1\le |\sigma_j^{(r)} - \sigma^{(r)}_{j-1}| \le e^{\sqrt{r}}$ almost surely;

    \item For all $j$, $|\dist{X_{\sigma^{(r)}_{j+1}}}{o} - \dist{X_{\sigma^{(r)}_j}}{o}| \le r$ almost surely.
  
\end{enumerate}
Then, their Lemma 5.1, stated below, shows that $r$ can be chosen large enough so that the speed of the TBRW is bounded from below by the speed of a right-biased simple random on $\mathbb{Z}$. More formally, they show the following result.
\begin{lemma*}[Lemma 5.1 of \cite{FIORR21} ]\label{lema:couplingsk} Let $T_0$ be a rooted locally finite tree and $x_0$ one of its vertices. For any $q\in (1/2,1)$
 there exists $r = r(p,q)$ depending on $p$ and $q$ only, such that the process $\{ \mathrm{dist}(X_{\sigma^{(r)}_k}, o)/r\}_{k \ge 0}$  can be coupled with a simple random walk $\left \lbrace S_k \right \rbrace_{k \ge 0}$  on $\mathbb{Z}$, whose probability of jumping to the right is equal to $q$,  in such way that
\[
\mathbb{P} \left( \mathrm{dist}(X_{\sigma^{(r)}_k}, o) \ge rS_k, \forall k \right) = 1,
\]
when the process $\left \lbrace S_k \right \rbrace_{k \ge 0}$ starts from $\lfloor \mathrm{dist}_{T_{0}}(x_0, o)/r \rfloor$. 
\end{lemma*}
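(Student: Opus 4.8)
## Proof Proposal for Lemma 5.1 of \cite{FIORR21} (the final statement)

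The plan is to recover the coupling by controlling the \emph{expected} displacement of $\mathrm{dist}(X_{\sigma^{(r)}_k},o)$ over one $\sigma^{(r)}$-step and showing it is positive and bounded away from zero (after rescaling by $r$) once $r$ is large. The two listed properties already give us that the increments $\mathrm{dist}(X_{\sigma^{(r)}_{k+1}},o)-\mathrm{dist}(X_{\sigma^{(r)}_k},o)$ are bounded (in absolute value by $r$), so a Hoeffding/Azuma-type estimate applies conditionally. The key point to establish is that, writing $D_k := \mathrm{dist}(X_{\sigma^{(r)}_k},o)$,
\[
\mathbb{E}\bigl[D_{k+1}-D_k \,\big|\, \mathcal{F}_{\sigma^{(r)}_k}\bigr] \ge c\, r
\]
for some $c>0$ depending only on $p$, uniformly over the current state. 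Here I would use the uniform ellipticity: at a leaf the walker adds a leaf with probability $p$ and then steps onto it, so each visit to the bottom of the tree carries a genuine positive drift away from the root, and this does not degrade as the tree grows because the relevant local picture is always "a leaf and its ancestral path". Accumulating this local drift over the $\sigma^{(r)}$-window (whose length is at least $1$ and whose displacement is truncated at $r$) should yield a per-step drift of order $r$ once $r$ is large enough that the truncation at $\pm r$ rarely binds.

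Concretely, the steps in order: (1) Recall/re-derive that $\{\sigma^{(r)}_j\}_j$ satisfies properties (1)--(2), so that $(D_{k+1}-D_k)$ is a bounded sequence with $|D_{k+1}-D_k|\le r$. (2) Show the conditional-drift lower bound above, using uniform ellipticity and the transience/ballisticity results already available (Theorem 1.3 of \cite{IRVZ22}); intuitively, over a window of $e^{\sqrt r}$ steps the walker, started from a leaf at the bottom, makes net progress of order at least $r$ with overwhelming probability, and the rare bad events contribute a negligible negative correction because the increment is anyway bounded below by $-r$. (3) Fix $q\in(1/2,1)$; choose $r=r(p,q)$ large enough that $\mathbb{P}(D_{k+1}-D_k < r \mid \mathcal{F}_{\sigma^{(r)}_k}) \le 1-q$ stochastically — i.e. that conditionally on the past, $D_{k+1}-D_k$ stochastically dominates $r\cdot(2\,\mathrm{Ber}(q)-1)$ in the sense needed; this is where the drift bound from step (2) and a large-deviation estimate for the windowed displacement are combined. (4) Build the coupling step by step: given $D_k\ge rS_k$ and the conditional law of $D_{k+1}-D_k$ dominating $r(2B_{k+1}-1)$ with $B_{k+1}\sim\mathrm{Ber}(q)$ independent of the past, use the standard quantile coupling to produce $S_{k+1}=S_k+(2B_{k+1}-1)$ with $D_{k+1}\ge rS_{k+1}$. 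Induction on $k$, with the base case $S_0=\lfloor \mathrm{dist}_{T_0}(x_0,o)/r\rfloor \le D_0/r$, closes the argument.

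The main obstacle I anticipate is step (2)--(3): making the stochastic-domination statement \emph{uniform over all reachable states} (all finite trees $T$ and positions $x$ that can arise). The windowed displacement $D_{k+1}-D_k$ depends on the geometry of the current tree below the walker, which is exactly the chicken-egg difficulty flagged in the introduction. The way around it is that the definition of $\sigma^{(r)}$ forces the walker to be at a \emph{leaf} at times $\sigma^{(r)}_k$, and from a leaf the dynamics for the next $e^{\sqrt r}$ steps only "see" the ancestral path plus freshly-created leaves — a configuration for which uniform ellipticity directly gives a drift estimate independent of everything else in the tree. So the uniformity comes for free from the stopping rule; I would make this precise by invoking the Markov property at $\sigma^{(r)}_k$ and reducing to a worst-case initial state of the form "a single ancestral ray with the walker at its tip", then running a sub-random-walk comparison on that ray. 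One must also be a little careful that the displacement can be as negative as $-r$ (the walker could in principle retreat), so the domination must be proved with the truncated variable; choosing $r$ large enough kills the probability that the truncation is active, which is precisely the content of the light-tail estimates (Lemma \ref{l:taildeltasigma}) available to us.

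Finally, I note this lemma is quoted verbatim from \cite{FIORR21} for the Bernoulli case, so in the present paper it is used as a black box; the proposal above is how one would reprove it, and it also indicates how to generalize it from $\mathrm{Ber}(p)$ to $Q\in\mathcal{Q}_\kappa$ — replace "$p$" by "$\kappa$" throughout, since only the event $\{\xi_n\ge 1\}$, which has probability $\ge\kappa$, is used in the drift bound.
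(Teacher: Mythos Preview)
You correctly observe that this lemma is quoted verbatim from \cite{FIORR21} and is not proved in the present paper; it is used as a black box. The paper does, however, describe the structure of the original proof in some detail when proving the generalization (Lemma~\ref{l:unicoupling}), and that description differs substantially from your sketch. In \cite{FIORR21} the argument is a bootstrap: first a crude estimate (their Proposition~3.3) shows the walker reaches distance $\sqrt{\log n}$ in $n$ steps with high probability, then an inductive amplification (Proposition~3.4, via Claim~3.5 and Lemma~3.6) upgrades this to distance $\log^{M} n$ for any $M$; separately, backtracking is controlled by coupling with a ``Loop Process'' on a segment (their Proposition~4.4 and Corollary~4.5). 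The coupling with the biased walk $S_k$ is then assembled from these two ingredients. Your direct drift-plus-quantile-coupling route is conceptually cleaner, but two concrete problems arise.

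First, you assert that ``the definition of $\sigma^{(r)}$ forces the walker to be at a leaf at times $\sigma^{(r)}_k$''. The paper only records two properties of the $\sigma^{(r)}_j$: the time gaps are in $[1,e^{\sqrt r}]$ and the distance increments are bounded by $r$. Nothing says the walker is at a leaf at these times, and your entire uniformity argument in step~(2)--(3) rests on reducing to ``a leaf and its ancestral path''. Without access to the actual definition of $\sigma^{(r)}_j$ from \cite{FIORR21}, this is speculation, and the uniformity over reachable states --- which you yourself flag as the main obstacle --- is not established.

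Second, there is a circularity: you propose to invoke Lemma~\ref{l:taildeltasigma} to control the probability that the truncation at $\pm r$ is active, but in the paper Lemma~\ref{l:taildeltasigma} is proved \emph{using} Lemma~\ref{l:unicoupling}, which is the $Q$-version of the very statement you are trying to prove. So that appeal is not available at this point in the logical order.
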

For our purposes, we will need a version of the above lemma in which the constant~$r$ can be chosen uniformly across a whole family of TBRW. More precisely, we will need the following modified version
\begin{lemma}\label{l:unicoupling} Given $Q \in \mathcal{Q}_\kappa$, for some $\kappa \in (0,1]$, let $(T,x)$ be a finite initial state and $\{(T^Q_n,X_n^Q)\}_n$ be a $Q$-TBRW. Then, for any $q\in (1/2,1)$
 there exists $r$ depending on $\kappa$ and $q$ only, such that the process $\{ \mathrm{dist}(X^Q_{\sigma^{(r)}_k}, o)/r\}_{k \ge 0}$  can be coupled with a simple random walk $\left \lbrace S_k \right \rbrace_{k \ge 0}$  on $\mathbb{Z}$, whose probability of jumping to the right is equal to $q$,  in such way that
\[
\mathbb{P} \left( \mathrm{dist}(X^Q_{\sigma^{(r)}_k}, o) \ge rS_k, \forall k \right) = 1,
\]
when the process $\left \lbrace S_k \right \rbrace_{k \ge 0}$ starts from $\lfloor \mathrm{dist}(x_0, o)/r \rfloor$. 
\end{lemma}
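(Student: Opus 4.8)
The plan is to retrace the construction underlying Lemma 5.1 of \cite{FIORR21} and to verify that, once the leaf process is only assumed to satisfy the uniform ellipticity condition \eqref{def:UE} with constant $\kappa$, every probabilistic estimate entering that proof can be bounded in terms of $\kappa$ alone, uniformly over $Q\in\mathcal{Q}_\kappa$. First I would note that the stopping times $\{\sigma^{(r)}_j\}_j$ and the two structural properties recalled above are defined purely through the walker's trajectory $\{X_n\}_n$ and the values $\dist{X_n}{o}$; they never refer to how many leaves are attached or with what law. Hence the identical construction applies to a $Q$-TBRW for every $Q$, and properties (1) and (2) continue to hold almost surely, with the same deterministic constant $e^{\sqrt r}$ in (1) and the same bound $r$ in (2).

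The substance of Lemma 5.1 is the estimate that, conditionally on $\mathcal{F}_{\sigma^{(r)}_j}$, the walker gains the full amount $r$ in distance between $\sigma^{(r)}_j$ and $\sigma^{(r)}_{j+1}$ with probability at least $q$, once $r$ is large. I would reprove this by the same forced-descent scheme, observing that the leaf law enters only through a one-step bound: from a leaf $z$ sitting at the current bottom of the tree, with probability at least $\kappa$ the walker attaches at least one new leaf to $z$ and then, conditionally, with probability at least $1/2$ jumps onto one of the freshly created leaves, descending one level onto a fresh leaf, because a leaf to which $j\ge 1$ children have just been attached has degree $j+1$ and the walker steps onto a child with probability $j/(j+1)\ge 1/2$; attaching more leaves can only help. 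Likewise, from any vertex the walker fails to move strictly towards the root with probability at least $1/2$. These are exactly the Bernoulli-$\kappa$ bounds used in \cite{FIORR21}, now valid for every $Q\in\mathcal{Q}_\kappa$, so substituting them into the chaining and union-bound steps of that proof yields a function $g(r,\kappa)$ with $g(r,\kappa)\to 1$ as $r\to\infty$ that bounds the conditional advance probability from below, uniformly in $Q\in\mathcal{Q}_\kappa$. Fixing $r=r(\kappa,q)$ so that $g(r,\kappa)\ge q$, one then couples $\{\dist{X^Q_{\sigma^{(r)}_k}}{o}/r\}_k$ with a nearest-neighbour walk $\{S_k\}_k$ on $\mathbb{Z}$ of right-bias $q$ in the usual way: declare an $S$-step to the right exactly on the suitably thinned event of a full advance, use property (2) to get $\dist{X^Q_{\sigma^{(r)}_{k+1}}}{o}-\dist{X^Q_{\sigma^{(r)}_k}}{o}\ge -r$ on the complementary event, and start $S$ at $\lfloor\dist{x_0}{o}/r\rfloor$ so that the inequality $\dist{X^Q_{\sigma^{(r)}_k}}{o}\ge rS_k$ propagates by induction.

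I expect the main obstacle to be the bookkeeping hidden in the second paragraph: one must go line by line through the argument of \cite{FIORR21} and confirm that no estimate there uses a finer feature of the Bernoulli law than uniform ellipticity. The delicate point is that a general $Q\in\mathcal{Q}_\kappa$, in particular a heavy-tailed one, may force arbitrarily large degrees in a single step, so one has to check that this ``bushiness'' neither breaks the almost sure bounds (1)--(2) nor lowers the advance probability. As indicated above, larger degrees only strengthen the walker's drift away from the root, so the genuine worst case is controlled by $\kappa$; turning this heuristic into a uniform bound at each stage of the construction is where the work lies. A possible simplification, if the direct bookkeeping proves cumbersome, is to first invoke the coupling of Lemma \ref{l:return} between a $Q$-TBRW and the $\kappa$-TBRW to make the comparison transparent, at the cost of having to reconcile the two walkers running on different clocks.
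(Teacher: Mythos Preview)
Your proposal is correct and follows essentially the same route as the paper: both argue by inspection of the proof of Lemma~5.1 in \cite{FIORR21}, checking that every probabilistic estimate there depends only on a lower bound $\kappa$ for the probability of attaching at least one leaf, and hence extends uniformly over $\mathcal{Q}_\kappa$. The one substantive difference is that for the backtracking estimate (Step~(2), controlling hitting times toward the root) the paper does not rely on the heuristic that extra leaves only help but instead invokes Lemma~\ref{l:return} directly to dominate the $Q$-TBRW hitting time by that of the $\kappa$-TBRW; you mention this as an optional simplification, whereas in the paper it is the actual mechanism used.
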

In words, the above lemma is telling us that if we have a family of TBRW such that the probability of adding at least one leaf at each step is at least $\kappa>0$, then there is a uniform lower bound for the speed of such family of TBRW's. We will postpone its proof to the end of this section.
\begin{proof}[Proof of Lemma \ref{l:taildeltasigma}] Recall that we are starting our process from $(\{o,x\},x)$. We then begin by claiming the following.
    \begin{claim} If $X_{\eta_k}$ is a vertex at the bottom of $T_{\eta_k}$, then $X_{\eta_{k+1}}$ is vertex at the bottom of~$T_{\eta_{k+1}}$. 
    \end{claim}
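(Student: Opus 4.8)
The plan is to argue by induction on $k$, using the structure of the stopping times $\lstop_j$ together with the tree structure. The key observation is that $\lstop_{k+1}$ is, by definition, the first time after $\lstop_k$ at which the walker is at a degree-one vertex strictly farther from the root than $X_{\lstop_k}$. I want to show that the only way to first achieve such a configuration, starting from a bottom vertex $X_{\lstop_k}$, is to land on a leaf that sits at the \emph{new} maximal distance. First I would set up notation: let $h_k := \height{T_{\lstop_k}}$, and recall that ``$X_{\lstop_k}$ is at the bottom of $T_{\lstop_k}$'' means $\dist{X_{\lstop_k}}{o} = h_k$ and $X_{\lstop_k}$ is a leaf (degree one).

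The core of the argument is to track the walker's distance and the tree's height on the interval $(\lstop_k, \lstop_{k+1}]$. The crucial monotonicity is: whenever the walker attaches leaves, the height can only increase, and it increases exactly when the walker is currently at a bottom vertex and adds at least one leaf; moreover new leaves are always at distance (current walker distance)$+1$. So I would argue the following invariant holds for every $n$ with $\lstop_k \le n < \lstop_{k+1}$: the walker's position $X_n$ satisfies $\dist{X_n}{o} \le h_k$, i.e., the walker cannot have strictly exceeded its distance at time $\lstop_k$ before $\lstop_{k+1}$ — this is immediate from the definition of $\lstop_{k+1}$ as the first time the distance strictly exceeds $\dist{X_{\lstop_k}}{o} = h_k$ \emph{at a degree-one vertex}, but I also need that the distance cannot strictly exceed $h_k$ at a non-leaf vertex either. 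That last point is the heart of it: to reach distance $h_k + 1$, the walker must step onto a vertex at distance $h_k+1$; any such vertex is either a newly added leaf, or a vertex that was a leaf and then had leaves attached to it. In the first case the walker is at a degree-one vertex, so $\lstop_{k+1}$ would have already occurred. In the second case, for a vertex $v$ at distance $h_k+1$ to be non-leaf, the walker must have previously visited $v$ (to attach leaves there) — but visiting $v$ means the walker was at distance $h_k+1$ at an earlier time, contradicting minimality unless that earlier visit was itself the moment $\lstop_{k+1}$. Pushing this back to the first time the walker ever reaches distance $h_k + 1$, that first visit must be to a freshly-attached leaf, hence a degree-one vertex, hence equals $\lstop_{k+1}$.

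Therefore $X_{\lstop_{k+1}}$ is a leaf at distance $h_k + 1$, and I claim this is the bottom of $T_{\lstop_{k+1}}$. For this I need $\height{T_{\lstop_{k+1}}} = h_k + 1$, i.e., the height did not jump by more than one. But the height increases only when the walker, sitting at a current-bottom vertex, adds leaves; each such event raises the height by exactly one (new leaves are one level below the current bottom the walker occupies, and the walker can only occupy a bottom vertex at distance equal to the current height). Since on $(\lstop_k,\lstop_{k+1})$ the walker's distance never exceeds $h_k$ and only equals $h_k$ at bottom vertices, and since the first time it reaches distance $h_k+1$ is $\lstop_{k+1}$ itself, the height is $h_k$ throughout $(\lstop_k,\lstop_{k+1})$ and becomes $h_k+1$ precisely at $\lstop_{k+1}$. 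Hence $X_{\lstop_{k+1}}$ is a degree-one vertex at distance $\height{T_{\lstop_{k+1}}}$ from the root, i.e., at the bottom of $T_{\lstop_{k+1}}$, completing the induction.

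The main obstacle I anticipate is making fully rigorous the claim that the walker cannot reach a vertex at distance $h_k+1$ that is \emph{not} a leaf before time $\lstop_{k+1}$; this requires carefully disentangling the ``chicken-egg'' issue that degrees of deep vertices depend on the walker's past trajectory. The clean way around it is to focus on the \emph{first} hitting time of distance level $h_k+1$: at that first moment no vertex at that level has ever been visited, so no leaves could have been attached there, so the vertex the walker stands on must be a brand-new leaf of degree one — which forces that first hitting time to satisfy the definition of $\lstop_{k+1}$. Everything else is bookkeeping about how attaching leaves affects height and distances, which I would state as a short preliminary observation and then apply.
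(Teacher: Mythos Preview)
Your argument is correct and is essentially the paper's: level $h_k+1$ is empty at time $\eta_k$, the first visit to that level must be to a freshly attached (degree-one) vertex and hence equals $\eta_{k+1}$, and since no vertex at level $h_k+1$ has been visited before $\eta_{k+1}$, level $h_k+2$ is empty in $T_{\eta_{k+1}}$. One minor correction: the height need not remain $h_k$ on $(\eta_k,\eta_{k+1})$ --- the walker may attach a leaf while at level $h_k$ and then step toward the root --- but all you actually need (and what your first-hitting-time reasoning already gives) is $\height{T_{\eta_{k+1}}}\le h_k+1$.
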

    Recall our trees are finite and grow downwards. Thus, the bottom of the tree is the set of leaves at maximum distance from the root $o$.
    \begin{claimproof}If $X_{\eta_k}$ is a the bottom of $T_{\eta_k}$, thus $X_{\eta_k}$ is at a leaf of $T_{\eta_k}$ and there are no vertices below any vertex at the level of $X_{\eta_k}$. Now, for $\eta_{k+1}$ to occur, $X$ needs to reach the level below $X_{\eta_k}$ in $T_{\eta_k}$, which is empty. This means, $X$ needs to add a leaf at some vertex in the level of $X_{\eta_k}$ and jump to it. Since the walker needs to visit a vertex to increase its degree, there are no vertices in the level below $X_{\eta_{k+1}}$, which proves the claim.
    \end{claimproof}
     With the above claim in mind, by Strong Markov property we have that 
\begin{equation}
    P_{T,x;Q}( \eta_k - \eta_{k-1}> t) = E_{T,x;Q}\left[ P_{T_{\eta_{k-1}},X_{\eta_{k-1}};Q}(\eta_1 > t)  \right].
\end{equation}
 Now, notice that if we start the process from an initial condition $(T,x)$, where $x$ is a vertex at the bottom of $T$, then, by induction and the claim,  $X_{\eta_j}$ will be at the bottom of $T_{\eta_j}$ for all $j$. Since we are starting the process from $(T,x) \in \mathcal{T}_*$, at time $\eta_{k-1}$, the walker is at the bottom of $T_{\eta_{k-1}}$. This leads to the following bound almost surely
\begin{equation*}
        P_{T_{\eta_{k-1}},X_{\eta_{k-1}};Q}\left( \eta_1 > t \right) \le P_{T_{\eta_{k-1}},X_{\eta_{k-1}};Q}(\dist{X_n}{o} \le \dist{X_0}{o}, \;  \forall n \le t).
\end{equation*}
Now, setting $q=2/3$ in Lemma \ref{l:unicoupling}, we have that for any $Q\in \mathcal{Q}_\kappa$, there exists $r=r(\kappa,q)$ such that 
$\mathrm{dist}(X_{\sigma^{(r)}_k}, o) \ge rS_k$. Moreover, using that 
$$
1\le |\sigma_j^{(r)} - \sigma^{(r)}_{j-1}| \le e^{\sqrt{r}},
$$ almost surely and Hoeffding's inequality, it follows that there exists a positive constant~$C$ depending on $r$ only such that
\begin{equation*}
    \begin{split}
        P_{T_{\eta_{k-1}},X_{\eta_{k-1}};Q}\left( \eta_1 > t \right) &\le P_{T_{\eta_{k-1}},X_{\eta_{k-1}};Q}(\dist{X_n}{o} \le \dist{X_0}{o}, \;  \forall n \le t) \\
        & \le P_{T_{\eta_{k-1}},X_{\eta_{k-1}};Q}\left( S_{\lfloor t / e^{\sqrt{r}}\rfloor} \le \left \lfloor \frac{\dist{X_0}{o}}{r}\right \rfloor  \right) \\
        & \le e^{-Ct},
    \end{split}
\end{equation*}
which proves the lemma since $r$ depends on $\kappa$ only.
\end{proof}

\subsection{Proof of Lemma \ref{l:unicoupling}}
The proof of Lemma \ref{l:unicoupling} follows by an inspection on the proof of Lemma 5.1 in \cite{FIORR21}. We would like to show that Lemma 5.1 in \cite{FIORR21} also holds for a $Q$-TBRW with $Q \in \mathcal{Q}_\kappa$. We will point out bellow that the proof of Lemma 5.1 depends only on a lower bound for the probability of adding at least one vertex. Consequently, its proof can carried over to any  $Q$-TBRW, with $Q \in \mathcal{Q}_\kappa$, with same constants.
\begin{proof}[Proof of Lemma \ref{l:unicoupling}]The proof of Lemma 5.1 in \cite{FIORR21} is based on two main ideas:
\begin{enumerate}
    \item In $n$ steps, the walker reaches a nontrivial distance from the root. The nontrivial distance in this case is some large power of $\log n$. See Lemma 3.1 in \cite{FIORR21};
    \item Once it is far enough from the root, then it is too expensive to backtrack.
\end{enumerate}
In order to show (1), the authors employ a bootstrap argument. Firstly they show an `easier' version of their Lemma 3.1 which guarantees that the walker reaches distance $\sqrt{\log n}$ in $n$ steps with high probability. Then, they leverage this fact to show that once the walker has reached distance  $\sqrt{\log n}$, it is more likely that it increases its distance by another factor than going back to the root. This argument is then iterated in the proper scale to show that if the walker reaches distance $\log^M n$ in $n $ steps with high enough probability, then it reaches distance $\log^{M+1/2}$ in $n$ steps as well.

The initial step in the bootstrap argument is Proposition 3.3. The argument relies only on the fact that at each step $t$, regardless the current tree and the position of the walker, the walker always have probability at least $p/2$ of jumping down. The worst case is the one in which the walker is on a leaf at time $t$, then it adds a leaf to its position with probability $p$ and jump to it with probability $1/2$. With this bounded away from zero probability and enough time, one can see the walker jumping down $\sqrt{\log n}$ times in a row.

Notice that if we have a family of $Q$-TBRW with $Q \in \mathcal{Q}_\kappa$, this condition is satisfied, since in this case $P(\xi_n \ge 1) \ge \kappa$ for all $n$. Thus, our family of TBRW can increase its distance by one at any time with probability at least $\kappa/2$.

Once Proposition 3.3 is proved, the authors then show Proposition 3.4, which states that if the walker reaches distance $\log^M n$ for some $M$ in $n$ steps with high enough probability, then it reaches distance $\log^{M+1/2} n $ in the same time window also with high enough probability.

The proof of Proposition 3.3 relies on Claim 3.5 and Lemma 3.6. The proof of Claim 3.5 again relies on the fact that the walker can always increase its distance from the root with probability at least $p/2$, which is still true in our case by just replacing $p$ by $\kappa$. The main idea is that every time the walker reaches the bottom of the tree, it has this bounded away from zero probability of adding a leaf to the bottom and jumping to it, which is uniformly true in our case.

More formally, the proof of Claim 3.5 is done by bounding from above three different terms, see (3.5), (3.6) and (3.7). The upper bound given at (3.5) follows from Proposition 3.3, which depends on $\kappa$ only. As for (3.6) it also depends on $\kappa$ only since it depends on the probability of adding at least one leaf and jumping to it. Whereas (3.7) is given by a comparison with a simple random walk on a path of finite length.

The proof of Proposition 3.4 then follows a scaling argument which depends only on the bounds given by Claim 3.5 and Proposition 3.3, which by their turns depend only on $\kappa$. 

Step (2) is then shown in Section 4, where the authors give an upper on the probability of hitting far way vertices in a short period of time. The argument is a coupling with what they call Loop Process, which consists of a random walk on a line segment that at each step the walker adds a loop to its position with probability $p$. In Proposition 4.4 they show that there exists a coupling of the loop process with the Bernoulli instance of the TBRW such that if $y$ is a vertex closer to the root and at distance $\ell$ from the initial position $x_0$, then the TBRW always takes more time to reach $y$ than the loop process.

Observe that from our Lemma \ref{l:return}, it follows that the hitting time to a vertex closer to the root of $Q$-TBRW, with $Q \in \mathcal{Q}_\kappa$, is always larger than the same hitting of a Bernoulli instance of the TBRW with parameter $\kappa$. Thus, the upper bound on $\eta_y$ (their notation for hitting time to $y$) given at Corollary 4.5 in \cite{FIORR21} still holds, with the constant $C$ depending on $\kappa$ only. 

The proof of Lemma 5.1 is then done using the above results, which depend on $\kappa$ only. The reader might find instructive to see Remark 5.4 in that paper for a discussion on the dependence on $\kappa$ (p in their case) of the constants in Lemma 5.1.

Finally, this way, the coupling with a right-biased random walk $S$ can be done for any $Q$-TBRW in our family with the same constants $r$ and drift by simply using $\kappa/2$ as the lower bound on the probability of increasing the distance by $1$ in one step.
    
\end{proof}

\section{The Renewal Structure: Proof of Theorem \ref{t:renew}}\label{s:renewalstructure}
In this section we show the construction of the renewal structure for the TBRW under the i.i.d.
setting. The reader will notice that the i.i.d. hypothesis is crucial here. The reason relates to our earlier remark about Markov properties in Section \ref{sss:markov}. When the TBRW is shifted, the leaf process is shifted as well. Thus, without i.i.d. hypothesis, we should not expect the TBRW to renewal since at each ``epoch'' we would observe a TBRW under a different leaf process.

The reader might find helpful to recall the definition of $\{\renew{k}\}_k$ given at \eqref{def:renew} and \eqref{def:renewk} at Page \pageref{def:renew}. We begin by introducing a new notation. We let $T(f(x))$ be the subtree of $T$ rooted at $f(x)$ whose vertex set is formed by $f(x),x$ and all the descendants of $x$ in $T$. See Figure \ref{fig:f_tree} below.
\begin{figure}[ht]
    \centering
    \includegraphics[width=0.26\linewidth]{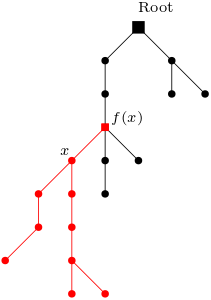}
    \caption{The subtree $T(f(x))$ in red.}
    \label{fig:f_tree}
\end{figure}

That is, to obtain $T(f(x))$, we prone $T$ by removing all the branches leaving~$f(x)$, expect the one containing $x$. When $x$ is the root $o$ of $T$, we put $T(f(o)) = T$. We also let $\mathcal{G}_k$ be the following  $\sigma$-algebra
\begin{equation}\label{def:g1}
    \mathcal{G}_k := \sigma\left( \renew{1}, \dots, \renew{k}, \Lbrace \left( T_n, X_n \right)\Rbrace_{n\le \renew{k}} \right).
\end{equation}
And for any $j \in \mathbb{N}$, we will let $R_{X_{j-1},j} $ be the first return to $X_{j-1}$ after time $j$, that is,
\begin{equation}\label{def:rj}
    R_{X_{j-1},j} := H_{X_0}^+\circ \theta_{j-1} + j,
\end{equation}
where $H_x^+ = \inf\{n > 0 \; : \; X_n = x\}$.
In order to prove Theorem \ref{t:renew} we will need two auxiliary results.
\begin{lemma}\label{l:decomp} For all $k,j \in \mathbb{N}$, it follows that there exists $B_{j} \in \mathcal{F}_{j}$, such that
    $$
        \{\renew{k} = j \} = B_{j}\cap \{\degr{T_j}{X_j} = 1\} \cap \{ R_{X_{j-1},j} = \infty \}.
    $$
\end{lemma}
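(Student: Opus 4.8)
The plan is to unpack the definition \eqref{def:renew}--\eqref{def:renewk} of $\renew{k}$ and sort the conditions appearing there into two groups: those that are measurable with respect to the walk and tree up to time $j$, and the one condition ($R_{X_{j-1},j} = \infty$) that looks into the future. First I would write out $\{\renew{1} = j\}$ explicitly. By \eqref{def:renew}, $\renew{1} = j$ means: $\degr{T_j}{X_j} = 1$; $\dist{X_s}{o} < \dist{X_j}{o}$ for all $s < j$; and $\dist{X_j}{o} \le \dist{X_t}{o}$ for all $t > j$. The first condition is clearly in $\mathcal{F}_j$. The second, ``backtracking condition,'' $\dist{X_s}{o} < \dist{X_j}{o}$ for all $s<j$, is also in $\mathcal{F}_j$. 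The point I would emphasize is that, on the event $\{\degr{T_j}{X_j}=1\}$, the walker sits at a leaf $x = X_j$ with father $f(x) = X_{j-1}$ (it must have just arrived at $x$ from its father, since $x$ is a leaf with no other neighbor), and the remaining ``never go below level $\dist{X_j}{o}$ again'' condition $\dist{X_j}{o} \le \dist{X_t}{o}\ \forall t>j$ is \emph{equivalent}, on this event, to the walker never returning to $f(x) = X_{j-1}$, i.e.\ to $\{R_{X_{j-1},j} = \infty\}$.

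The key geometric step, then, is the equivalence: conditioned on $\{\degr{T_j}{X_j}=1\}$ and the backtracking condition, one has $\dist{X_t}{o} \ge \dist{X_j}{o}$ for all $t > j$ if and only if $X_t \ne X_{j-1}$ for all $t > j$. The ``only if'' direction is immediate since $\dist{X_{j-1}}{o} = \dist{X_j}{o} - 1 < \dist{X_j}{o}$. For the ``if'' direction: the only way for the walk to reach a vertex at level $< \dist{X_j}{o}$ starting from $X_j$ is to pass through $f(X_j) = X_{j-1}$, because the tree below level $\dist{X_j}{o}-1$ is connected to $X_j$'s component only through $X_{j-1}$ — here I would invoke the tree structure and the fact that new leaves are only ever attached to the current position, so the subtree hanging below $X_j$ at time $j$ stays ``sealed off'' from the root except through $X_{j-1}$ unless the walk first visits $X_{j-1}$. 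Thus not returning to $X_{j-1}$ forces $\dist{X_t}{o} \ge \dist{X_j}{o}$ for all $t>j$. This gives the decomposition for $k=1$ with $B_j := \{\degr{T_j}{X_j}=1\} \cap \{\dist{X_s}{o} < \dist{X_j}{o}\ \forall s < j\} \in \mathcal{F}_j$; absorbing the degree condition out of $B_j$ to match the stated form is cosmetic.

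For general $k$, I would argue by induction using \eqref{def:renewk}: $\{\renew{k} = j\} = \bigcup_{i < j} \{\renew{k-1} = i\} \cap \{\renew{1}\circ\theta_i = j - i\}$, and by the induction hypothesis $\{\renew{k-1}=i\} \in \mathcal{F}_i \subseteq \mathcal{F}_{j}$ (more precisely it has the same shape with its own $B_i$), while $\{\renew{1}\circ\theta_i = j-i\}$ decomposes by the $k=1$ case applied to the shifted process. One checks that the ``future'' part of every term is exactly $\{R_{X_{j-1},j} = \infty\}$ — the later renewal times only ever add \emph{more} $\mathcal{F}_j$-measurable constraints plus the \emph{same} no-return-to-$X_{j-1}$ requirement — and the $\mathcal{F}_j$-measurable parts can be unioned over $i$ into a single $B_j \in \mathcal{F}_j$. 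The main obstacle I anticipate is making the geometric equivalence of the second paragraph fully rigorous: one has to argue carefully that the dynamics (leaves attached only at the current vertex) really do keep the region below $X_j$ separated from the root except via $X_{j-1}$ until $X_{j-1}$ is revisited, and that the backtracking condition in \eqref{def:renew} correctly pins down $X_{j-1}$ as the father. Everything else is bookkeeping with $\sigma$-algebras and the shift operator.
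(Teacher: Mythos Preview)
Your approach is correct and matches the paper's: establish the $k=1$ case by sorting the defining conditions of $\renew{1}$ into an $\mathcal{F}_j$-measurable part plus the no-return condition via the tree-structure equivalence you describe, then induct by writing $\{\renew{k}=j\}=\bigcup_{i<j}\{\renew{k-1}=i\}\cap\{\renew{1}\circ\theta_i=j-i\}$. The one point the paper makes fully explicit, and which you only gesture at, is the identity $\{R_{X_{i-1},i}=\infty\}\cap\{R_{X_{j-1},j}=\infty\}=\{R_{X_{i-1},i}>j\}\cap\{R_{X_{j-1},j}=\infty\}$ on the relevant event (again by the tree structure: not returning to $X_{j-1}$ after $j$ forces not returning to the higher vertex $X_{i-1}$ after $j$), which is exactly what converts the earlier ``future'' condition into an $\mathcal{F}_j$-measurable one.
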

\begin{proof} The proof follows by induction on $k$. We start with $\renew{1}$. Figure \ref{fig:twotaus} will serve as a support for the argument.
\begin{figure}[h]
    \centering
    \includegraphics[width=0.4\linewidth]{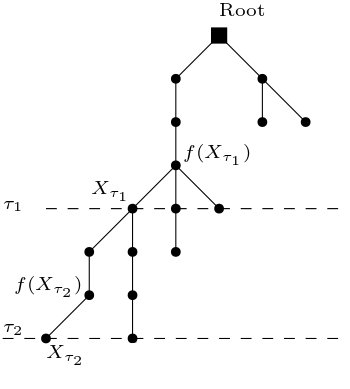}
    \caption{Two regeneration times.}
    \label{fig:twotaus}
\end{figure}

To see that the lemma holds for $\renew{1}$, notice that in order for $\renew{1}$ to be equal to~$j$, $X$ reaches a certain level below its initial position $x$ for the first time at time $j$ by jumping to a leaf. Before time $j$ either $X$ has reached a level below $x$ but not jumping to a leaf or it reached a further level jumping to a leaf but eventually visited the father of that leaf. In the later case, due to the tree structure, if $\renew{1} = j$, $X$ must visit the father of all leafs it visited before time $j$ also before time $j$, otherwise, $X$ would be forced to visit $f(X_{j})$ and then $\renew{1} > j$, see Figure \ref{fig:twotaus}. This means that for any $j \in \mathbb{N}$, we have that
\begin{equation}\label{eq:decompr1}
\{\renew{1} = j \} = B_{j} \cap \{\degr{T_j}{X_j} = 1\} \cap \{ R_{X_{j-1},j} = \infty \}. 
\end{equation}
To avoid unnecessary clutter with the notation, we will show how to extend the result from $\renew{1}$ to $\renew{2}$. Fix $i,j \in \mathbb{N}$, with $i < j$. Using \eqref{eq:decompr1}, we have that 
$$
\{\renew{1} = i, \renew{2} = j\} = B_{i}\cap\{\degr{T_i}{X_i} = 1, R_{X_{i-1},i} = \infty ,\renew{2} = j\}.
$$
Now, notice that in order for $\renew{2}$ to be equal to $j$ knowing that $\renew{1}=i$, either $X$ visits some leaf between times $i+1$ and $j-1$ or not. But since after time $j$, $X$ does not visits $f(X_j)$ any more, all visits to father of leafs visited between steps $i+1$ and $j-1$ must occur in the same time window, see Figure \ref{fig:twotaus}. Thus we can rewrite the event $\{\renew{1} = i, \renew{2} = j\}$ as
$$
 B_{i}\cap\{\degr{T_i}{X_i} = 1, R_{X_{i-1},i} = \infty \} \cap B_{j} \cap\{\degr{T_j}{X_j} = 1, R_{X_{j-1},j} = \infty \}.
$$
Finally, notice that the following identity of events holds
\begin{align*}
    & \{ R_{X_{i-1},i} = \infty, \degr{T_j}{X_j} = 1, R_{X_{j-1},j} = \infty \}  & \\
      & \quad = \{ R_{X_{i-1},i} > j, \degr{T_j}{X_j} = 1, R_{X_{j-1},j} = \infty \}        
\end{align*}
due to the tree structure of $T$ and the fact that $i < j$, again see Figure \ref{fig:twotaus}. Noting that the event $\{ R_{X_{i-1},i} > j\}$ is $\mathcal{F}_j$-measurable, we finally obtain 
$$
\{\renew{1} = i, \renew{2} = j\} = B'_{i,j}\cap \{\degr{T_j}{X_j} = 1, R_{X_{j-1},j} = \infty \},
$$
where $B'_{i,j}$ depends on $i$ but is $\mathcal{F}_j$-measurable since $i$ must be smaller than $j$. Taking the union over $i$ smaller than $j$, we show that the lemma holds for $\renew{2}$ as well. The general case is done by induction.
\end{proof}
The next result is the core of the renewal structure and can be seen as the renewal theorem itself as will become clear later. 
\begin{proposition}\label{p:renew} Consider a $Q$-TBRW with $Q\in \mathcal{Q}_\kappa$, for some positive $\kappa$, and starting from a finite initial state $(T_0,x_0)$. Then, the following holds $P_{T_0,x_0; Q}$-almost surely for all natural $k$:
\begin{enumerate}[label = (\alph*)]
    \item $\renew{k}$ is finite;
    \item 
    \begin{equation*}
        \begin{split}
             P_{T_0,x_0; Q} \left( \Lbrace \left( T_{t+\renew{k}}(f(X_{\renew{k}})), X_{t+\renew{k}} \right)\Rbrace_{t\ge 0} \in  \cdot \middle | \mathcal{G}_k \right) \\
             = P_{Q}  \left( \Lbrace \left( T_{t}, X_{t} \right)\Rbrace_{t\ge 0} \in  \cdot \; \middle | \; H_o = \infty \right).
        \end{split}
    \end{equation*}    
 
\end{enumerate}

\end{proposition}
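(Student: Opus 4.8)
\textbf{Proof strategy for Proposition \ref{p:renew}.}

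The plan is to prove parts (a) and (b) simultaneously by induction on $k$, since the finiteness of $\renew{k}$ and the conditional-law statement are intertwined: once we know $\renew{k}$ is a.s.\ finite and that the post-$\renew{k}$ process is a $Q$-TBRW from an edge conditioned on $\{H_o=\infty\}$, the fact that $\renew{1}$ is a.s.\ finite under that conditioned law (which follows from Theorem \ref{t:gtailrenew}, giving $P_Q(\renew{1}<\infty)=1$, together with $P_Q(H_o=\infty)>0$ by transience) yields finiteness of $\renew{k+1}$ and lets us iterate. So the base case $k=0$ is trivial ($\renew{0}=0$), and the inductive step is where the work lies.

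For the inductive step, fix $k$ and condition on $\mathcal{G}_k$. The key structural input is Lemma \ref{l:decomp}: on $\{\renew{k}=j\}$ we have the decomposition into an $\mathcal{F}_j$-measurable event $B_j$, the event $\{\degr{T_j}{X_j}=1\}$, and the event $\{R_{X_{j-1},j}=\infty\}$ that the walker never returns to the father $f(X_j)=X_{j-1}$. The first two of these are $\mathcal{G}_k$-measurable (indeed $\mathcal{F}_j$-measurable on $\{\renew{k}=j\}$), while the non-return event $\{R_{X_{j-1},j}=\infty\}$ is the crucial "future" constraint. I would decompose $P_{T_0,x_0;Q}(\,\cdot\,|\,\mathcal{G}_k)$ over $\{\renew{k}=j\}$ and apply the simple Markov property at time $j=\renew{k}$: conditionally on $\mathcal{F}_j$, the process $\{(T_{t+j},X_{t+j})\}_{t\ge 0}$ is a $Q$-TBRW started from $(T_j,X_j)$ with $\degr{T_j}{X_j}=1$, i.e.\ with the walker at a leaf. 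The point of passing to the pruned tree $T_{t+\renew{k}}(f(X_{\renew{k}}))$ rather than $T_{t+\renew{k}}$ is precisely that, on the non-return event, the walker never sees any of the branches hanging off $f(X_{\renew{k}})$ other than the one through $X_{\renew{k}}$, so from the walker's perspective the environment it actually explores is exactly that of a TBRW started from the edge $(\{o,x\},x)$ — the extra branches are invisible and can be deleted without changing the law of what the walker does. This "invisible branches" observation is the same mechanism used in the coupling proofs of Lemmas \ref{l:return} and \ref{l:rtime3}, and I would invoke it here: the map sending the full post-$\renew{k}$ configuration to $\{(T_{t+\renew{k}}(f(X_{\renew{k}})),X_{t+\renew{k}})\}_{t\ge 0}$ pushes the $Q$-TBRW from $(T_j,X_j)$ forward to a $Q$-TBRW from $(\{o,x\},x)$, and the event $\{R_{X_{j-1},j}=\infty\}$ pushes forward to $\{H_o=\infty\}$.

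Putting these together: on $\{\renew{k}=j\}$,
\begin{equation*}
P_{T_0,x_0;Q}\!\left(\Lbrace (T_{t+j}(f(X_j)),X_{t+j})\Rbrace_{t\ge0}\in\cdot,\ R_{X_{j-1},j}=\infty \,\middle|\,\mathcal{F}_j\right)
= P_Q\!\left(\Lbrace(T_t,X_t)\Rbrace_{t\ge0}\in\cdot,\ H_o=\infty\right),
\end{equation*}
by the Markov property and the pruning identification. Dividing by $P_Q(H_o=\infty)$ (positive by transience) converts the joint statement into the conditional law $P_Q(\,\cdot\,|\,H_o=\infty)$, and one must check that conditioning further on the $\mathcal{G}_k$-measurable data $B_j$ and $\{\degr{T_j}{X_j}=1\}$ does not alter this — it does not, because by Lemma \ref{l:decomp} the event $\{\renew{k}=j\}$ is exactly $B_j\cap\{\degr{T_j}{X_j}=1\}\cap\{R_{X_{j-1},j}=\infty\}$, so conditioning on $\mathcal{G}_k$ and on $\{\renew{k}=j\}$ amounts to conditioning the post-$j$ process on the non-return event alone, the rest being $\mathcal{F}_j$-measurable and absorbed into the regular conditional probability. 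Summing over $j$ gives (b), and finiteness of $\renew{k+1}=\renew{k}+\renew{1}\circ\theta_{\renew{k}}$ follows since $\renew{1}<\infty$ $P_Q(\,\cdot\,|\,H_o=\infty)$-a.s.\ by Theorem \ref{t:gtailrenew}.

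\textbf{Main obstacle.} The delicate point is not any single estimate but making the "pruning / invisible branches" step fully rigorous: one must argue that the extra branches hanging off $f(X_{\renew{k}})$ genuinely play no role on the non-return event, i.e.\ that the joint law of $(\{(T_{t+\renew{k}}(f(X_{\renew{k}})),X_{t+\renew{k}})\}_t,\ \mathbf 1\{R=\infty\})$ depends on $(T_{\renew{k}},X_{\renew{k}})$ only through the pruned subtree $T_{\renew{k}}(f(X_{\renew{k}}))$ and hence, when the walker sits at a leaf whose father has degree patterns matching the edge $\{o,x\}$, coincides with the corresponding law from $(\{o,x\},x)$. This requires a clean coupling argument (run the two TBRWs — one in the full tree, one in the pruned/edge tree — using the same leaf process and the same uniform random variables for the walk steps, and observe that on the non-return event the trajectories and the explored environments agree forever), which is exactly the style of argument already deployed for Lemmas \ref{l:return} and \ref{l:rtime3}; the remaining bookkeeping with $\mathcal{G}_k$, $B_j$ and the sum over $j$ is routine.
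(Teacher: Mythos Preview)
Your approach is essentially the paper's: induction on $k$, with Lemma \ref{l:decomp} supplying the decomposition $\{\renew{k}=j\}=B_j\cap\{\degr{T_j}{X_j}=1\}\cap\{R_{X_{j-1},j}=\infty\}$, followed by the simple Markov property at $j$ and the ``invisible branches'' identification of the pruned post-$j$ process with an edge-started $Q$-TBRW on the non-return event. The factoring over $A\in\mathcal{G}_k$ and the passage from (b) at level $k$ to (a) at level $k+1$ via $\renew{k+1}=\renew{k}+\renew{1}\circ\theta_{\renew{k}}$ are also the paper's moves.

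There is, however, a real gap at the base of your induction. Starting at $k=0$ does not work: statement (b) for $k=0$ would assert that the original process from $(T_0,x_0)$, pruned at $f(x_0)$, already has the edge-conditioned law, which is false for a generic finite initial state (and Lemma \ref{l:decomp} gives nothing for $\renew{0}$). What you actually need to launch the induction is (a) for $k=1$, i.e.\ $\renew{1}<\infty$ $P_{T_0,x_0;Q}$-a.s.\ from an \emph{arbitrary} finite initial state, and Theorem \ref{t:gtailrenew} does not give this: that theorem is stated only for the edge initial state $(\{o,x\},x)$, and some initial states $(T_0,x_0)$ are not reachable from the edge (the paper has a Remark making exactly this point). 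The paper fills this by invoking transience/ballisticity from \cite{IRVZ22} directly: from any finite start the walker must eventually reach fresh levels through leaves, and each such time it has a uniformly positive chance of never visiting the father again, which forces $\renew{1}<\infty$. Once you plug that argument in as the genuine base case $k=1$, the rest of your sketch matches the paper.
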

\begin{proof} The proof will follow by induction on $k$. We will first show that $\renew{k}$ is finite almost surely, then we show the identity given at $(b)$.

The case $k=1$ is done already. Theorem 1.3 of \cite{IRVZ22} guarantees that the TBRW is ballistic, consequently transient, for any finite initial state $(T_0,x_0)$. Since our initial state is finite, after a certain time, the walker will reach new levels by jumping to a leaf, because it must construct a new structure to reach distances larger than the height of $T_0$. Being transient, every time the walker reaches a new level through a leaf, it has a bounded away from zero chance of never returning to the father of that leaf. If it does return, we can shift the process and use Theorem 1.3 again and wait until it reaches a futher level jumping to a leaf, when it has the same chance to not return to the father of that leaf. This is enough to guarantee that~$\tau_1$ is finite $\Pl{T_0,x_0; Q}$-almost surely. 
\begin{remark}
    The reader might be questioning themselves why the finiteness of $\renew{1}$ is not an direct consequence of Theorem \ref{t:gtailrenew}. Notice that in that theorem, we obtained a bound for the tail of $\renew{1}$ when the process starts from an edge. Thus, one should expect that we can transfer it to any initial state $(T_0,x_0)$. However, $(T_0,x_0)$ might be a state which is not accessible by the TBRW when it starts from an edge. For instance, consider $\xi_n \sim {\rm Ber}(1)$ for all $n$. When the TBRW starts from an edge, it is impossible for it to reach a pair $(T,x)$ which is a star centered in $x$, because the walker always adds a new vertex when it steps on a vertex. This way, we cannot leverage Theorem \ref{t:gtailrenew} when we start from this initial state. Of course we expect the result also to be true for any finite initial state at the cost of constants depending on the initial state too, but for our purposes, we won't need this degree of generality.
\end{remark} 
Now, suppose that we have successfully showed that $\renew{i}$ is finite almost surely for~$i\le k$. We will then show that $(b)$ holds for $\renew{k}$ and use it to show that this implies that $\renew{k+1}$ is $\Pl{T_0,x_0; Q}$-almost surely finite as well.\\

\noindent \underline{Showing (b) for $\renew{k}$.} To avoid clutter, we begin introducing the following shorthand
\begin{equation}\label{def:yt}
    Y_t(f(x)) := \left( T_{t}(f(x)), X_{t} \right).
\end{equation}
For the particular case $x = o$, we write $Y_t$ instead of $Y_t(f(o))$. Now, fix $A \in \mathcal{G}_{k}$ and notice that for each $j \in \mathbb{N}$, there exists $A_j \in \mathcal{F}_j$ such that
$$
A\cap \{\renew{k} = j \} = A_j\cap \{\renew{k} = j \}.
$$
Thus, 
\begin{equation}\label{eq:psum}
    \begin{split}
       P_{T_0,x_0; Q} \left( \Lbrace Y_{t+\renew{k}}(f(X_{\renew{k}})) \Rbrace_{t \ge 0}\in \cdot, A \right) \\ = \sum_{j}P_{T_0,x_0; Q} \left( \Lbrace Y_{t+\renew{k}}(f(X_{\renew{k}})) \Rbrace_{t \ge 0}\in \cdot, A_k, \renew{k} = j \right).
    \end{split}
\end{equation}
By Lemma \ref{l:decomp}, for each $j \in \mathbb{N}$ there exists $B_{j} \in \mathcal{F}_{j}$ such that
\begin{equation}\label{eq:decomptk}
    \{\renew{k} = j \} =  B_{j}\cap \{\degr{T_j}{X_j} = 1, R_{X_{j-1},j} = \infty \}.
\end{equation}
Writing $C_{i,j} = A_j \cap B_{j} \cap \{\degr{T_j}{X_j} = 1,X_j = x_i\}$, we have that 
\begin{equation}
    \mathds{1}_{\Lbrace \Lbrace Y_{t+j}(f(x_i)) \Rbrace_{t \ge 0}\in \cdot, R_{f(x_i),j} = \infty, C_{i,j}\Rbrace} = \mathds{1}_{\{\Lbrace Y_{t}(f(x_i)) \Rbrace_{t \ge 0}\in \cdot, H_{f(x_i)} = \infty\}} \circ \theta_j \cdot \mathds{1}_{C_{i,j}},
\end{equation}
$P_{T_0,x_0; Q} $-almost surely. Observing that $C_{i,j}$ is $\mathcal{F}_j$-measurable, by Simple Markov property, it follows that $P_{T_0,x_0; Q} \left(  \Lbrace Y_{t+j}(f(x_i)) \Rbrace_{t \ge 0}\in \cdot, R_{f(x_i),j} = \infty,C_{i,j}\right)$ equals
\begin{equation}\label{eq:ckj}
    \begin{split}
         E_{T_0,x_0; Q} \left[ P_{T_j,X_j; Q} \left[\Lbrace Y_{t}(f(x_i)) \Rbrace_{t \ge 0}\in \cdot, H_{f(x_i)} = \infty\right] \mathds{1}_{C_{i,j}} \right].
    \end{split}
\end{equation}
Notice that on the event $C_{i,j}$, we have that 
\begin{equation}
    P_{T_j,X_j; Q} \left[\Lbrace Y_{t}(f(x_i)) \Rbrace_{t \ge 0}\in \cdot, H_{f(x_i)} = \infty\}\right] = P_{ Q} \left( \Lbrace Y_{t} \Rbrace_{t \ge 0}\in \cdot, H_{o} = \infty\right),
\end{equation}
since the the leaf process $\xi$ is i.i.d., and on $C_{i,j}$ we have $X_j = x_i$ and  $\degr{T_j}{X_j} = 1$. This means on the event $C_{i,j}$, in both sides of the above identity, the process starts from a leaf. Moreover, in the event inside the probability on the LHS of the above identity, the walker does not walk over portions of $T_j$ beyond $f(x_i)$. Thus what we have is a $Q$-TBRW starting from a single edge that never visits the other tip of it. 

Figure \ref{fig:taukj} below helps us to see that starting from $(T_j,X_j)$ the evolution of $(T_t(f(x_i),X_t)$ can be coupled to a TBRW that starts from an edge, as long as the TBRW starting from $(T_j,X_j)$ does not visit the vertices in black. Notice that we need here that the leaf process is i.i.d.
\begin{figure}[h]
    \centering
    \includegraphics[width=0.3\linewidth]{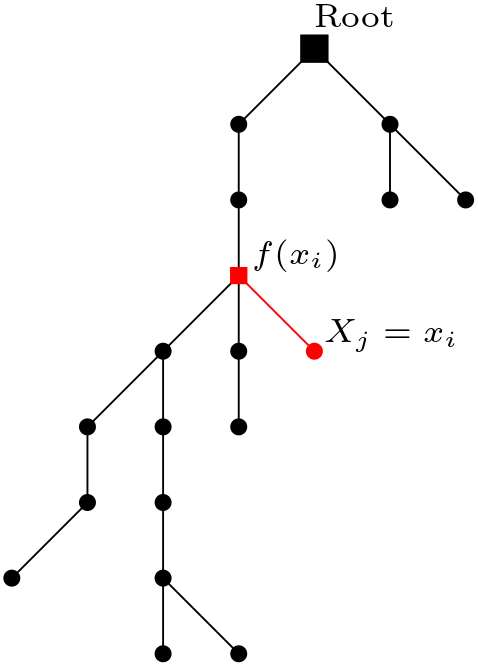}
    \caption{$\renew{k} = j$}
    \label{fig:taukj}
\end{figure}

This implies that $P_{T_0,x_0; Q} \left(  \Lbrace Y_{t+j}(f(x_i)) \Rbrace_{t \ge 0}\in \cdot,C_{i,j}, R_{f(x_i),j} = \infty\right)$ equals
$$
P_{Q} \left( \Lbrace Y_{t} \Rbrace_{t \ge 0}\in \cdot, H_{o} = \infty\right)\cdot P_{T_0,x_0; Q} (C_{i,j}).
$$
Summing over $i$, returning to \eqref{eq:psum} and recalling \eqref{eq:decomptk}, it follows that
\begin{equation}\label{eq:prod}
    \begin{split}
        P_{T_0,x_0; Q} \left( \Lbrace Y_{t+\renew{k}}(f(X_{\renew{k}})) \Rbrace_{t \ge 0}\in \cdot, A \right) \\
        = P_{Q} \left( \Lbrace Y_{t} \Rbrace_{t \ge 0}\in \cdot, H_{o} = \infty\right) \sum_j P_{T_0,x_0; Q} (A_j, B_{j}, \degr{T_j}{X_j} = 1).        
    \end{split}
\end{equation}
On the other hand, using the Simple Markov property backwards, it follows that
\begin{equation*}
    \begin{split}
        P_{ Q} (H_o = \infty)P_{T_0,x_0; Q} (A_j,B_{j},\degr{T_j}{X_j} = 1) = P_{T_0,x_0; Q} (A,\renew{k} = j) .
    \end{split}
\end{equation*}
Plugging the above identity into \eqref{eq:prod}, summing over $j$ and using our inductive hypothesis that $\renew{k}$ is finite almost surely, yields
\begin{equation*}
    P_{T_0,x_0; Q} \left( \Lbrace Y_{t+\renew{k}}(f(X_{\renew{k}})) \Rbrace_{t \ge 0}\in \cdot, A \right) = P_{ Q} \left( \Lbrace Y_{t} \Rbrace_{t \ge 0}\in \cdot \; \middle | \; H_{o} = \infty\right)P_{T_0,x_0; Q} \left( A \right),
\end{equation*}
which is equivalent to show $(b)$ for $\renew{k}$.\\

\noindent \underline{Showing $\renew{k+1}$ is finite.} The key observation for this step is the identity below
\begin{equation}\label{eq:tk1}
    \renew{k+1}\left( \Lbrace \left( T_n,X_n\right)\Rbrace_n \right)= \renew{1}\left( \Lbrace \left( T_{n+\renew{k}}(f(X_{\renew{k}})),X_{n+\renew{k}}\right)\Rbrace_n \right) + \renew{k}\left( \Lbrace \left( T_n,X_n\right)\Rbrace_n \right).
\end{equation}
That is, when the process is shifted by $\renew{k}$, we know that $X$ does not visit $f(X_{\renew{k}})$ again. Consequently, in terms of regeneration after $\renew{k}$, we can prone the trees at $f(X_{\renew{k}})$ since the process will not see portions of the environment beyond $X_{\renew{k}}$, being restricted to what it is being built on top of  $T_{\renew{k}}(f(X_{\renew{k}}))$. 

Moreover, by induction, it is enough to show that $\renew{1}\left( \Lbrace \left( T_{n+\renew{k}}(f(X_{\renew{k}})),X_{n+\renew{k}}\right)\Rbrace_n \right)$ is finite. In order to do so, we use $(b)$, which gives us that $\Lbrace \left( T_{n+\renew{k}}(f(X_{\renew{k}})),X_{n+\renew{k}}\right)\Rbrace_n$ is distributed as $\Lbrace \left( T_{n},X_{n}\right)\Rbrace_n$ conditioned on $\{H_o = \infty \}$. This implies that the random time $\renew{1}\left( \Lbrace \left( T_{n+\renew{k}}(f(X_{\renew{k}})),X_{n+\renew{k}}\right)\Rbrace_n \right)$ is distributed as $\renew{1}$ conditioned on $\{H_o = \infty\}$. Since $\renew{1}$ is finite almost surely, it follows that $\renew{k+1}$ is finite almost surely as well, which concludes the proof.
\end{proof}
Once we have Proposition \ref{p:renew}, the proof of Theorem \ref{t:renew} is a mere observation.
\begin{proof}[Proof of Theorem \ref{t:renew}] The proof is a consequence of Proposition \ref{p:renew} combined with Equation \eqref{eq:tk1}. Firstly, recall that $\renew{k+1} - \renew{k} = \tau_1 \circ \theta_{\tau_k}$. Moreover, observe that the value of $\tau_1$ when the process is shifted by $\tau_k$ does not depend on portions of tree constructed before $\renew{k}$, except from $f(X_{\renew{k}})$. That is,
$$
\renew{1} \circ \theta_{\renew{k}}\left( \Lbrace \left( T_{n},X_{n}\right)\Rbrace_n \right)  = \renew{1}\left( \Lbrace \left( T_{n+\renew{k}}(f(X_{\renew{k}})),X_{n+\renew{k}}\right)\Rbrace_n \right)
$$
This combined to Proposition \ref{p:renew}, implies that $\renew{k+1} - \renew{k}$, for $k\ge 1$, is distributed as $\renew{1}$ conditioned on $\{H_o = \infty\}$.
Same observation holding for $\dist{X_{\renew{k+1}}}{o} - \dist{X_{\renew{k}}}{o}$, with $k \ge 1$. The independence also follows immediately by conditioning on the proper $\mathcal{G}_j$.
    
\end{proof}

\section{Limit Theorems: Proof of Theorems \ref{t:lln}, \ref{t:lil} and \ref{t:clts}}\label{s:proofthm1}
Theorems \ref{t:lln}, \ref{t:lil} and \ref{t:clts} are consequence of our renewal structure given by Theorem \ref{t:renew}, the very light tail of $\renew{1}$ given by Theorem \ref{t:gtailrenew} and Theorem 1 in \cite{GW93}.

In \cite{GW93} the authors address the question of existence of limit theorems for cumulative processes associated to regenerative processes in a general setup. They establish necessary and sufficient conditions for the cumulative process (the distance process in our context) to satisfy limit theorems. Thus, in order to show Theorems \ref{t:lln}, \ref{t:lil} and \ref{t:clts}, we need to show that conditions given by Theorem 1 in \cite{GW93} are satisfied.

To facilitate the application of their results, we provide Table \ref{tab:notation} below which contains our notation under their framework. The reader will notice that the existence of limit theorems for the cumulative process involve conditions on the moments of $W_1(f)$ and $W_1(f_c)$, which in our context, is the maximum distance the walker reaches after $\renew{1}$ and before $\renew{2}$. Since the maximum of such excursion is bounded by the length of the interval $[\renew{1}, \renew{2}]$, our results guarantee that we have an extremely good control on the tail of $W_1$ and $W_1(f_c)$.
\begin{table}[h]
    \centering
    \begin{tabular}{ll}
    \hline
    \multicolumn{1}{|l|}{ Notation in \cite{GW93}} &   \multicolumn{1}{l|}{Our notation} \\ \hline
    \multicolumn{1}{|l|}{$X(s)$} & \multicolumn{1}{l|}{$(T_{\lfloor s \rfloor}, X_{\lfloor s \rfloor})$} \\ \hline
     \multicolumn{1}{|l|}{$f(X(s))$} & \multicolumn{1}{l|}{$\dist{X_{\lceil s \rceil}}{o} - \dist{X_{\lfloor s \rfloor}}{o}$} \\ \hline
     \multicolumn{1}{|l|}{$T(i)$} & \multicolumn{1}{l|}{$\renew{i+1}$} \\ \hline
      \multicolumn{1}{|l|}{$\tau_i$} & \multicolumn{1}{l|}{$\renew{i+1}-\renew{i}$} \\ \hline
      \multicolumn{1}{|l|}{$C(t)$} & \multicolumn{1}{l|}{$\dist{X_{\lceil t \rceil}}{o} - \dist{X_0}{o}$} \\ \hline
      \multicolumn{1}{|l|}{$Y_i(f)$} & \multicolumn{1}{l|}{$\dist{X_{\renew{i+1}}}{o} - \dist{X_{\renew{i}}}{o}$} \\ \hline
      \multicolumn{1}{|l|}{$\alpha$} & \multicolumn{1}{l|}{$v = v(Q)$} \\ \hline
      \multicolumn{1}{|l|}{$\beta$} & \multicolumn{1}{l|}{$\sigma^2 = \sigma^2(Q)$} \\ \hline
      \multicolumn{1}{|l|}{$Y_i(f_c)$} & \multicolumn{1}{l|}{$\dist{X_{\renew{i+1}}}{o} - \dist{X_{\renew{i}}}{o} - v\cdot(\renew{i+1} - \renew{i})$} \\ \hline
      \multicolumn{1}{|l|}{$W_i(f)$} & \multicolumn{1}{l|}{$\sup_{0\le s \le \renew{2} - \renew{1}}|\dist{X_{\lceil \renew{i} + s\rceil}}{o} - \dist{X_{\renew{i}}}{o}|$} \\ \hline
      \multicolumn{1}{|l|}{$S_n$} & \multicolumn{1}{l|}{$\dist{X_{ \renew{n+1}}}{o} - \dist{X_{\renew{1}}}{o}$} \\ \hline
    \end{tabular}
    \caption{Our notation under Glynn and Whitt's framework.}
    \label{tab:notation}
\end{table}

We are now ready to prove the main results of this section.
\begin{proof}[Proof of Theorems \ref{t:lln}, \ref{t:lil} and \ref{t:clts}] We begin noticing that by triangle inequality it follows that 
\begin{equation}
    \begin{split}
        W_1(f_c) & \le v\cdot(\renew{2} - \renew{1}) +  \sup_{0\le s \le \renew{2} - \renew{1}}|\dist{X_{\lceil \renew{1} + s\rceil}}{o} - \dist{X_{\renew{1}}}{o}| \\ 
        & \le (1+v)(\renew{2} - \renew{1}).
    \end{split}
\end{equation}
Then the proof follows from Theorem 1 in \cite{GW93} and Theorems \ref{t:renew} and \ref{t:gtailrenew}. Since in this case all the moments of $W_1, W_1(f_c), Y_1$ and $Y_1(f_c)$ are finite  
\end{proof}

\section{The Speed is Continuous: Proof of Theorem \ref{t:1}}
We now move to the proof of Theorem \ref{t:1} which states that the speed of the TBRW is a continuous function on the space of probability distributions over $\mathbb{N}$ with the total variation distance. The proof will involve essentially all the results we have proved so far.
\begin{proof}[Proof of Theorem \ref{t:1}: The speed is continuous.] We begin recalling the expression for $v$. By Theorem \ref{t:lln}, fixed $\mu \in \pspace$, $v(\mu)$ is given by
\begin{equation}\label{eq:v}
    v(\mu) = \frac{E_{\mu}\left[ \; \dist{X_{\renew{1}}}{o} \; \middle | \;  H_o = \infty \right]}{E_{ \mu}\left[ \; \renew{1} \; \middle | \; H_o = \infty \right]} = \frac{E_{ \mu}\left[ \; \dist{X_{\renew{1}}}{o} \; ; \;  H_o = \infty \right]}{E_{\mu}\left[ \; \renew{1} \; ; \; H_o = \infty \right]}.
\end{equation}
We want to show that $v$ is continuous. In order to do that, we will show that the function $\mu \mapsto E_{\mu}\left[ \; \renew{1} \; ; \; H_o = \infty \right]$ is continuous. The continuity of the numerator in \eqref{eq:v} will follow similarly. 

Throughout the proof, $\{Q_j\}_j$ denotes a sequence of distributions in $\pspace$ which converges to $Q$ in the total variation distance. We will first assume that $Q \in \mathcal{Q}_\kappa$, for some $\kappa >0$. The case $Q = \delta_0$ will be handled separately. Notice that in the first case, we can assume w.l.o.g that the whole sequence together with $Q$ belongs to $\mathcal{Q}_\kappa$ for some possibly different $\kappa$. \\

\noindent \underline{\it Case 1. $Q\neq \delta_0$} As our first step, we will need a way to compare the difference between the regeneration time of two instances of the TBRW generated by $Q_j$ and $Q$. For each $j$ we will construct a coupling $\cPl_{Q_j,Q}$ between the two TBRW similar to the coupling construct in the proof of Lemma \ref{l:return}, for this reason we will skip some details.

We first couple the leaf processes $\{\xi_n^{(Q_j)}\}_j$ and $\{\xi_n^{(Q)}\}_j$. For each $n$, we couple $\xi_n^{(Q_j)}$ and $\xi_n^{(Q)}$ using the optimal coupling described in \cite{LevinPeresWilmer2006} at Section 4.2. Let $p$ be 
\begin{equation}\label{def:p}
   p_j := 1-\dtv{Q_j}{Q}
\end{equation}
and $\{Y_n\}_n$ an i.i.d Bernoulli sequence of parameter $p$. Using $Y_n$, we can couple $\xi_n^{(Q_j)}$ and $\xi_n^{(Q)}$ in way that $\xi_n^{(Q_j)} = \xi_n^{(Q)}$ if and only if $Y_n = 1$. Moreover, since we are coupling using an independent source of randomness provided by $Y_n$'s, the sequence $\{\xi_n^{(Q_j)}\}_j$ (resp. $\{\xi_n^{(Q)}\}_j$) is i.i.d.

Now, let $\zeta$ be the following stopping time
\begin{equation}
    \zeta_j := \inf \{ n >0 \; : \; Y_n = 0\}.
\end{equation}
In words, $\zeta_j$ is the first time that $\xi_n^{(Q_j)} \neq \xi_n^{(Q)}$. Notice that for each $j$, $\zeta_j \sim {\rm Geo}(\dtv{Q_j}{Q})$. Thus, up to time $\zeta_j$ we can simply move both TBRW's together on the same tree, since they start at the same position and at each step $n<\zeta_j$ add the same number of vertices to the tree. After $\zeta_j$ we can move the walkers independently and label their vertices in way that the vertices added by one process are not visible to other, as we did in the proof of Lemma \ref{l:return}.

Our next step is to control $|\renew{1}^{(Q_j)} - \renew{1}^{(Q)}|$. For any given $\varepsilon$, observe that the following identity of events holds true
\begin{equation}\label{eq:empty}
    \Lbrace |\renew{1}^{(Q_j)}\mathds{1}\{H_o^{(Q_j)} = \infty\} - \renew{1}^{(Q)}\mathds{1}\{H_o^{(Q)} = \infty\}|>\varepsilon, \; \renew{1}^{(Q_j)} \vee  \renew{1}^{(Q)}  < \zeta_j \Rbrace = \emptyset.
\end{equation}
Indeed, suppose towards a contradiction and without loss of generality that $\renew{1}^{(Q_j)} = n_1$ and $\renew{1}^{(Q)} = n_2$ with $n_1 < n_2 < \zeta_j$. Because both processes evolve together until $\zeta_j$, we must have that $X^{(Q)}_{n_2}$ belongs to subtree of $X^{(Q_j)}_{n_1}$. Moreover, in order for $\renew{1}^{(Q)} = n_2 > n_1$, $X^{(Q)}$ must visit the father of $X^{(Q)}_{n_1}$ after $\zeta_j$. However, this implies visiting the father of $X^{(Q)}_{n_2}$ after $\zeta_j$, which by its turn implies that $\renew{1}^{(Q)} > \zeta_j$. Thus, on the event $\{\renew{1}^{(Q_j)} \vee  \renew{1}^{(Q)}  < \zeta_j \}$, we have that $\renew{1}^{(Q_j)} = \renew{1}^{(Q)}$. 

Now, suppose w.l.o.g that $H_o^{(Q_j)} = \infty$ and $H_o^{(Q)} < \infty$. Again, since the walkers walk together until $\zeta_j$, in this case $X^{(Q)}$ must visit the root after $\zeta_j$. However, this automatically implies that $\renew{1}^{(Q)} > \zeta_j$, since the $Q$-walker must visit $f(X^{(Q)}_{\renew{1}})$ in its way back to the root. This contradicts the assumption that $\renew{1}^{(Q)} < \zeta_j$.

Calling $Z_j = \renew{1}^{(Q_j)}\mathds{1}\{H_o^{(Q_j)} = \infty\}$ and $Z = \renew{1}^{(Q)}\mathds{1}\{H_o^{(Q)} = \infty\}$ to avoid clutter, \eqref{eq:empty} and union bound leads us to the following upper bound
\begin{equation}\label{ineq:0}
    \begin{split}
    \cPl_{Q_j,Q}\left( |Z_j - Z|>\varepsilon \right) \le
    \cPl_{Q_j,Q}\left( \renew{1}^{(Q)} > \zeta_j \right) + \cPl_{Q_j,Q}\left( \renew{1}^{(Q_j)} > \zeta_j \right).
    \end{split}
\end{equation}
We bound the second term of the RHS of the above inequality first. Again by union bound,
\begin{equation}\label{ineq:1}
    \begin{split}
        \cPl_{Q_j,Q}\left( \renew{1}^{(Q_j)} > \zeta_j \right) & \le\cPl_{Q_j,Q}\left( \zeta_j \le \frac{1}{\sqrt{\dtv{Q_j}{Q}}} \right) \\
        & \quad  + \cPl_{Q_j,Q}\left( \renew{1}^{(Q_j)} > \frac{1}{\sqrt{\dtv{Q_j}{Q}}}  \right).
    \end{split}
\end{equation}
Recall that by the definition of $\zeta_j$, it follows a geometric distribution of parameter~$\dtv{Q_j}{Q}$. Thus, combining this with the fact $1-x \ge e^{-3x/2}$, for $x<1/2$, and that $1-x \le e^{-x}$, yields
\begin{equation}\label{ineq:2}
    \cPl_{Q_j,Q}\left( \zeta_j \le \frac{1}{\sqrt{\dtv{Q_j}{Q}}} \right) \le 3\sqrt{\dtv{Q_j}{Q}}/2.
\end{equation}
As for the second term of the RHS of \eqref{ineq:1}, we apply Theorem \ref{t:gtailrenew}, which ensures the existence of two constants $C$ and $C'$, depending on $\kappa$ only, such that 
\begin{equation}
    \cPl_{Q_j,Q}\left( \renew{1}^{(Q_j)} > \frac{1}{\sqrt{\dtv{Q_j}{Q}}}  \right) \le Ce^{-C'/\sqrt{\dtv{Q_j}{Q}}}.
\end{equation}
Since the above bound depends only on $\kappa$, and \eqref{ineq:2} depends on $\dtv{Q_j}{Q}$ only, reproducing the same reasoning to $\cPl_{Q_j,Q}\left( \renew{1}^{(Q)} > \zeta_j \right)$ and returning to \eqref{ineq:0} yields
\begin{equation*}
    \begin{split}
         \cPl_{Q_j,Q}\left( |Z_j  - Z|>\varepsilon \right) \le 3\sqrt{\dtv{Q_j}{Q}} + 2Ce^{-C'/\sqrt{\dtv{Q_j}{Q}}},
    \end{split}
\end{equation*}
for all $\varepsilon>0$ and $j$. Using that the renewal times are integer numbers, it follows that 
\begin{equation}\label{ineq:probconv}
    \begin{split}
         \cPl_{Q_j,Q}\left( Z_j \neq Z \right)  \le 3\sqrt{\dtv{Q_j}{Q}} + 2Ce^{-C'/\sqrt{\dtv{Q_j}{Q}}}.
    \end{split}
\end{equation}
Now, notice that the uniform bound provided by Theorem \ref{t:gtailrenew} gives us that 
\begin{equation}
    \sup_{Q\in \mathcal{Q}_\kappa}\El{Q}[\renew{1}^2] = M < \infty.
\end{equation}
Thus, using the above bound, the triangle inequality and Cauchy-Schwarz inequality we deduce the following upper bound
\begin{equation*}
    \begin{split}
        \mathbb{E}_{Q_j,Q}\left[ \left | \renew{1}^{(Q_j)}\mathds{1}\{H^{(Q_j)}_o = \infty\} - \renew{1}^{(Q)}\mathds{1}\{H^{(Q)}_o = \infty\}\right |\right] \le 2M\cPl_{Q_j,Q}\left( Z_j\neq Z \right),
    \end{split}
\end{equation*}
which goes to zero as $j$ goes to infinity by virtue of \eqref{ineq:probconv}. Finally, by Jensen's inequality we can show that
$$
\left | E_{Q_j}\left[ \renew{1}\mathds{1}\{H_o = \infty\} \right] - E_Q\left[ \renew{1}\mathds{1}\{H_o = \infty\} \right]\right| \stackrel{j \to \infty}{\longrightarrow} 0,
$$
which shows that the denominator of $v$ in \eqref{eq:v} is continuous. To extend the result to the numerator, recall that on the event $\{\renew{1}^{(Q_j)} \vee  \renew{1}^{(Q)}  < \zeta_j \}$, $\renew{1}^{(Q_j)} = \renew{1}^{(Q)}$ and $\mathds{1}\{H_o^{(Q_j)} = \infty\} = \mathds{1}\{H_o^{(Q)} = \infty\}$. Thus, the probability of the event 
$$
\{\dist{X^{(Q_j)}_{\renew{1}}}{o}\mathds{1}\{H_o^{(Q_j)} = \infty\} \neq \dist{X^{(Q)}_{\renew{1}}}{o}\mathds{1}\{H_o^{(Q)} = \infty\} \}
$$
satisfies the same upper bound given in \eqref{ineq:0}. Also notice that $\dist{X_{\renew{1}}}{o} \le \renew{1}$. Thus the exact same argument works for the numerator. This shows that $v$ is continuous at any $Q \neq \delta_0$.
\\

\noindent \underline{\it Case 2. $Q = \delta_0$.} We conclude the proof showing that $v$ is also continuous at $Q = \delta_0$. The first observation we make is that $v(\delta_0) = 0$, since in this case the TBRW is walking over a fixed graph, consequently its distance from the root is bounded by the height of $T$. The second one is that, for any given distribution $\mu$ over $\mathbb{N}$ and time $n$ the following bound holds $\Pl{T,x;\mu}$-almost surely
\begin{equation*}
    \frac{\dist{X_n}{o}}{n} \le \frac{\dist{x}{o} + \sum_{k=1}^n \mathds{1}\{\xi_k \ge 1\}}{n}.
\end{equation*}
The equality holds when every time the walker adds at least one leaf, it jumps to one of the leaves right away.

By Theorem \ref{t:lln} and the Law of Large Numbers, it follows that for any $\mu$
\begin{equation}
    v(\mu) = \lim_{n\to \infty} \frac{\dist{X_n}{o}}{n} \le \mu(\{1,2,\dots,\}).
\end{equation}
On the other hand, if $\{Q_j\}_j$ is converging to $\delta_0$ in the total variation distance, then in particular $Q_j(\{1,2,\dots,\})$ converges to $\delta_0(\{1,2,\dots,\}) = 0$ as $j$ goes to infinity and this shows that $v(Q_j)$ converges to $v(\delta_0)$, which concludes the proof.
\end{proof}

\section{Non-identically distributed leaf processes}\label{s:noniid} 
In this section we investigate the speed when the identically distributed hypothesis on the leaf process is dropped. In this case, even under the uniformly elliptic condition \ref{def:UE}, the TBRW might not have a well-defined speed, that is, it might not satisfy a LLN. We will start showing the proposition below, which states that if the leaf process is not identically distributed but converges almost surely to a limit random variable $\xi_\infty$, then the speed at time $n$ converges has a (possibly random) limit speed. We then end the section constructing a ballistic TBRW which whose speed does not converge to any (random) limit speed.
\begin{proposition}[Limit speed for non-identically distributed leaf processes]\label{p:noiidlln} Consider a TBRW starting from a finite initial state $(T,x)$ with an independent leaf process $\xi = \{\xi_n\}_n$ that converges almost surely to $\xi_{\infty}$. Then, there exists a nonnegative random variable $S$ such that
$$
\lim_{n \to \infty} \frac{\dist{X_n}{o}}{n} = S, \; P_{T,x; Q}\text{-almost surely}.
$$
If $P(\xi_{\infty} \ge 1) = 1$, then $S$ is positive almost surely.
\end{proposition}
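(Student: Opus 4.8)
The plan is to reduce the statement to the i.i.d.\ law of large numbers, Theorem \ref{t:lln}, by exploiting that an $\mathbb{N}$-valued sequence can converge almost surely only by becoming \emph{eventually constant}. Concretely, since $\xi_n \to \xi_\infty$ almost surely and all the variables are supported on $\mathbb{N}$, on a full-probability event $\Omega_0$ there is a finite random index past which $\xi_n \equiv \xi_\infty$. I would record this as $\Omega_0 = \bigcup_{k\ge 0}\bigcup_{m\ge 1} D_{m,k}$ (up to a null set), where $D_{m,k} := \{\xi_n = k \text{ for all } n>m\}$; the events $D_{m,k}$ with distinct $k$ are disjoint, and $\{\xi_\infty = k\} = \bigcup_m D_{m,k}$.

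Next I would analyse the process on a fixed $D_{m,k}$. The key observation is that $D_{m,k}$ is $\sigma(\xi_{m+1},\xi_{m+2},\dots)$-measurable, hence --- by independence of the leaf process --- independent of $\mathcal{F}_m$. Therefore, conditionally on $\mathcal{F}_m$ and on $D_{m,k}$, the tail leaf process $\{\xi_{m+n}\}_{n\ge 1}$ is the deterministic constant sequence $(k,k,\dots)$, and by the Markov property (the Remark in \S\ref{sss:markov}) the shifted process $\{(T_{m+n},X_{m+n})\}_{n\ge 0}$ is a $\delta_k$-TBRW started from $(T_m,X_m)$. Here $(T_m,X_m)$ is almost surely a finite rooted tree with a marked vertex, since $T$ is finite and each $\xi_n$ is finite, so it is an admissible initial state for Theorem \ref{t:lln}. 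If $k\ge 1$ then $\delta_k\in\mathcal{Q}_1$, and Theorem \ref{t:lln} yields $\dist{X_{m+n}}{o}/n \to v(\delta_k)$ almost surely on $D_{m,k}$ (the convergence holds from every finite initial state, hence on $D_{m,k}$ after conditioning on $\mathcal{F}_m$); if $k=0$, then from time $m$ on the tree is frozen, so $\dist{X_{m+n}}{o}\le \height{T_m}<\infty$ and $\dist{X_{m+n}}{o}/n\to 0=v(\delta_0)$. Re-indexing by $j=m+n$ and using $n/(m+n)\to 1$, in all cases $\dist{X_j}{o}/j \to v(\delta_k)$ on $D_{m,k}$.

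Taking the union over $m$ and $k$ then shows that on $\Omega_0$ the limit $\lim_n \dist{X_n}{o}/n$ exists and equals $v(\delta_{\xi_\infty}) = \sum_{k\ge 0} v(\delta_k)\,\mathds{1}\{\xi_\infty=k\}$; I would set $S:=v(\delta_{\xi_\infty})$, which is nonnegative since each $v(\delta_k)\ge 0$. For the last assertion, if $P(\xi_\infty\ge 1)=1$ then almost surely $\xi_\infty=k$ for some $k\ge 1$, so $\delta_{\xi_\infty}\neq\delta_0$, and the ``moreover'' part of Theorem \ref{t:lln} (namely $v(Q)=0 \iff Q=\delta_0$) gives $S>0$ almost surely.

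The one point requiring care is the shift in the second paragraph: the time at which $\xi$ freezes is not a stopping time, so one cannot apply the strong Markov property there directly. The remedy is exactly the decomposition over the $D_{m,k}$ --- each is independent of $\mathcal{F}_m$, which lets me shift at the deterministic time $m$ while conditioning the tail leaf process to be constant. Everything else is routine bookkeeping with Theorem \ref{t:lln}.
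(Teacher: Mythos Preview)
Your proposal is correct and follows essentially the same approach as the paper: both exploit that an $\mathbb{N}$-valued sequence can converge only by eventually freezing, shift the process at the freezing time, and then apply Theorem~\ref{t:lln} to the resulting $\delta_k$-TBRW to identify $S=v(\delta_{\xi_\infty})$. The only cosmetic difference is that the paper introduces the random freezing time $N$ and isolates the shift as a separate ``strong Markov property for $N$'' lemma (proved, as you do, by decomposing over $\{N=j,\xi_\infty=k\}$ and applying the simple Markov property at the deterministic time $j$), whereas you work directly with the events $D_{m,k}$ and their independence from $\mathcal{F}_m$; the underlying mechanism is identical.
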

The proof will follow from a technical lemma stated below. But before we state it, let us make a few comments about the hypothesis we are working under and introduce some notation. We are assuming independence and that $\xi_n$ converges almost surely to $\xi_{\infty}$. Moreover, if $N$ is the following random time
\begin{equation}\label{def:N}
    N := \inf\{ n \ge 1 \; : \; \xi_m = \xi_\infty, \; \forall m \ge n \},
\end{equation}
then, by the fact that all random variables are discrete, $N$ is finite almost surely. 

For a fixed initial state $(T,x)$, we will make use of the notation $\widetilde{P}_{T,x; \xi_\infty}$ for the law of a TBRW whose sequence $\xi_n = \xi_\infty$ for all $n$. That is, in our first step, we sample a natural number $k$ according to $\xi_\infty$ and then we add $k$ new vertices at each step with probability $1$. In other words, conditioned on $\{\xi_\infty = k\}$, we have a TBRW with sequence $\xi_n \equiv k$ for all $n$. We introduce the notation $\widetilde{P}_{T,x; k}$ meaning that
$$
\widetilde{P}_{T,x; k}(\cdot) = \widetilde{P}_{T,x; \xi_\infty}( \; \cdot \; | \; \xi_\infty = k).
$$
We also let $\mathcal{G}$ be the $\sigma$-algebra defined as 
\begin{equation}
    \mathcal{G} := \sigma \left( \xi_\infty, N, \{(T_i,X_i)\}_{i=0}^N \right).
\end{equation}
That is, $\mathcal{G}$ contains the information given by $\xi_\infty, N$ and the trajectory of the TBRW with leaf process $\{\xi_n\}_n$ up to time $N$. The next lemma can be interpreted as a Strong Markov Property for the random time $N$.
\begin{lemma}\label{l:markovN}Consider a TBRW starting from $(T,x)$ with an independent leaf process $\xi = \{\xi_n\}_n$ which converges to $\xi_\infty$ almost surely. Then,
\begin{equation*}
    \Pl{T,x; \xi}\left( \{(T_{N+n},X_{N+n})\}_n \in \cdot  \; \middle | \;  \mathcal{G}\right) = \widetilde{P}_{T_N,X_N;\xi_\infty} (\{(T_n,X_n)\}_n \in \cdot ).
\end{equation*}
    
\end{lemma}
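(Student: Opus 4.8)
The plan is to condition on $\mathcal{G}$ and exploit the fact that, on the event $\{N = m\}$, the leaf process restricted to times $> m$ is \emph{deterministically} equal to the constant sequence $\xi_\infty$, so the shifted process is literally a TBRW with constant leaf value. Formally, fix a bounded measurable test function $\psi$ on trajectory space and a set $A \in \mathcal{G}$. Since $\mathcal{G}$ is generated by $\xi_\infty$, $N$, and $\{(T_i,X_i)\}_{i \le N}$, the event $A$ can be decomposed over the values of $N$: for each $m \in \mathbb{N}$ there is a set $A_m \in \mathcal{F}_m \vee \sigma(\xi_\infty)$ such that $A \cap \{N = m\} = A_m \cap \{N = m\}$. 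I would then write
\begin{equation*}
    \El{T,x;\xi}\left[ \psi\big( \{(T_{N+n}, X_{N+n})\}_n \big) \mathds{1}_A \right] = \sum_{m \ge 1} \El{T,x;\xi}\left[ \psi\big( \{(T_{m+n}, X_{m+n})\}_n \big) \mathds{1}_{A_m} \mathds{1}_{\{N = m\}} \right].
\end{equation*}

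The key observation is that the event $\{N = m\}$ is not $\mathcal{F}_m$-measurable — it depends on the future leaves $\xi_{m+1}, \xi_{m+2}, \dots$ — so one cannot apply the ordinary simple Markov property directly. The fix is to replace $\{N = m\}$ by the logically equivalent decomposition involving the future leaf process: on $\{N = m\}$ we have $\xi_n = \xi_\infty$ for all $n > m$, and $\xi_m \neq \xi_\infty$ if $m \ge 2$ (to make $N$ minimal; the $m=1$ case is the event $\{\xi_n = \xi_\infty\ \forall n\}$). I would split $\{N = m\} = \{N \ge m\} \cap \{\xi_n = \xi_\infty\ \forall n > m\}$ where $\{N \ge m\}$ depends only on $\xi_1,\dots,\xi_{m-1}$ and $\xi_\infty$ (hence is $\mathcal{F}_{m-1}\vee\sigma(\xi_\infty)$-measurable), while the second event concerns only leaves strictly after time $m$. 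Using independence of the leaf process and conditioning on $(T_m, X_m, \xi_\infty)$, the future evolution $\{(T_{m+n},X_{m+n})\}_n$ given $\mathcal{F}_m \vee \sigma(\xi_\infty)$ and the constraint $\{\xi_n = \xi_\infty\ \forall n > m\}$ is, by definition, distributed as $\widetilde{P}_{T_m, X_m; \xi_\infty}$ — this is exactly the measure where all subsequent leaf counts equal the value drawn for $\xi_\infty$. Pulling the resulting conditional expectation out and recombining the sum over $m$ against $\mathds{1}_A$ gives the claim.

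The main obstacle I anticipate is handling the measurability subtlety cleanly: $N$ is a random time that looks into the future of the leaf process, so it is \emph{not} a stopping time for the natural filtration, and the "Strong Markov Property" in the lemma's phrasing is really a last-exit / coupling-time decomposition rather than a genuine strong Markov property. One has to be careful that conditioning on $\{N = m\}$ (equivalently, on the tail of the leaf process being constant) does not bias the trajectory $\{(T_i,X_i)\}_{i \le m}$ in a way incompatible with the claimed product structure — this works precisely because, given $\xi_1,\dots,\xi_m$ (hence given $(T_m,X_m)$ and the first $m-1$ increments), the event $\{\xi_n = \xi_\infty\ \forall n > m\}$ and the future trajectory depend only on $\xi_{m+1}, \xi_{m+2},\dots$, which are independent of the past. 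I would also need the finiteness of $N$ (noted in the text, since all variables are discrete and $\xi_n \to \xi_\infty$ a.s.) to justify that $\sum_m \Pl{T,x;\xi}(N = m) = 1$ so no mass is lost in the decomposition. Once the bookkeeping is set up, the actual Markov step is routine given that $\{\xi_n\}_n$ is independent.
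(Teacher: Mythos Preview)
Your overall strategy is exactly the paper's: decompose over the values of $N$ (and of $\xi_\infty$), split $\{N=m\}$ into a past-measurable piece and a ``future leaves are constant'' piece, apply the simple Markov property at time $m$, and identify the conditional law with $\widetilde{P}_{T_m,X_m;\xi_\infty}$. The paper carries this out by fixing $\xi_\infty = k$ and writing $\{N=j,\,\xi_\infty=k\}$ as the intersection of an $\mathcal{F}_j$-measurable event with $\{\xi_{j+1}=\xi_{j+2}=\cdots=k\}$, then shifting.

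There is, however, a concrete slip in your decomposition. You write $\{N=m\}=\{N\ge m\}\cap\{\xi_n=\xi_\infty\ \forall n>m\}$ and claim that $\{N\ge m\}$ depends only on $\xi_1,\dots,\xi_{m-1}$ and $\xi_\infty$. Neither statement is correct: from the definition of $N$ one has $\{N\ge m\}=\{\exists\,n\ge m-1:\xi_n\neq\xi_\infty\}$, which depends on the \emph{entire future}, and your intersection actually equals $\{N\in\{m,m+1\}\}$ rather than $\{N=m\}$. The right split (after fixing $\xi_\infty=k$) is
\[
\{N=m,\ \xi_\infty=k\}=\{\xi_{m-1}\neq k\}\cap\{\xi_n=k\ \forall n\ge m\}\qquad(m\ge 2),
\]
where the first factor is $\sigma(\xi_{m-1})\subset\mathcal{F}_m$-measurable and the second, together with $\{\xi_m=k\}\in\mathcal{F}_m$, is the tail event you condition on. With this correction your argument goes through verbatim and coincides with the paper's.
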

\begin{proof} To avoid clutter, let us write $Y_n := (T_n,X_n)$ and $(y)_0^j$ for a generic deterministic vector whose $i$th entry is a deterministic pair $(G,v)$, where $G$ is a finite rooted tree and $v$ is one of its vertices. 

Now, let $A \in \mathcal{G}$ be an event of the form
\begin{equation}\label{def:A}
    A:= \Lbrace \xi_\infty = k, N = j, \{Y_i\}_{i=0}^N = (y)_0^j\Rbrace.
\end{equation}
And let $B_j$ and $C_j$ be the events
\begin{equation}\label{def:bj}
    B_j := \Lbrace \xi_j = \xi_{j+1} = \dots = k \Rbrace; \quad C_j := \Lbrace \exists i \le j, \xi_i \neq \xi_\infty \Rbrace. 
\end{equation}
Notice that $A$ can be rewritten as 
\begin{equation}\label{def:A2}
    A:= B_j \cap C_{j-1} \cap \Lbrace \{Y_i\}_{i=0}^j = (y)_0^j\Rbrace.
\end{equation}
Also notice that  $C_{j-1} \in \mathcal{F}_j$, where $\{\mathcal{F}_n\}_n$ is the canonical filtration for the TBRW with sequence  $\xi = \{\xi_n\}_n$. Then, by Simple Markov Property
\begin{equation}\label{eq:1}
    \begin{split}
        \Pl{T,x; \xi}\left( \{Y_{N+n}\}_n \in \cdot, A \right) &= \Pl{T,x; \xi}\left( \{Y_{j+n}\}_n \in \cdot, A \right) \\
        & = \El{T,x; \xi}\left[ \mathds{1}\{\{Y_{n}\}_n \in \cdot, B_1 \} \circ \theta_j \mathds{1}{\{C_{j-1}, \{Y_i\}_{i=0}^j = (y)_0^j\}}\right] \\
        & = \El{T,x; \xi} \left[\Pl{T_j,X_j; \xi^{(j)}}(\{Y_{n}\}_n \in \cdot, B_1)\mathds{1}{\{C_{j-1}, \{Y_i\}_{i=0}^j = (y)_0^j\}}\right] \\
        & = \El{T,x; \xi} \left[\Pl{y_j; \xi^{(j)}}(\{Y_{n}\}_n \in \cdot, B_1)\mathds{1}{\{C_{j-1}, \{Y_i\}_{i=0}^j = (y)_0^j\}}\right]
    \end{split}
\end{equation}
Now, notice that a TBRW conditioned on all $\xi_i = k$ is a TBRW which adds $k$ leaves at each step with probability $1$. Thus, 
\begin{equation}\label{eq:yb}
    \begin{split}
        \Pl{y_j; \xi^{(j)}}(\{Y_{n}\}_n \in \cdot, B_1) &= \Pl{y_j; \xi^{(j)}}(\{Y_{n}\}_n \in \cdot, \xi_1 = \xi_2 = \dots = k) \\
        &= \widetilde{P}_{y_j; k}(\{Y_{n}\}_n \in \cdot)\Pl{y_j; \xi^{(j)}}( \xi_1 = \xi_2 = \dots = k)\\
        & = \widetilde{P}_{y_j; k}(\{Y_{n}\}_n \in \cdot)\Pl{y_j; \xi^{(j)}}( B_1).
    \end{split}
\end{equation}
On the other hand, we can reverse the Simple Markov property as follows
\begin{equation}
    \begin{split}
        \Pl{y_j; \xi^{(j)}}( B_1)\mathds{1}\{C_{j-1}, \{Y_i\}_{i=0}^j = (y)_0^j\} & = \El{T,x; \xi}\left[ B_1 \circ \theta_j  \mathds{1}\{C_{j-1}, \{Y_i\}_{i=0}^j = (y)_0^j\} \middle | \mathcal{F}_j\right]
    \end{split}
\end{equation}
Then plugging the above and \eqref{eq:yb} into \eqref{eq:1}, gives us
$$
\Pl{T,x; \xi}\left( \{Y_{N+n}\}_n \in \cdot, A \right) = \widetilde{P}_{y_j; k}(\{Y_{n}\}_n \in \cdot)\Pl{T,x; \xi}(A).
$$
Notice that on $A$, we have $(T_N,X_N) = y_j$ and $\xi_\infty = k$. This is enough to show the lemma.
\end{proof}
Now we can finally prove the proposition.
\begin{proof}[Proof of Proposition \ref{p:noiidlln}] Let $\{(\widetilde{T}_n, \widetilde{X}_n)\}_n$ be a TBRW with law $\widetilde{P}_{T,x;\xi_\infty}$,  for a finite initial state $(T,x)$. By Theorem \ref{t:lln}, it follows that there exists a random variable $S$ such that
\begin{equation}\label{eq:llns}
    \lim_{n\to \infty} \frac{\dist{\widetilde{X}_n,o}}{n} = S,
\end{equation}
$\widetilde{P}_{T,x; \xi_\infty}$-almost surely. The random variable $S$ does not depend on $(T,x)$ and is defined as $v_k$ on $\{\xi_\infty = k\}$, where $v_k$ is the speed of a TBRW that adds $k$ vertices with probability $1$ at each step. Notice that $S=0$ only on the event $\{\xi_\infty = 0\}$.

Finally, to extend \eqref{eq:llns} to the TBRW with sequence $\{\xi_n\}_n$, we leverage the asymptotic nature of the speed, which gives us that
$$
\mathds{1}\Lbrace \lim_{n\to \infty} \frac{\dist{X_n,o}}{n} = S \Rbrace = \mathds{1}\Lbrace \lim_{n\to \infty} \frac{\dist{X_n,o}}{n} = S \Rbrace \circ \theta_N,
$$
$\Pl{T,x; \xi}$-almost surely. Thus, by Lemma \ref{l:markovN}, we have that 
\begin{equation*}
    \Pl{T,x; \xi} \left( \lim_{n\to \infty} \frac{\dist{X_n,o}}{n} = S \;  \middle | \; \mathcal{G}\right) = \widetilde{P}_{T_N,X_N; \xi_\infty} \left( \lim_{n\to \infty} \frac{\dist{\widetilde{X}_n,o}}{n} = S \right) \equiv 1,
\end{equation*}
since $N$ is finite almost surely and the convergence of the speed to $S$ occurs regardless the initial condition, as long as it is finite. Taking the expected value on both sides finishes the proof.
\end{proof}

\subsection{A Counter-example}\label{s:counterexamples}
We conclude the discussion about the non-identically distributed case with the construction of a ballistic TBRW whose speed at time $n$ does not converge to any (random) limit speed. The example illustrates that even under the hypothesis of independence of the sequence $\xi$, we do have relatively simple examples of the TBRW that can keep slowing down and speeding up infinitely often.

More formally, we will construct an independent sequence of random variables $\xi_* = \{\xi_n \}_n$ over the natural numbers such that a TBRW generated using $\xi_*$ is ballistic, meaning 
$$
    \liminf_{n \to \infty}\frac{\dist{X_n}{o}}{n} > 0,
$$
$\Pl{\xi}$-almost surely, but the limit does not exist. The idea behind our example is to construct a TBRW that keeps alternating between two different speeds. Towards this goal, our example will be a TBRW that in some time window behaves as a Bernoulli instance with parameter $p$, and in another time window it  behaves as a Bernoulli instance with another parameter $q$. The main technical difficulty here is the choice of the time window, which needs to be chosen with some uniformity in mind.

In order to construct our example, we first observe that, since the speed, $v(p)$, of the TBRW generated by an i.i.d Bernoulli sequence of parameter $p$, is continuous by Theorem \ref{t:1}, we can fix $p,q \in (0,1]$ such that $0<v(p) < v(q)$. To avoid clutter, we will introduce the following shorthand for the distance
\begin{equation}\label{def:dist}
    D_m := \dist{X_m}{o}.
\end{equation}
And we will fix a finite initial state $(T,x)$. The next step is to construct the leaf process $\xi_*$. The construction is done inductively. We start by putting $k_0 = 0$ and defining $k_1$ to be the following natural number
\begin{equation}
    k_1 := \inf\Lbrace n > k_0 \; : \; \left| \El{T,x;p}\left[\frac{D_m}{m}\right] - v(p) \right| < \frac{1}{2}, \forall m \ge n \Rbrace.
\end{equation}
Notice that Theorem \ref{t:lln} together with the Bounded Convergence Theorem imply that for any finite initial condition $(T,x)$,
$$
\lim_{m \to \infty }\El{T,x;p}\left[\frac{D_m}{m}\right] = v(p).
$$
This implies the existence of $k_1$. We then consider $k_1$ independent Bernoulli random variables of parameter $p$, $\xi_1,\xi_2,\dots, \xi_{k_1}$ as the first $k_1$ variables of our sequence~$\xi_*$. 

Having defined $\xi_1,\xi_2,\dots, \xi_{k_1}$, we can define the first $k_1$ steps of our TBRW starting from $(T,x)$ and using $\xi_1,\xi_2,\dots, \xi_{k_1}$ to add vertices. We will denote these $k_1$ steps by $\{(T^*_j,X_j^*)\}_{j=0}^{k_1}$. Now, we define $k_2$ as 
\begin{equation}
    k_2 := \inf \Lbrace n > 0 \; : \; \left| \El{T^*_{k_1},X^*_{k_1};q}\left[\frac{D_m}{m}\right] - v(q) \right| < \frac{1}{2\cdot2},  \forall m \ge n\Rbrace + 2k_1
\end{equation}
and set $\xi_{k_1+1} = \dots = \xi_{k_2} = {\rm Ber}(q)$. 

Before moving to the general step, let us say a few words about $k_2$. We highlight that  $\El{T^*_{k_1},X^*_{k_1};q}$ denotes the expectation of a Bernoulli instance of the TBRW with parameter $q$ starting from the random initial condition $(T^*_{k_1},X^*_{k_1})$. In other words,  $\El{T^*_{k_1},X^*_{k_1};q}\left[\frac{D_m}{m}\right]$ is actually a random variable. We first sample the initial state by letting our TBRW to run for $k_1$ steps, which is possible since we have constructed the first $k_1$ $\xi$'s. Then we simply run a Bernoulli instance of the TBRW with parameter $q$. Since we have finitely many possible states for $(T^*_{k_1},X^*_{k_1})$ and for any finite initial state, $D_m/m$ converges to $v(q)$ under these settings, $k_2$ is a finite number.

The general case is done by induction as follows. Assuming we have defined $\xi_n$ for any $n \le k_{j-1}$, if $j$ is even, we let $k_j$ be
\begin{equation}
     k_j := \inf \Lbrace n > 0 \; : \; \left| \El{T^*_{k_{j-1}},X^*_{k_{j-1}};q}\left[\frac{D_m}{m}\right] - v(q) \right| < \frac{1}{2j},  \forall m \ge n\Rbrace + jk_{j-1}
\end{equation}
and put $\xi_{k_{j-1}+1} = \dots = \xi_{k_j} = {\rm Ber}(q)$. If $j$ is odd, $k_j$ becomes
\begin{equation}
     k_j := \inf \Lbrace n > 0 \; : \; \left| \El{T^*_{k_{j-1}},X^*_{k_{j-1}};p}\left[\frac{D_m}{m}\right] - v(p) \right| < \frac{1}{2j},  \forall m \ge n\Rbrace + jk_{j-1}
\end{equation}
and we set $\xi_{k_{j-1}+1} = \dots = \xi_{k_j} = {\rm Ber}(p)$. This concludes the construction of $\xi_*$.

Now we are left with showing that a TBRW starting from $(T,x)$ and with sequence $\xi_*$ does not have a well-defined speed. Before we show how this is done, let us make some important observations about our TBRW generated by $\xi_*$ and the sequence $(k_j)_j$. Firstly, it follows by construction that 
\begin{equation}\label{eq:limkj}
    \lim_{j \to \infty }\frac{k_{j-1}}{k_j} = 0.
\end{equation}
Secondly, if we shift the process by $k_{j-1}$, with $j$ even, by the construction and Simple Markov proeprty, it follows that
\begin{equation}\label{eq:smp}
    \begin{split}
        \El{T,x; \xi_*}\left[D_{k_j} - D_{k_{j-1}} \right] & = \El{T,x; \xi^*}\left[\El{T^*_{k_{j-1}},X^*_{k_{j-1}}; \xi_*^{(k_{j-1})}}\left[D_{k_j - k_{j-1}} \right]\right] \\
        & =  \El{T,x; \xi^*}\left[\El{T^*_{k_{j-1}},X^*_{k_{j-1}}; q}\left[D_{k_j - k_{j-1}} \right]\right],
    \end{split}
\end{equation}
since by construction $\xi_{k_{j-1}+1} = \dots = \xi_{k_j} = {\rm Ber}(q)$, which implies that after shifting by $k_{j-1}$, by $k_j - k_{j-1}$ steps we observe a Bernoulli instance of the TBRW of parameter $q$ and initial state $(T^*_{k_{j-1}},X^*_{k_{j-1}})$. The exact same arguments holds for $j$ odd, replacing $q$ by $p$.

With all the above in mind, we will finally show that the process we have just constructed does not have a well-defined speed. In order to do that, suppose towards a contradiction that there exists a positive random variable $Y$ such that 
\begin{equation}
    \lim_{m \to \infty }\frac{D_m}{m} = Y,
\end{equation}
$\Pl{T,x;\xi^*}$-almost surely. Then, by the Bounded Convergence Theorem
\begin{equation}\label{eq:convmean}
    \lim_{m \to \infty }\El{T,x; \xi^*}\left[\frac{D_m}{m} \right]= \El{T,x; \xi^*}Y.
\end{equation}
However, for any $j$ even, using \eqref{eq:limkj} and \eqref{eq:smp} together with Jensen's inequality
\begin{equation*}
    \begin{split}
      \left |  \El{T,x; \xi^*}\left[\frac{D_{j}}{k_{j}} \right] - v(q) \right| & = \left |\El{T,x; \xi^*}\left[\frac{D_{k_j} - D_{k_{j-1}}}{k_{j}} \right] -v(q)  + \El{T,x; \xi^*}\left[\frac{D_{k_{j-1}}}{k_{j}} \right] \right| \\ 
      & \le \left | \El{T,x; \xi^*}\left[\El{T^*_{k_{j-1}},X^*_{k_{j-1}}; q}\left[\frac{D_{k_j - k_{j-1}}}{k_j - k_{j-1}} \right]\frac{k_j - k_{j-1}}{k_j}\right] - v(q) \right |\\
      & \quad + \frac{k_{j-1}}{k_j} \\
      & \le \frac{1}{2j} + \frac{2k_{j-1}}{k_j}.
    \end{split}
\end{equation*}
Arguing similarly for $j$ odd, we conclude that the sequence of real numbers given by $(\El{T,x; \xi^*}\left[\frac{D_m}{m} \right])_m$ has two subsequences converging to two different values: $v(p)$ and $v(q)$, which contradicts \eqref{eq:convmean}. This contradiction shows that $D_m/m$ cannot converge almost surely.

\section{Final Comments}\label{s:finalc}
We conclude this work discussing about some interesting problems for future research. These problems illustrate how versatile the model is and that there is a lot of different perspectives one might want to explore.

\subsubsection*{Speed in the Bernoulli case.} As we have said before, in personal communication, Y. Peres suggested the investigation of the limit speed function in the Bernoulli case. This instance already leads to interesting questions involving analytical properties of $v = v(p)$, for example: (1) {\it is $v$ differentiable?} (2) {\it is $v$ increasing in $p$?}

After a constructive discussion with Professor F. Rassoul-Agha about $v$, who kindly shared the graph of his simulations, we were inspired to conduct our own simulation. The simulations suggest that indeed $v$ is strictly increasing on $p$ and that it is concave up with a possibly zero derivative at $p=0$, see Figure \ref{fig:plot} below.
\begin{figure}[h!]
    \centering
    \includegraphics[width=0.8\linewidth]{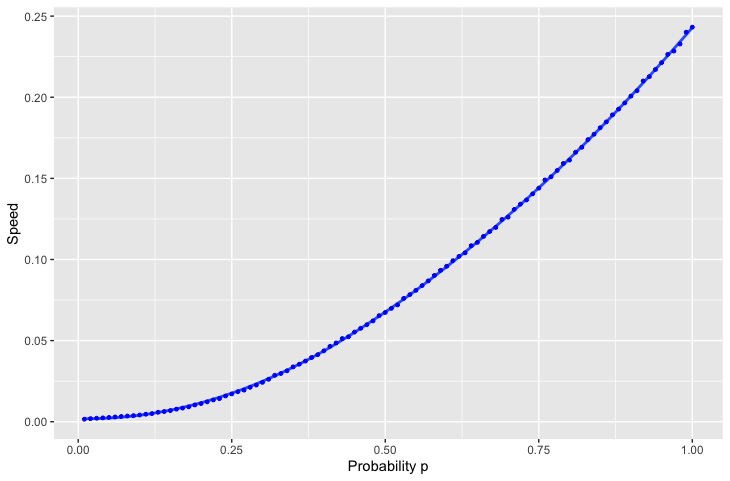}
    \caption{$1000$ iterations of $10000$ steps of the TBRW for different values of $p$.}
    \label{fig:plot}
\end{figure}

The code is available at \url{https://github.com/rbribeiro/tbrw} as an $R$ package.

\subsubsection*{Transient but not ballistic regimes.} If one desires to observe non-ballistic regimes, then condition \eqref{def:UE} must be dropped. One way to drop it is considering leaf process such that
$$
\lim_{n \to \infty }P(\xi_n \ge 1) = 0.
$$
That is, considering instances of the TBRW whose probability of adding one leaf goes to zero as time goes to infinity. In \cite{EIR21} the authors showed that a TBRW, with independent leaf process given by $\xi_n \sim {\rm Ber}(n^{-\gamma)}$, is recurrent if $\gamma>1/2$. The conjecture is that the model undergoes a phase transition at $\gamma = 1/2$. That is, for $\gamma \in (0,1/2)$ the walker is transient. Notice that in this regime, the walker cannot be ballistic, since $\dist{X_n}{o} \le \sum_{i=1}^n \xi_i+|T_0|$ almost surely, which is of order $n^{1-\gamma}$.

The regime below $1/2$ appears to be a significantly more challenging problem. When $\gamma > 1/2$, the recurrence is connected to the mixing properties of the environment. In this higher regime, the probability of adding a new vertex decreases fast enough that the walker has time to mix before attaching a new vertex. However, this reasoning does not hold when $\gamma < 1/2$. Also, in the lower regime, the probability of adding a new vertex is not high enough for one to extend the arguments applied to the case when the probability is $p$. This indicates that novel approaches are needed for analyzing this regime.

\subsubsection*{Dropping independence on the leaf process.} As we discussed at Section \ref{s:noniid}, even under the hypothesis of independence of the leaf process, the TBRW might not have a well-defined limit speed. Thus, if one wants to drop independence, some sort of mixing condition must be imposed on the leaf process in order to guarantee a LLN. The question is then what sort of mixing condition must be imposed. Perhaps some condition similar, at least in spirit, to the one investigated by E. Guerra, G. Valle and E. Vares in \cite{GVV22} in the context of RWRE. Note that if the identically distributed hypothesis over the leaf process is dropped, it is not clear if there exists a renewal structure similar to the one we provide in Theorem \ref{t:renew}. 

\section*{Acknowledgments}
The author would like to thank Caio Alves (Oak Ridge National Lab) for his invaluable comments and suggestions, and also for his long-term friendship. The author also would like to thank Professor Firas Rassoul-Agha (Univ. of Utah) for suggesting and sharing the results of his simulations. The author also would like to thank Professor Roberto I. Oliveira (IMPA) for suggesting the generalization of the continuity to general distributions and his comments on the first versions of this manuscript and Professor Alejandro Ram\'irez (NYU Shanghai) for reading the first version of this manuscript.

\bibliographystyle{plain}
\bibliography{ref}

\end{document}